\newcounter{zlist}
\newenvironment{zlist}{\begin{list}{{\rm(\arabic{zlist})}}{
\usecounter{zlist}\leftmargin2.5em\labelwidth2em\labelsep0.5em
\topsep0.6ex\itemsep0.3ex plus0.2ex minus0.3ex
\parsep0.3ex plus0.2ex minus0.1ex}}{\end{list}}
\newcounter{blist}
\newenvironment{blist}{\begin{list}{{\rm(\alph{blist})}}{
\usecounter{blist}\leftmargin2.5em\labelwidth2em\labelsep0.5em
\topsep0.6ex \itemsep0.3ex plus0.2ex minus0.3ex
\parsep0.3ex plus0.2ex minus0.1ex}}{\end{list}}
\newcounter{rlist}
\newenvironment{rlist}{\begin{list}{{\rm(\roman{rlist})}}{
\usecounter{rlist}\leftmargin2.5em\labelwidth2em\labelsep0.5em
\topsep0.6ex\itemsep0.3ex plus0.2ex minus0.3ex
\parsep0.3ex plus0.2ex minus0.1ex}}{\end{list}}
\newtheorem{theorem}{Theorem}[section]
\newtheorem{lemma}[theorem]{Lemma}
\newtheorem{thm}[theorem]{}
\newtheorem{proposition}[theorem]{Proposition}
\newtheorem{definition}[theorem]{Definition}
\numberwithin{equation}{section}
\newcommand{\A}{{\mathbb{A}}}
\newcommand{\B}{{\mathbb{B}}}
\newcommand{\II}{{\mathbb{I}}}
\newcommand{\V}{{\mathbb{V}}}
\newcommand{\bC}{\mathbf{C}}
\newcommand{\bG}{\mathbf{G}}
\newcommand{\bH}{\mathbf{H}}
\newcommand{\bT}{\mathbf{T}}
\newcommand{\bV}{{\mathbb {V}}}
\newcommand{\oH}{{\overline{H}}}
\newcommand{\woH}{{\widehat{\overline{H}}}}
\newcommand{\oR}{{\overline{R}}}
\newcommand{\oF}{{\overline{F}}}
\newcommand{\wT}{{\widehat{T}}}
\newcommand{\wG}{{\widehat{G}}}
\newcommand{\ot}{\otimes}
\newcommand{\uH}{{\underline{H}}}
\newcommand{\lra}{\longrightarrow}
\newcommand{\ve}{\varepsilon}
\begin{document}

\title{Notes on bimonads and Hopf monads}
 \author{Bachuki Mesablishvili and Robert Wisbauer}

\begin{abstract}

For a generalisation of the classical theory of Hopf algebra over fields,
A. Brugui\`eres and A. Virelizier study opmonoidal monads
on monoidal categories (which they called {\em bimonads}).
In a recent joint paper with S. Lack the same authors define the notion
of a {\em pre-Hopf monad} by requiring only a special form of the fusion
operator to be invertible.
In previous papers it was observed by the present authors that
bimonads yield a special case
of an entwining of a pair of functors (on arbitrary
categories). The purpose of this note is to show that in this setting the
pre-Hopf monads are a special case of Galois entwinings. As a byproduct some
new properties are detected which make a (general) bimonad on a Cauchy complete category to a Hopf monad. In the final section applications to cartesian monoidal
categories are considered. 
\smallskip

Key Words: opmonoidal functors, bimonads, Hopf monads, Galois entwinings

AMS classification: 18A40, 16T15, 18C20.
\end{abstract}

\maketitle

\tableofcontents

\section*{Introduction}

The classical definitions of bialgebras and Hopf algebras over fields (or rings)
heavily depend on constructions based on the tensor product. This may have
been one of the reasons why first generalisations of this notions were formulated
for monoidal categories, or even autonomous monoidal categories when
the properties of finite dimensional Hopf algebras were in the focus.
This was also the starting point for the definitions of {\em Hopf monads}
by I. Moerdijk in \cite{Moer}.
 McCrudden \cite{McC} suggested to call these functors
{\em opmonoidal monads} and A. Brugui\`eres and A. Virelizier just called
them {\em bimonads} in \cite[Section 2.3]{BLV}.

To be more precise, such a bimonad on a monoidal category
$(\bV, \otimes, \II)$ is a monad $\bT=(T,m,e)$ on $\bV$ endowed with natural
transformations $\chi: T\ot \to T\ot T$ and a morphism $\theta: T(\II)\to \II$
subject to certain (compatibility) conditions. These allow to define
left and right {\em fusion operators} by
$$\begin{array}{l}
H^l_{V,W}: (T(V)\ot m_W) \chi_{V, T(W)} : T(V\ot T(W))\lra T(V)\ot T(W), \\[+1mm]
H^r_{V,W}: (m_V \ot T(W)) \chi_{T(V), W}: T(T(V)\ot W)\lra T(V)\ot T(W).
\end{array}
$$

As a general form of the {\em Fundamental Theorem}
for Hopf algebras it is described in \cite[Theorem 4.6]{BV}
under which conditions the opmonoidal monads induce an equivalence
between the base (autonomous monoidal) category and the category of related
bimodules.

It was observed in \cite{MW1} (see also \cite{BBW}) that the notions around
Hopf algebras can be formulated for any category $\A$ without referring to
tensor products. For a {\em bimonad} on $\A$ one requires simply a monad
and a comonad structure whose compatibility is essentially
expressed by {\em distributive laws} (e.g. \cite[Definition 4.1]{MW1}).

As pointed out in \cite[Section 2.2]{MW1}, the opmonoidal monads yield special
cases of the entwining of a monad with a comonad on any category:
Hereby the monad $\bT$ is entwined with the comonad $-\ot T(\II)$.
 In \cite[Theorem 5.11]{MW} the above mentioned \cite[Theorem 4.6]{BV} is
formulated in terms of entwining functors.

In \cite{BLV} an opmonoidal monad (bimonad) is called a {\em Hopf monad}
provided the left and right fusion operators are isomorphisms and it is called a
{\em left ({\em resp.} right) pre-Hopf monad} if, for any $V\in \bV$, the morphisms $H^l_{\II,V}$ (resp. $H^r_{V,\II}$) is invertible.

In this paper we show that the right pre-Hopf monads $\bT$ are just
those for which the related entwining is $\bG_{\emph{T}(\II)}$-Galois
in the sense of \cite[3.13]{MW1}. This leads to an improved version of
\cite[Theorem 6.11]{BLV} which describes when a pre-Hopf monad on $\bV$
induces an equivalence between $\bV$ and the category of left Hopf $\bT$-modules
(see Theorem \ref{th.1}).

In Section 1 we recall some basic notions and can use \cite[Lemma 2.19]{BLV}
to improve some of our own results on Galois entwinings (see Theorem \ref{T.1}).

This is applied in Section 2 to find new properties of a bimonad
in the sense of \cite{MW1} to make it a Hopf monad, provided the base category is Cauchy complete.

In Section 3 opmonoidal monads $T$ on $(\bV,\ot,\II)$ are investigated.
In this case $T(\II)$ is a comonoid in $\bV$ and we have an entwining between
$\bT$ and $-\ot T(\II)$.
As mentioned above, the main result in this section is Theorem \ref{T.1}
which tells us when pre-Hopf monads induce an equivalence between $\bV$ and
$\bV^{G_{\emph{T}(\II})}_\emph{T}$.
We also observe (in \ref{BV-mod})  that for any $\bV$-comonoid
$\bC=(C,\delta,\ve)$, $T(C)$ also allows for a $\bV$-comonoid structure
provided $\bC$ allows for a grouplike morphism $g:\II\to C$.
In this case we get functors from $\bV$ to $\bV^{G_{\emph{T}(\bC})}_\emph{T}$ and
the question arises under which conditions these induce an equivalence.
It is shown in Theorem \ref{th.2} that this is only the case if
$g:\II\to C$ is an (comonad) isomorphism.

In the final section we consider applications to cartesian monoidal categories and provide examples of pre-Hopf functors for which the related comparison functor is not an equivalence.

\section{Preliminaries}

For a monad $\bT=(T, m, e)$ on a category
$\A$, we write $\A_T$ for the Eilenberg-Moore category of $\bT$-modules and write $$\eta_{T}, \varepsilon_{T}: \phi_{T}
\dashv U_{T} : \A_{T} \to \A$$ for the corresponding
forgetful-free adjunction. Dually, if ${\bG} =(G, \delta,
\varepsilon)$ is a comonad on $\A$,  we denote by $\A^G$ the Eilenberg-Moore category of $\bG$-comodules and by
$$\eta^{G} , \varepsilon^{G} : U^{G} \dashv \phi^{G} : \A \to \A^{G}$$
the corresponding forgetful-cofree adjunction.

For convenience we recall some notions and results from \cite[Section 3]{MW}.

\begin{thm}\label{M.F}{\bf Module functors.} \em
Given a monad $\bT=(T,m,e)$ on $\A$ and any functor $L : \A \to \B$, we say that $L$ is a
({\em left}) {\em$\mathbf{T}$-module} if there exists a natural transformation
$\alpha_L : TL \to L$ such that the diagrams
$$
\xymatrix{
L \ar@{=}[dr] \ar[r]^-{eL}&TL \ar[d]^-{\alpha_L}\\
& L ,} \qquad
\xymatrix{
TTL \ar[r]^-{mL} \ar[d]_-{T\alpha_L }& TL \ar[d]^-{\alpha_L}\\
TL \ar[r]_-{\alpha_L}& L}
$$ commute.

It is shown in \cite[Proposition II.1.1]{D} that a left $\bT$-module structure on $R$ is equivalent to the existence of a functor
$\overline{R} : \B \to \A_T$ inducing a commutative diagram

$$\xymatrix{ \B \ar[r]^\oR \ar[dr]_R & \A_T \ar[d]^{U_T} \\
           & \A.}$$

It is also shown in \cite {D} that,
 For any $\bT$-module
$(R: \B \to\A,\alpha)$ admitting a left adjoint functor $F : \A \to \B$, the
composite
$$ t_{\oR}: \xymatrix{T \ar[r]^-{T \eta}& TRF \ar[r]^-{\alpha F} & RF},$$
where $\eta : 1 \to RF$ is the unit of the adjunction $F \dashv R$,
is a monad morphism from $\bT$ to the monad on $\A$
generated by the adjunction $F \dashv R$.
\end{thm}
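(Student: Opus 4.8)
The plan is to read the final assertion as a monad-morphism verification and to check the two defining axioms directly. Write $\mathbf{S}=(RF,\,R\ve F,\,\eta)$ for the monad on $\A$ generated by the adjunction $F\dashv R$, where $\ve:FR\to 1$ is the counit; its unit is $\eta$ and its multiplication is $R\ve F$. Abbreviating $h:=t_{\oR}=\alpha F\circ T\eta$, I must show that $h:\bT\to\mathbf{S}$ satisfies the unit condition $h\circ e=\eta$ and the multiplication condition $h\circ m=R\ve F\circ(h\ast h)$, where $h\ast h=hRF\circ Th$ is the horizontal composite. Throughout I would use the two module axioms for $(R,\alpha)$, namely $\alpha\circ eR=\mathrm{id}_R$ and $\alpha\circ mR=\alpha\circ T\alpha$, read off from the two commuting squares above with $L=R$.

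For the unit condition, first I would slide $e$ through $T\eta$ by interchange (naturality of $e:1\to T$ at the components of $\eta$), giving $T\eta\circ e=eRF\circ\eta$. Hence $h\circ e=\alpha F\circ eRF\circ\eta=(\alpha\circ eR)F\circ\eta$, and the module unit axiom collapses this to $\eta$. This step is routine.

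The multiplication condition is the main obstacle and is a longer diagram chase. On the left, I would use interchange of $m$ with $\eta$ (naturality of $m:TT\to T$) to obtain $T\eta\circ m=mRF\circ TT\eta$, so that $h\circ m=(\alpha\circ mR)F\circ TT\eta=(\alpha\circ T\alpha)F\circ TT\eta=\alpha F\circ T\alpha F\circ TT\eta$, the last two equalities using module associativity. On the right, expanding $h\ast h=hRF\circ Th=\alpha FRF\circ T\eta RF\circ T\alpha F\circ TT\eta$ and factoring out the common tail $T\alpha F\circ TT\eta$, the whole identity reduces to the single equation $R\ve F\circ\alpha FRF\circ T\eta RF=\alpha F$ of natural transformations $TRF\to RF$. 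To prove this I would apply naturality of $\alpha:TR\to R$ at the counit $\ve$, which yields $R\ve F\circ\alpha FRF=\alpha F\circ T(R\ve F)$; substituting and pulling $T$ outside turns the left-hand side into $\alpha F\circ T\big((R\ve\circ\eta R)F\big)$, and the triangle identity $R\ve\circ\eta R=\mathrm{id}_R$ finishes it.

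Finally I would assemble the two verified axioms to conclude that $t_{\oR}$ is a morphism of monads from $\bT$ to the monad generated by $F\dashv R$. The only real care needed is the bookkeeping in the multiplication step: tracking which functor each natural transformation is whiskered by, and applying the interchange laws and the naturality squares of $e$, $m$ and $\alpha$ in the correct order. Once the problem is reduced to the displayed equation $R\ve F\circ\alpha FRF\circ T\eta RF=\alpha F$, the triangle identity does the rest.
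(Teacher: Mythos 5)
Your verification is correct. The paper offers no proof of this statement at all --- it is quoted from Dubuc (\cite[Proposition II.1.1]{D} and the surrounding material) --- so there is no in-text argument to compare against; your direct check is a legitimate, self-contained substitute. Concretely: the unit axiom follows, as you say, from naturality of $e$ at $\eta_a$ together with the module unit law $\alpha\circ eR=\mathrm{id}_R$; and your reduction of the multiplication axiom to the single identity $R\ve F\circ\alpha FRF\circ T\eta RF=\alpha F$ (by precomposing both sides with the common tail $T\alpha F\circ TT\eta$, after invoking naturality of $m$ and module associativity) is valid, with that identity then following from naturality of $\alpha$ at $\ve_{Fa}$ and the triangle identity $R\ve\circ\eta R=\mathrm{id}_R$. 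This is exactly the standard "mate under the adjunction $F\dashv R$" computation, and it is complete.
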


\begin{thm}\label{D.1.1}{\bf Definition.} \emph{(}\cite[2.19]{BBW}\emph{)} \em
A left $\bT$-module $R: \B \to \A$ with a left
adjoint $F: \A \to \B$ is said to be $\bT$-\emph{Galois} if the
corresponding morphism $t_{\overline{R}}: T \to RF$ of monads on
$\A$ is an isomorphism.
\end{thm}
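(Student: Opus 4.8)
The final statement is \emph{Definition}~\ref{D.1.1}: it asserts nothing, but merely fixes the terminology ``$\bT$-Galois'' for a left $\bT$-module $R : \B \to \A$ with left adjoint $F$ whose associated comparison morphism $t_{\overline{R}} : T \to RF$ is an isomorphism. Since a definition carries no mathematical claim, there is nothing to establish, and accordingly I would supply no proof. The only ``plan'' worth stating is to confirm that every object named in the definition has already been made available, so that the terminology is legitimate and unambiguous.

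There is exactly one point at which a definition of this shape could fail to be well posed: the phrase ``morphism $t_{\overline{R}} : T \to RF$ of monads'' presupposes both that $RF$ carries a monad structure and that $t_{\overline{R}}$ is compatible with the units and multiplications, so that requiring it to be an isomorphism is an assertion internal to the category of monads on $\A$. I would \emph{not} re-derive this. It is precisely the content recorded in~\ref{M.F}, which, following \cite{D}, states that for any $\bT$-module $(R,\alpha)$ admitting a left adjoint $F$, the composite $t_{\overline{R}} = \alpha F \circ T\eta$ is a monad morphism from $\bT$ to the monad $RF$ generated by $F \dashv R$. With that result granted, the source and target of $t_{\overline{R}}$ are genuinely monads and $t_{\overline{R}}$ genuinely lives between them, so the word ``isomorphism'' has its evident meaning.

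Consequently the only ``obstacle'' here is a non-obstacle: Definition~\ref{D.1.1} is self-contained once the cited module-functor material of~\ref{M.F} is in hand, and there is no proposition to argue. I would therefore record the definition as stated and reserve any actual proof for the later results (for instance Theorem~\ref{T.1}) that put this notion to work.
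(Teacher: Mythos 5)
Your reading is correct: the statement is a definition, the paper supplies no proof for it, and its well-posedness rests exactly where you place it, namely on \ref{M.F} (following \cite{D}), which establishes that $t_{\overline{R}}$ is a monad morphism from $\bT$ to the monad generated by $F \dashv R$. This matches the paper's own treatment, so nothing further is needed.
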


Expressing the dual of \cite[Theorem 4.4]{M} in the present situation gives:

\begin{proposition}\label{P.1.1}
The functor $\overline{R}$ is an equivalence of categories if and
only if the functor $R$ is $\bT$-\emph{Galois} and monadic.
\end{proposition}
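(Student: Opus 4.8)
The plan is to factor $\oR$ through the Eilenberg--Moore comparison functor of the adjunction $F\dashv R$ and to read both implications off that factorization. Let $RF$ denote the monad on $\A$ generated by $F\dashv R$, with unit $\eta:1\to RF$ and counit $\varepsilon:FR\to 1$, let $U_{RF}:\A_{RF}\to\A$ be its forgetful functor, and let $K:\B\to\A_{RF}$, $B\mapsto(RB,R\varepsilon_B)$, be the comparison functor; by definition $R$ is monadic exactly when $K$ is an equivalence. The monad morphism $t_{\oR}:T\to RF$ of Theorem \ref{M.F} induces a restriction-of-scalars functor $\A_{t_{\oR}}:\A_{RF}\to\A_T$, and the first thing I would establish is the strict identity
$$\oR=\A_{t_{\oR}}\circ K.$$
This is a direct check: restricting $K(B)=(RB,R\varepsilon_B)$ along $t_{\oR}$ gives the $\bT$-action $R\varepsilon_B\circ\alpha_{FRB}\circ T\eta_{RB}$, where $\alpha:TR\to R$ is the module structure of $R$; naturality of $\alpha$ at $\varepsilon_B:FRB\to B$ rewrites this as $\alpha_B\circ T(R\varepsilon_B\circ\eta_{RB})$, and the triangle identity $R\varepsilon_B\circ\eta_{RB}=\mathrm{id}$ collapses it to $\alpha_B$, which is precisely the action of $\oR(B)$.

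The sufficiency direction is then immediate. If $R$ is $\bT$-Galois, then $t_{\oR}$ is an isomorphism, so $\A_{t_{\oR}}$ is an isomorphism of categories; if $R$ is also monadic, then $K$ is an equivalence; hence the composite $\oR=\A_{t_{\oR}}\circ K$ is an equivalence.

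For necessity, assume $\oR$ is an equivalence. Monadicity of $R$ comes for free: $U_T$ is monadic and $U_T\oR=R$, and a monadic functor precomposed with an equivalence is again monadic (via Beck's theorem), so $R$ is monadic and $K$ is an equivalence. Feeding this back into $\oR=\A_{t_{\oR}}\circ K$ and using the two-out-of-three property of equivalences, I conclude that $\A_{t_{\oR}}=\oR\circ K^{-1}$ is an equivalence as well.

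The hard part is the remaining promotion: from the fact that the restriction functor $\A_{t_{\oR}}$ is an \emph{equivalence} one must still deduce that the monad morphism $t_{\oR}$ is itself an \emph{isomorphism}, which is what the Galois condition of Definition \ref{D.1.1} demands. My approach would be to compare left adjoints: since $U_T\circ\A_{t_{\oR}}=U_{RF}$ and both $U_T$ and $U_{RF}$ are monadic, transporting the free functor for $T$ across the equivalence $\A_{t_{\oR}}$ identifies it, by essential uniqueness of left adjoints, with the free functor for $RF$; applying $U_T$ turns this natural isomorphism into a monad isomorphism $T\cong RF$. The delicate and genuinely technical step is to check that this abstractly produced isomorphism coincides with $t_{\oR}$ --- this is exactly where the characterization of $t_{\oR}$ in Theorem \ref{M.F} as the canonical module-compatible monad morphism is used --- after which $t_{\oR}$ is invertible and $R$ is $\bT$-Galois. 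Alternatively, the entire statement can be obtained at one stroke by dualizing \cite[Theorem 4.4]{M} into the present monad-theoretic setting, which is the route the surrounding text signals.
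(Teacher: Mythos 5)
Your argument is sound, but it is not the paper's argument, because the paper gives no proof at all here: Proposition \ref{P.1.1} is simply declared to be the dual of \cite[Theorem 4.4]{M} ``expressed in the present situation,'' just as Proposition \ref{P.1.2} is obtained by rephrasing \cite[Theorem 2.7]{GoTo}. What you have supplied is, in effect, the standard proof of that cited result. Your factorization $\oR=\A_{t_{\oR}}\circ K$ through the Eilenberg--Moore comparison functor of $F\dashv R$ followed by restriction of scalars along $t_{\oR}$ is verified correctly (the computation via naturality of $\alpha$ and the triangle identity is exactly what is needed), and both implications do read off from it as you say. The one step you flag as delicate --- passing from ``$\A_{t_{\oR}}$ is an equivalence'' to ``$t_{\oR}$ is an isomorphism'' --- is a standard lemma and can be closed more cleanly than by matching your abstractly produced isomorphism $T\cong RF$ against $t_{\oR}$: the Eilenberg--Moore construction is fully faithful from monads on $\A$ (morphisms reversed) to categories over $\A$, i.e.\ a functor $Q:\A_S\to\A_T$ with $U_TQ=U_S$ corresponds to a unique monad morphism $T\to S$, recovered as the composite $T\xrightarrow{\;Te^S\;}TU_S\phi_S=TU_TQ\phi_S\longrightarrow U_TQ\phi_S=S$ using the $\bT$-algebra structure of $Q\phi_S$; applying this to a quasi-inverse of $\A_{t_{\oR}}$ (one must note the minor point that the quasi-inverse can be arranged to lie over $\A$) yields a two-sided inverse of $t_{\oR}$. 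So your route is a legitimate, self-contained replacement for the paper's citation, at the cost of having to prove this one lemma honestly; what the paper's approach buys is precisely not having to do that, since the cited theorem packages the whole equivalence, including the Galois condition, in one statement.
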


\begin{thm}\label{Cm.F}{\bf Comodule functors.} \em
Given a comonad $\bG=(G,\delta,\varepsilon)$ on $\A$, a functor $K: \B \to
\A$ is a {\em left $\mathbf{G}$-comodule functor} if there exists a
natural transformation $\beta_K : K \to GK$ inducing commutativity of the diagrams
$$
\xymatrix{
K \ar@{=}[dr] \ar[r]^-{\beta_K}& GK \ar[d]^-{\varepsilon K}\\
& K ,} \qquad
\xymatrix{
K \ar[r]^-{\beta_K} \ar[d]_-{\beta_K}& GK \ar[d]^-{\delta K}\\
GK \ar[r]_-{G \beta_K}& GGK.}
$$

A left $\bG$-comodule structure on $F : \B \to \A$ is equivalent to
the existence of a functor
$\overline{F} : \B \to \A^G$ (dual to \cite[Proposition II.1.1]{D})
leading to a commutative diagram
$$\xymatrix{ \B \ar[r]^\oF \ar[dr]_F & \A^G \ar[d]^{U^G}\\
  & \A .}$$

  If a $\bG$-comodule $(F,\beta)$
 admits a right adjoint $R: \A \to \B$, with
counit $\sigma : FR \to 1$, then there is a comonad morphism
$$\xymatrix{t_{\overline{F}}:FR \ar[r]^-{\beta R}& GFR
\ar[r]^-{G \sigma }  & G}$$
from the comonad generated by the adjunction $F \dashv R$ to the comonad $\bG$.
\end{thm}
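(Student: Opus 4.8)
The plan is to verify directly that $t_{\oF} = (G\sigma)(\beta R)$ satisfies the two axioms of a morphism of comonads from the comonad $(FR,\, F\eta R,\, \sigma)$ generated by $F\dashv R$ to $\bG=(G,\delta,\ve)$, where $\eta : 1_{\B}\to RF$ denotes the unit of the adjunction and $F\eta R : FR \to FRFR$ is the comultiplication of $FR$. This assertion is the formal dual of the last statement in \ref{M.F} (the module-functor result taken from \cite{D}), so one could equally well pass to the opposite categories; I prefer to record the two checks, since each is short.

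For the counit axiom I must show $\ve\circ t_{\oF}=\sigma$. First I would push $\ve$ past $G\sigma$ by naturality of $\ve : G\to 1$, which rewrites $\ve\circ G\sigma$ as $\sigma\circ\ve FR$. The remaining factor $\ve FR\circ\beta R$ equals $(\ve F\circ\beta)R$, and the counit axiom of the $\bG$-comodule structure, $\ve F\circ\beta=1_F$, collapses it to $1_{FR}$. Hence $\ve\circ t_{\oF}=\sigma$, as needed.

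The comultiplication axiom is the substantive step: I must show $\delta\circ t_{\oF}=(t_{\oF}\ast t_{\oF})\circ F\eta R$, where the horizontal composite equals $(G t_{\oF})(t_{\oF}FR)$ by interchange. For the left-hand side I would move $\delta$ past $G\sigma$ by naturality of $\delta : G\to GG$, obtaining $GG\sigma\circ\delta FR\circ\beta R$, and then apply the coassociativity axiom of the comodule, $\delta F\circ\beta=G\beta\circ\beta$ (whiskered by $R$), to replace $\delta FR\circ\beta R$ by $G\beta R\circ\beta R$; recognising $GG\sigma\circ G\beta R=G t_{\oF}$ leaves the left-hand side as $G t_{\oF}\circ\beta R$. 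It then suffices to prove the single identity
$$\beta R = (t_{\oF}FR)\circ F\eta R .$$
Expanding $t_{\oF}FR = G\sigma FR\circ\beta RFR$ and using naturality of $\beta$ against the $2$-cell $\eta R : R\to RFR$ turns $\beta RFR\circ F\eta R$ into $GF\eta R\circ\beta R$; the surviving factor $\sigma FR\circ F\eta R$ is precisely the triangle identity $(\sigma F\circ F\eta)R=1_{FR}$, so the expression reduces to $\beta R$. Composing with $G t_{\oF}$ on the left then matches the two sides.

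The only real obstacle is bookkeeping: keeping the whiskerings and the order of the horizontal composites straight, and invoking the correct triangle identity ($\sigma F\circ F\eta=1_F$ rather than its partner $R\sigma\circ\eta R=1_R$). Once the identity $\beta R=(t_{\oF}FR)\circ F\eta R$ is established, both comonad axioms follow mechanically, and no further structure of $\A$ or $\B$ is required.
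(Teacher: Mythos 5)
Your verification is correct and is exactly the standard argument: the counit check via naturality of $\ve$ and the comodule counit axiom, and the comultiplication check reduced to the identity $\beta R=(t_{\oF}FR)\circ F\eta R$ via naturality of $\beta$ and the triangle identity $\sigma F\circ F\eta=1_F$. The paper supplies no proof at all here, simply citing the dual of the module-functor statement from Dubuc, so your computation is the expected fleshing-out of that citation rather than a different route.
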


\begin{thm}{\bf Definition.}\label{def} \em (\cite[Definition 3.5]{MW1})
 A left $\bG$-comodule $F: \B \to \A$ with a right
adjoint $R: \A \to \B$ is said to be $\bG$-\emph{Galois} if the
corresponding morphism $t_{\overline{F}}: FR \to G$ of comonads on
$\A$ is an isomorphism.
\end{thm}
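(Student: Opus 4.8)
The statement to establish — recorded in \ref{Cm.F} and implicitly used in Definition \ref{def} — is that the composite $t_{\overline{F}} = G\sigma \circ \beta R : FR \to G$ is genuinely a \emph{morphism of comonads}, from the comonad generated by $F \dashv R$ to $\bG$. This is the exact formal dual of the monad-morphism assertion in \ref{M.F}, so I expect a symmetric diagram chase. Writing $\eta : 1 \to RF$ for the unit of the adjunction, the comonad generated on $\A$ has underlying endofunctor $FR$, counit $\sigma$, and comultiplication $F\eta R$. The only tools needed are the two left $\bG$-comodule axioms for $\beta$, namely $\varepsilon F \circ \beta = 1_F$ and $\delta F \circ \beta = G\beta \circ \beta$; naturality of $\delta$, $\varepsilon$, $\beta$ and $\sigma$; and the triangle identities of $F \dashv R$.

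First I would verify compatibility with the counits, that is $\varepsilon \circ t_{\overline{F}} = \sigma$. Naturality of $\varepsilon : G \to 1$ applied to $\sigma$ gives $\varepsilon \circ G\sigma = \sigma \circ \varepsilon FR$, whence $\varepsilon \circ t_{\overline{F}} = \sigma \circ (\varepsilon F \circ \beta) R$; the comodule counit axiom collapses the bracketed factor to $1_F$ and leaves exactly $\sigma$. This step costs only one naturality square and one comodule law.

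The substance is the comultiplication axiom, $\delta \circ t_{\overline{F}} = (G t_{\overline{F}} \circ t_{\overline{F}} FR) \circ F\eta R$. On the left I would move $\delta$ past $G\sigma$ by naturality and then apply coassociativity $\delta F \circ \beta = G\beta \circ \beta$ (whiskered by $R$) to obtain $\delta \circ t_{\overline{F}} = GG\sigma \circ G\beta R \circ \beta R$. For the right-hand side I would expand both copies of $t_{\overline{F}}$, reducing everything to the single identity $t_{\overline{F}} FR \circ F\eta R = \beta R$: here naturality of $\beta$ rewrites $\beta RFR \circ F\eta R$ as $GF\eta R \circ \beta R$, and then the triangle identity $\sigma F \circ F\eta = 1_F$, whiskered by $R$, cancels the surviving $\sigma$--$\eta$ pair and leaves exactly $\beta R$. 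Both sides thus reduce to $GG\sigma \circ G\beta R \circ \beta R$, establishing the axiom.

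I expect the main obstacle to be purely organisational: the whiskering bookkeeping, i.e.\ keeping straight which functor is applied on the left and which on the right at each stage, and in particular locating the single spot where a triangle identity may be inserted to collapse the unit--counit pair. Once that insertion point is identified the two computations meet without difficulty, and $t_{\overline{F}}$ is confirmed to be a comonad morphism, so that the $\bG$-Galois condition of Definition \ref{def} is well posed.
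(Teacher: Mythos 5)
This item is a \emph{definition}, so the paper offers no proof of it; the only substantive point is that $t_{\overline{F}}=G\sigma\circ\beta R$ really is a comonad morphism, which the paper simply imports in \ref{Cm.F} as the dual of Dubuc's result on $t_{\overline{R}}$. Your verification of exactly that fact is correct and is the standard argument: the counit check via naturality of $\varepsilon$ and the comodule counit law, and the comultiplication check via naturality of $\delta$, coassociativity of $\beta$, and the reduction of $t_{\overline{F}}FR\circ F\eta R$ to $\beta R$ using naturality of $\beta$ and the triangle identity $\sigma F\circ F\eta=1_F$.
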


  Now \cite[Theorem 2.7]{GoTo} (also \cite[Theorem 4.4]{M}) can be rephrased as follows:

\begin{proposition}\label{P.1.2} The functor $\overline{F}$ is an equivalence of categories
if and only if the functor $F$ is $\bG$-Galois and comonadic.
\end{proposition}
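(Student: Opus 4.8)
The plan is to deduce the statement from its monadic counterpart, Proposition \ref{P.1.1}, by formal duality, and — for transparency — to indicate the direct argument that produces it. The situation of \ref{Cm.F} is exactly the opposite of that of \ref{M.F}--\ref{P.1.1}: passing to $\A^{\mathrm{op}}$ turns the comonad $\bG$ into a monad $\bG^{\mathrm{op}}$, identifies $\A^{G}$ with $(\A^{\mathrm{op}})_{G^{\mathrm{op}}}$, turns the left $\bG$-comodule $F:\B\to\A$ into a left $\bG^{\mathrm{op}}$-module $F^{\mathrm{op}}:\B^{\mathrm{op}}\to\A^{\mathrm{op}}$, and turns its right adjoint $R$ into a left adjoint $R^{\mathrm{op}}$. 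Under this correspondence $\overline{F}$ becomes the functor $\overline{F^{\mathrm{op}}}$ of \ref{M.F}, the comonad morphism $t_{\overline{F}}$ becomes the monad morphism $t_{\overline{F^{\mathrm{op}}}}$ (so $\bG$-Galois becomes $\bG^{\mathrm{op}}$-Galois), and $F$ comonadic becomes $F^{\mathrm{op}}$ monadic. Since $\overline{F}$ is an equivalence iff $\overline{F^{\mathrm{op}}}$ is, the assertion is precisely \ref{P.1.1} applied in $\A^{\mathrm{op}}$.

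For the direct argument, write $\eta:1\to RF$ and $\sigma:FR\to 1$ for the unit and counit of $F\dashv R$ and $\beta:F\to GF$ for the comodule structure, so that $t:=t_{\overline{F}}=G\sigma\cdot\beta R:FR\to G$ by \ref{Cm.F}. Let $K_{FR}:\B\to\A^{FR}$, $B\mapsto(FB,F\eta_B)$, be the comparison functor of $F\dashv R$ into the coalgebras of the comonad $FR$ it generates, and let $\A^{t}:\A^{FR}\to\A^{G}$, $(A,\rho)\mapsto(A,t_A\cdot\rho)$, be the functor induced by the comonad morphism $t$; both commute with the forgetful functors to $\A$. A short check using naturality of $\beta$ and the triangle identity $\sigma F\cdot F\eta=1$ gives $t_{FB}\cdot F\eta_B=\beta_B$, hence
$$\A^{t}\,K_{FR}(B)=(FB,\;t_{FB}\cdot F\eta_B)=(FB,\beta_B)=\overline{F}(B),$$
so that $\overline{F}=\A^{t}\,K_{FR}$.

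Now $K_{FR}$ is an equivalence iff $F$ is comonadic (by definition), and $\A^{t}$ is an equivalence iff $t$ is invertible, i.e. iff $F$ is $\bG$-Galois (Definition \ref{def}); combining these proves the ``if'' direction. For ``only if'', assume $\overline{F}$ is an equivalence. Then $F=U^{G}\overline{F}$ is the composite of the comonadic functor $U^{G}$ with an equivalence and is therefore comonadic, so $K_{FR}$ is an equivalence; from $\overline{F}=\A^{t}K_{FR}$ and the two-out-of-three property of equivalences, $\A^{t}$ is an equivalence as well.

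The main obstacle is the remaining implication: a comonad morphism $t:FR\to G$ whose induced functor $\A^{t}$ is an equivalence must already be an isomorphism. The mechanism is as follows. Since $U^{G}\A^{t}=U^{FR}$ and $\A^{t}$ is an equivalence, its quasi-inverse is right adjoint to it, so comparing the right adjoints of the two comonadic forgetful functors yields a natural isomorphism $\A^{t}\phi^{FR}\cong\phi^{G}$ of cofree functors; applying $U^{G}$ and matching the induced counits produces a comonad isomorphism $FR\cong G$, which by the universal property of cofree coalgebras is forced to be $t$. This last identification is the delicate point, and is exactly the content of \cite[Theorem 4.4]{M} (dually, of \cite[Theorem 2.7]{GoTo}); the duality argument above lets us borrow it directly from \ref{P.1.1} rather than reprove it.
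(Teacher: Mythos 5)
Your proposal is correct. Note, however, that the paper does not actually prove Proposition \ref{P.1.2}: it presents the statement as a direct rephrasing of \cite[Theorem 2.7]{GoTo} (equivalently, of \cite[Theorem 4.4]{M}), just as Proposition \ref{P.1.1} is presented as the dual of \cite[Theorem 4.4]{M}. Your first paragraph therefore reproduces exactly the paper's (implicit) reasoning -- both propositions are the two faces, under $\A\mapsto\A^{\mathrm{op}}$, of one cited theorem -- while your second and third paragraphs supply a genuine argument that the paper omits. That argument is sound: the factorisation $\overline{F}=\A^{t}\,K_{FR}$ is verified correctly (the computation $t_{FB}\cdot F\eta_B=\beta_B$ via naturality of $\beta$ and the triangle identity is right), the ``if'' direction follows immediately, and in the ``only if'' direction the deduction that $F=U^{G}\overline{F}$ is comonadic (comonadicity being stable under precomposition with an equivalence) and the two-out-of-three step are both fine. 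You are also right to isolate the one nontrivial remaining point -- that a comonad morphism $t:FR\to G$ over $\A$ inducing an equivalence on comodule categories must itself be invertible -- and honest in deferring it to the cited theorem rather than claiming it is formal; the mechanism you sketch (matching the cofree right adjoints of $U^{FR}$ and $U^{G}$ and identifying the resulting comonad isomorphism with $t$ via the universal property of cofree comodules) is indeed how that step is carried out in \cite{GoTo} and \cite{M}. In short: where the paper settles for a citation, you give the skeleton of a self-contained proof, at the cost of still leaning on the reference for the rigidity statement about $t$.
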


Recall \cite[Definition 1.19]{MW}:

\begin{thm}{\bf Definitions.}\label{def-galois} \em
Let $\bT=(T, m,e)$ be a monad and $\bG =(G,\delta,\varepsilon)$
a comonad on $\A$.
We say that $\bG$ is {\em $\bT$-Galois}, if there exists a left
$\bT$-module structure $\alpha: TG \to G$ on the functor $G$
such that the composite
$$\xymatrix{\gamma^G : TG \ar[r]^-{T\delta}& TGG \ar[r]^-{\alpha G}& GG}$$
is an isomorphism.

Dually, $\bT$ is {\em $\bG$-Galois}, if there is a left
$\bG$-comodule structure $\beta: T \to GT$ on the functor
$T$ such that the composite 
$$\xymatrix{\gamma_T : TT \ar[r]^-{\beta T }& GTT \ar[r]^-{G m}& GT}$$ 
is an isomorphism.
\end{thm}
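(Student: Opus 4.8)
The final block, \ref{def-galois}, is not a proposition but a pair of definitions, so strictly there is nothing to prove; the plan is instead to record the two things a reader must check for the terminology to be well-posed, and to explain why a single verification suffices for both clauses. First I would confirm that each displayed composite is a genuine natural transformation. For $\gamma^G$ this needs only that $\alpha : TG \to G$ be a left $\bT$-module structure on the functor $G$ in the sense of \ref{M.F}; then $\alpha G : TGG \to GG$ is defined, and precomposing with $T\delta : TG \to TGG$ yields the natural transformation $\gamma^G : TG \to GG$. Dually, for $\gamma_T$ one uses that $\beta : T \to GT$ is a left $\bG$-comodule structure on $T$ in the sense of \ref{Cm.F}, so that $\beta T : TT \to GTT$ makes sense, and postcomposing with $Gm : GTT \to GT$ produces $\gamma_T : TT \to GT$. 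With these observations the clause ``is an isomorphism'' attaches to an honest morphism of functors, and the definition is meaningful.

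Second, I would note that the two clauses are interchanged by the standard $2$-categorical duality obtained by passing to the opposite category $\A^{\mathrm{op}}$, under which a monad becomes a comonad and vice versa: the monad $\bT$ passes to a comonad $T^{\mathrm{op}}$ with comultiplication $m^{\mathrm{op}}$, the comonad $\bG$ passes to a monad $G^{\mathrm{op}}$ with multiplication $\delta^{\mathrm{op}}$, and a left $\bT$-module structure $\alpha$ passes to a left $T^{\mathrm{op}}$-comodule structure $\alpha^{\mathrm{op}}$. Factoring each composite and dualising term by term, one checks that $(\gamma^G)^{\mathrm{op}}$ is precisely the composite defining $\gamma_{G^{\mathrm{op}}}$ for the dual pair. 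Hence the second clause is the image of the first under opposition, and no independent argument is required once the first is understood.

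Finally, it is worth flagging — though this is not part of \ref{def-galois} itself — the consistency one expects to exploit later: these two invertibility conditions are the ``entwined'' incarnations of the module- and comodule-functor Galois conditions of \ref{D.1.1} and \ref{def}. When $G$ carries its $\bT$-action, invertibility of $\gamma^G$ should reproduce the comonad-isomorphism condition on $t_{\oF}$ coming from \ref{Cm.F}, and dually invertibility of $\gamma_T$ should reproduce the monad-isomorphism condition on $t_{\oR}$ coming from \ref{M.F}. The only potential obstacle anywhere here is bookkeeping: matching each composite to the correct comparison morphism and keeping the module and comodule sides straight. That matching is routine and involves no substantive computation, which is exactly what one expects of a definition.
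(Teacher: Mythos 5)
You are right that \ref{def-galois} is a pair of definitions rather than a theorem: the paper recalls it verbatim from \cite[Definition 1.19]{MW} and supplies no proof, so there is nothing to compare beyond well-posedness. Your checks — that $\gamma^G$ and $\gamma_T$ are honest natural transformations given the module and comodule structures of \ref{M.F} and \ref{Cm.F}, and that the two clauses are exchanged by passing to $\A^{\mathrm{op}}$ (which swaps the monad $\bT$ and the comonad $\bG$) — are correct and consistent with how the paper treats and later uses the definition (e.g.\ in Proposition \ref{P.1.3}).
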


\begin{thm}\label{Ent}{\bf Entwinings.} \em
Recall (for example, from \cite{Wf}) that an \emph{entwining} or
\emph{mixed distributive law} from a monad $\bT=(T,m,e)$ to a comonad
$\bG=(G, \delta,\varepsilon)$ on a category $\A$ is a natural
transformation $\lambda : TG \to TG$ with commutative diagrams
$$
\xymatrix{ & G \ar[dl]_{\eta G} \ar[rd]^{G \eta}& & & T G
\ar[dl]_{\lambda} \ar[rd]^{T \varepsilon}&\\
T G \ar[rr]_{\lambda}&& G T \,,& G T \ar[rr]_{\varepsilon T} &&
T,}$$
$$\xymatrix{ T G \ar[d]_{\lambda} \ar[r]^{T\delta} & T GG
\ar[r]^{\lambda G} & G T G \ar[d]^{G \lambda} \\
G T \ar[rr]_{\delta T}&& G G T ,}
\quad
\xymatrix{
 TT G\ar[d]_{\mu G} \ar[r]^{T \lambda} & T G T \ar[r]^{\lambda T} & G TT
 \ar[d]^{G\mu}\\
 TG \ar[rr]_{\lambda} && G T.}$$

It is well-known (see \cite{Wf}) that the following structures are in bijective
correspondence:
\begin{itemize}
\item entwinings $\lambda : TG \to GT$;
\item comonads $\wG=(\wG, \widehat{\delta},\widehat{\varepsilon})$ on $\A_{\bT}$ that extend $\bG$ in the
sense that
\begin{center}
$U_T \wG= G U_T$, $U_T \widehat{\delta}=\delta
U_T$ and $U_T
\widehat{\varepsilon}=\varepsilon U_T$;
\end{center}
\item monads $\wT=(\wT,\widehat{m},\widehat{e})$ on $\A^{\bG}$ that extend $\bT$ in the sense that
\begin{center}
$U^G \wT=T U^G$, $U^G \widehat{m}=m U^G$ and $U^G
\widehat{e}=e U^G$.
\end{center}
\end{itemize}

For any entwining $\lambda : TG \to GT$, $(a, h_a) \in \A_\bT$
and  $(a, \theta_a) \in\A^{\bG}$ (e.g. \cite[Section 5]{W}),
$$\wG(a,h_a)=(G(a), G(h_a) \cdot \lambda_a),\quad \widehat{\delta}_{(a, h_a)}=\delta_a,\quad \widehat{\varepsilon}_{(a,h_a)}=\varepsilon_a ,$$
$$\wT(a, \theta_a)=(T(a),
\lambda_a \cdot T(\theta_a)), \quad \widehat{m}_{(a, \theta_a)}=m_a,\quad
\widehat{e}_{(a, \theta_a)}=e_a. $$

We write $\A^{G}_{T}(\lambda)$ (or just $\A^{G}_{T},$ when $\lambda$ is understood) for the category whose
objects are triples $(a, h_a, \theta_a)$, where $(a, h_a) \in
\A_{T}$ and $(a,\theta_a) \in \A^{G}$ with commuting diagram
$$
\xymatrix{
T(a) \ar[r]^-{h_a} \ar[d]_-{T(\theta_a)}& a \ar[r]^-{\theta_a}& G(a) \\
TG(a) \ar[rr]_-{\lambda_a}&& GT(a). \ar[u]_-{G(h_a)}}
$$

The assignments $(a, h_a, \theta_a) \to ((a, h_a), \theta_a))$ and
$((a, h_a), \theta_a))\to ((a,\theta_a), h_a) $ yield  isomorphisms of categories
$$\A^{G}_{T}(\lambda) \simeq (\A _{T})^{\widehat{G}} \simeq (\A^{G})_{\widehat{T}}.$$
\end{thm}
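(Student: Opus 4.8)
The plan is to observe that the statement is essentially a term-by-term unfolding of definitions, organised around the bijective correspondence between entwinings and the extended (co)monads $\widehat{G}$, $\widehat{T}$ recalled above, whose explicit formulas I take as given. First I would describe an object of $(\A_T)^{\widehat{G}}$: it consists of a $\bT$-module $(a,h_a)$ together with a $\widehat{G}$-comodule structure, i.e.\ an arrow $\theta_a\colon(a,h_a)\to\widehat{G}(a,h_a)=(G(a),G(h_a)\cdot\lambda_a)$ in $\A_T$ satisfying the counit and coassociativity laws for $\widehat{\delta},\widehat{\varepsilon}$. Since $U_T$ is faithful and $U_T\widehat{\delta}=\delta U_T$, $U_T\widehat{\varepsilon}=\varepsilon U_T$, these two laws are literally the assertion that $(a,\theta_a)\in\A^G$.

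The crucial point is the remaining requirement, namely that $\theta_a$ be a morphism in $\A_T$ from $(a,h_a)$ to $(G(a),G(h_a)\cdot\lambda_a)$; unwinding the definition of a $\bT$-module morphism, this says exactly $\theta_a\cdot h_a=G(h_a)\cdot\lambda_a\cdot T(\theta_a)$, which is precisely the commutativity of the square defining $\A^G_T(\lambda)$. Hence the data of an object of $(\A_T)^{\widehat{G}}$ is exactly a triple $(a,h_a,\theta_a)$ with $(a,h_a)\in\A_T$, $(a,\theta_a)\in\A^G$ and that square commuting, i.e.\ an object of $\A^G_T(\lambda)$, and the assignment $(a,h_a,\theta_a)\mapsto((a,h_a),\theta_a)$ is a bijection on objects. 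Dually, a $\widehat{T}$-module structure on $(a,\theta_a)\in\A^G$ is an arrow $h_a\colon\widehat{T}(a,\theta_a)=(T(a),\lambda_a\cdot T(\theta_a))\to(a,\theta_a)$ in $\A^G$ obeying the $\bT$-module laws; faithfulness of $U^G$ together with $U^G\widehat{m}=mU^G$, $U^G\widehat{e}=eU^G$ turns the module laws into $(a,h_a)\in\A_T$, while $\bG$-colinearity of $h_a$ reads $\theta_a\cdot h_a=G(h_a)\cdot\lambda_a\cdot T(\theta_a)$ --- the same square. So objects of $(\A^G)_{\widehat{T}}$ likewise correspond bijectively to triples of $\A^G_T(\lambda)$ via $((a,\theta_a),h_a)$.

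Finally I would check that the morphisms match. A morphism $f\colon(a,h_a,\theta_a)\to(b,h_b,\theta_b)$ in $\A^G_T(\lambda)$ is an arrow $f\colon a\to b$ with $f\cdot h_a=h_b\cdot T(f)$ and $\theta_b\cdot f=G(f)\cdot\theta_a$, i.e.\ an arrow that is simultaneously $\bT$-linear and $\bG$-colinear. A morphism of $\widehat{G}$-comodules is a $\bT$-module morphism that is $\widehat{G}$-colinear, and under the faithful $U_T$ the latter reduces to $\bG$-colinearity; symmetrically a $\widehat{T}$-module morphism is a $\bG$-comodule morphism that is $\bT$-linear. Thus all three categories have the same underlying arrows with the same composition and identities, so the object bijections above extend to functors that are the identity on underlying morphisms and are mutually inverse, yielding the two isomorphisms of categories. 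The only step requiring genuine care is the identification of the mixed compatibility square with the (co)linearity of the auxiliary structure map; once that is pinned down everything else is transcription, so I expect no real obstacle beyond this bookkeeping.
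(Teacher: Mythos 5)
The paper states \ref{Ent} as standard background and gives no proof, citing \cite{Wf} and \cite[Section 5]{W}; your definitional unfolding --- identifying a $\wG$-comodule structure on $(a,h_a)$ with a $\bG$-coaction $\theta_a$ whose $\bT$-linearity is precisely the mixed compatibility square, dually for $\wT$-module structures, and then matching morphisms --- is exactly the standard argument behind those references and is correct. No gaps.
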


We fix now an entwining $\lambda: TG \to GT$ and let $K: \A \to (\A^G)_{\widehat{T}}$ be a functor inducing commutativity of the diagram
$$
\xymatrix{ \A  \ar[r]^-{K}\ar[rd]_{\phi^G} & (\A^G)_{\widehat{T}} \ar[d]^{U_{\widehat{T}}}\\
& \A^G .}$$
Writing $\alpha_K : \widehat{T}\phi^G \to\phi^G$
for the corresponding  $\widehat{\bT}$-module structure
on $\phi^G$ (see \ref{M.F}), the natural transformation
$$U^G (\alpha_K): TG=TU^G\phi^G=U^G\widehat{T}\phi^G \to U^G\phi^G=G$$ 
provides a left $\bT$-module structure on $G$ (see \cite[Section 2]{MW}).

Similarly, if $K: \A \to (\A_T)^{\widehat{G}}$ is a functor inducing a commutative
diagram
$$
\xymatrix{ \A  \ar[r]^-{K}\ar[rd]_{\phi_T} & (\A_T)^{\widehat{G}} \ar[d]^{U^{\widehat{G}}}\\
& A_T ,}$$ then the natural transformation

$$ U_T (\beta_{K}): T=U_T \phi_T \to GT=GU_T\phi_T=U_T\widehat{G}\phi_T  ,$$ where $\beta_{K} :  \phi_T \to \widehat{G}\phi_T$
is the corresponding  $\widehat{\bG}$-comodule structure
on $\phi_T$ (see \ref{Cm.F}),
induces a $\bG$-comodule structure on $T$ (see again \cite[Section 2]{MW}).

The following part of \cite[Lemma 2.19]{BLV} is of use for our investigation.

\begin{lemma}\label{BLV-2.19} Let $\tau: F U_T \to F' U_T $ be a natural transformation, where $F,F':\A \to \B$ are arbitrary
functors. If the natural transformation $$\tau \phi_T : FT=FU_T\phi_T \to F'U_T\phi_T=F'T $$ is an isomorphism, then
so is $\tau$.
\end{lemma}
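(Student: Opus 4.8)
The plan is to use Beck's canonical presentation: every $\bT$-module is a split coequalizer of free modules, and since split coequalizers are absolute they are preserved by the arbitrary functors $F$ and $F'$. As $\tau$ is already natural by hypothesis, it suffices to prove that each component $\tau_{(a,h_a)} : F(a) \to F'(a)$ is invertible; the resulting componentwise inverses are then automatically natural and assemble into a two-sided inverse of $\tau$.

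First I would record the presentation. For a $\bT$-module $(a,h_a)$ the counit of $\phi_T \dashv U_T$ is $h_a : \phi_T(a)=(Ta,m_a) \to (a,h_a)$, and
$$
\xymatrix{
\phi_T(Ta) \ar@<0.5ex>[r]^-{m_a} \ar@<-0.5ex>[r]_-{T(h_a)} & \phi_T(a) \ar[r]^-{h_a} & (a,h_a)
}
$$
is a coequalizer in $\A_T$. Applying $U_T$ yields
$$
\xymatrix{
TTa \ar@<0.5ex>[r]^-{m_a} \ar@<-0.5ex>[r]_-{T(h_a)} & Ta \ar[r]^-{h_a} & a ,
}
$$
which is a \emph{split} coequalizer in $\A$: the map $e_a$ is a section of $h_a$ since $h_a \cdot e_a = 1_a$, while $e_{Ta}$ satisfies $m_a \cdot e_{Ta}=1_{Ta}$ and $T(h_a)\cdot e_{Ta}=e_a\cdot h_a$, the last identity being naturality of $e$. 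Being split, this coequalizer is absolute, so both $F$ and $F'$ carry it to coequalizers in $\B$.

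Next I would build a ladder of these coequalizers. The three maps $m_a,\,T(h_a):(TTa,m_{Ta})\to(Ta,m_a)$ and $h_a:(Ta,m_a)\to(a,h_a)$ are morphisms in $\A_T$ (by associativity of $m$, naturality of $m$, and the algebra associativity axiom respectively), so naturality of $\tau$ produces the commutative diagram
$$
\xymatrix{
F(TTa) \ar@<0.5ex>[r] \ar@<-0.5ex>[r] \ar[d]_-{\tau_{\phi_T(Ta)}} & F(Ta) \ar[r]^-{F(h_a)} \ar[d]_-{\tau_{\phi_T(a)}} & F(a) \ar[d]^-{\tau_{(a,h_a)}} \\
F'(TTa) \ar@<0.5ex>[r] \ar@<-0.5ex>[r] & F'(Ta) \ar[r]_-{F'(h_a)} & F'(a)
}
$$
in which both rows are coequalizers (from the previous step) and the two left vertical arrows are $\tau_{\phi_T(Ta)}$ and $\tau_{\phi_T(a)}$, hence isomorphisms because $\tau\phi_T$ is invertible by assumption. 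A morphism of coequalizer diagrams that restricts to an isomorphism on the parallel pair induces an isomorphism on the coequalizers; therefore the remaining vertical, namely $\tau_{(a,h_a)}$, is an isomorphism.

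The one genuinely load-bearing step, and the one I would state most carefully, is the absoluteness of the canonical presentation: since $F$ and $F'$ are arbitrary functors with no exactness assumed, it is essential that $TTa \rightrightarrows Ta \to a$ be \emph{split} rather than merely a coequalizer, so that its image under $F$ and under $F'$ remains a coequalizer. Granting this, the remainder is the purely formal observation that a colimit is preserved under an isomorphism of diagrams, applied to each $(a,h_a)$ and then promoted to a natural isomorphism by the naturality already built into $\tau$.
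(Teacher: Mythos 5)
Your proof is correct, and it is the standard argument (the paper itself gives no proof here, citing Lemma~2.19 of Bruguières--Lack--Virelizier, whose proof proceeds in essentially this way): present each $\bT$-module via the canonical $U_T$-split coequalizer $TTa\rightrightarrows Ta\to a$, use absoluteness of split coequalizers to see that $F$ and $F'$ preserve it, and conclude from naturality of $\tau$ at the algebra morphisms $m_a$, $T(h_a)$, $h_a$ that $\tau_{(a,h_a)}$ is the map induced between two coequalizers by an isomorphism of parallel pairs. All the load-bearing points are correctly identified and verified, so nothing further is needed.
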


\begin{proposition}\label{P.1.3}
Suppose $K: \A \to (\A_T)^{\widehat{G}}$ to be a functor
with $ U^{\widehat{G}}K=\phi_T$ and
denote by $\beta_K : \phi_T \to \widehat{G}\phi_T$
the corresponding  $\widehat{\bG}$-comodule
structure on $\phi_T$. Then $(\phi_T,\beta_K)$ is $\widehat{\bG}$-Galois if and only if
$(T, U_T(\beta_K))$ is $\bG$-Galois.
\end{proposition}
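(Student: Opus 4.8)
The plan is to translate both Galois conditions into statements about two concrete natural transformations, and then to show that one is obtained from the other by whiskering with $U_T$ and $\phi_T$. By Definition~\ref{def}, the pair $(\phi_T,\beta_K)$ is $\widehat{\bG}$-Galois precisely when the comonad morphism
$$t_{\overline{\phi_T}}:\phi_T U_T \xrightarrow{\ \beta_K U_T\ } \widehat{G}\phi_T U_T \xrightarrow{\ \widehat{G}\varepsilon_T\ } \widehat{G},$$
built as in \ref{Cm.F} from the counit $\varepsilon_T:\phi_TU_T\to 1$ of the adjunction $\phi_T\dashv U_T$, is an isomorphism; while by Definition~\ref{def-galois}, $(T,U_T(\beta_K))$ is $\bG$-Galois precisely when
$$\gamma_T:TT\xrightarrow{\ U_T(\beta_K)\,T\ }GTT\xrightarrow{\ Gm\ }GT$$
is an isomorphism. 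I would first record the identity $\gamma_T=U_T(t_{\overline{\phi_T}})\,\phi_T$ and then deduce the equivalence from it.

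To establish this identity I would whisker $t_{\overline{\phi_T}}$ on the left by $U_T$ and on the right by $\phi_T$, treating the two factors separately and simplifying each with the extension relations $U_T\widehat{G}=GU_T$ and $U_T\phi_T=T$. The first factor $\beta_K U_T$ becomes $U_T(\beta_K)\,T$ after whiskering, which is exactly the left-hand leg of $\gamma_T$ and which exhibits $U_T(\beta_K)$ as the induced $\bG$-comodule structure on $T$. For the second factor one gets $U_T(\widehat{G}\varepsilon_T)\,\phi_T=G\,(U_T\varepsilon_T\phi_T)$, and here the decisive point is the standard identification $U_T\varepsilon_T\phi_T=m$ of the monad multiplication of $\bT$ with the image of the counit under $U_T(-)\phi_T$; this turns the second factor into $Gm$. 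Composing the two yields $U_T(t_{\overline{\phi_T}})\,\phi_T=(Gm)\,(U_T(\beta_K)\,T)=\gamma_T$. This bookkeeping with whiskering and the extension relations is the step requiring the most care, and I expect it to be the main (though essentially routine) obstacle.

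With the identity in hand the equivalence follows formally. If $(\phi_T,\beta_K)$ is $\widehat{\bG}$-Galois, then $t_{\overline{\phi_T}}$ is an isomorphism, hence so is $U_T(t_{\overline{\phi_T}})$, and therefore so is its whiskering $\gamma_T=U_T(t_{\overline{\phi_T}})\,\phi_T$; thus $(T,U_T(\beta_K))$ is $\bG$-Galois. Conversely, assume $\gamma_T$ is an isomorphism. Then $U_T(t_{\overline{\phi_T}})\,\phi_T$ is an isomorphism, and applying Lemma~\ref{BLV-2.19} with $F=T$, $F'=G$ and $\tau=U_T(t_{\overline{\phi_T}}):TU_T\to GU_T$ shows that $U_T(t_{\overline{\phi_T}})$ is an isomorphism. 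Since the forgetful functor $U_T:\A_T\to\A$ reflects isomorphisms, $t_{\overline{\phi_T}}$ is itself an isomorphism, so $(\phi_T,\beta_K)$ is $\widehat{\bG}$-Galois. This completes both implications, the only non-formal ingredients being the identity of the second paragraph and the invocation of Lemma~\ref{BLV-2.19} together with the fact that $U_T$ reflects isomorphisms.
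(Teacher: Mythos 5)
Your proposal is correct and follows essentially the same route as the paper's own proof: both reduce the claim to the identity $\gamma_T = U_T(t_{\overline{\phi_T}})\phi_T$ (using $U_T\widehat{G}=GU_T$ and $U_T\varepsilon_T\phi_T=m$), and both handle the converse direction via Lemma~\ref{BLV-2.19} followed by conservativity of $U_T$. No gaps.
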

\begin{proof} By \ref{def}, $(\phi_T,\beta_K)$ is $\widehat{\bG}$-Galois if the comonad morphism $t_K:\phi_TU_T \to \widehat{G}$, which is the composite $$\phi_TU_T\xrightarrow{\beta_KU_T}\widehat{G}\phi_TU_T
\xrightarrow{\widehat{G}\varepsilon^T} \widehat{G},$$ is an isomorphism, while, by \ref{def-galois}, $(T, U_T(\beta_K))$ is $\bG$-Galois if the composite
$$\xymatrix{\gamma_T : TT
\ar[r]^-{U_T\beta_K T }& GTT \ar[r]^-{G m}& GT}$$ is an isomorphism. So, we have to show that $t_K$ is an isomorphism if and only if $\gamma_T$ is so.

Since $U_T \widehat{G}=GU_T$, the natural transformation
$$U_Tt_K:U_T\phi_TU_T\xrightarrow{U_T\beta_KU_T}U_T\widehat{G}\phi_TU_T
\xrightarrow{U_T\widehat{G}\varepsilon^T}U_T \widehat{G}$$  can be rewritten as
$$TU_T\xrightarrow{U_T\beta_KU_T}GU_T\phi_TU_T \xrightarrow{GU_T\varepsilon^T}GU_T .$$
Then $U_Tt_K\phi_T$ is the composite $$TT=TU_T \phi_T\xrightarrow{U_T\beta_KU_T \phi_T}GU_T\phi_TU_T \phi_T\xrightarrow{GU_T\varepsilon^T \phi_T}GU_T \phi_T,$$ and since $U_T\varepsilon^T \phi_T=m :TT= U_T \phi_T U_T \phi_T \to U_T \phi_T=T ,$  it follows that $U_Tt_K\phi_T$ is just $\gamma_T.$ Now, if $t_K$ is an isomorphism, it is then clear that $\gamma_T=U_T(t_K)\phi_T$ is also an isomorphism. Conversely, if $\gamma_T$ is an isomorphism, then by Lemma \ref{BLV-2.19}, $U_Tt_K$ is also an isomorphism.
But since $U_T$ is conservative, $t_K$ is an isomorphism too. This completes the proof.
\end{proof}

Dually, one has

\begin{proposition}\label{P.1.4}
Suppose that $K : \A \to (\A^G)_{\widehat{T}}$ is a
functor with $U_{\widehat{T}} K=\phi^G$ and let $\alpha_K :
\widehat{T}\phi^G \to \phi^G$ be the corresponding
$\widehat{\bT}$-module structure on $\phi^G$.
Then $(\phi^G, \alpha_K)$ is
$\widehat{\bT}$-Galois if and only if $(G, U^G(\alpha_K))$ is
$\bT$-Galois.
\end{proposition}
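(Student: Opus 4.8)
The plan is to dualise the proof of Proposition \ref{P.1.3} step by step, replacing the free--forgetful adjunction $\phi_T \dashv U_T$ by the forgetful--cofree adjunction $U^G \dashv \phi^G$ and exchanging the roles of monad and comonad. First I would record the two Galois conditions as invertibility statements for explicit natural transformations. By Definition \ref{D.1.1}, applied to the monad $\widehat{\bT}$ on $\A^G$ with the module $\phi^G$ (whose left adjoint is $U^G$), the pair $(\phi^G,\alpha_K)$ is $\widehat{\bT}$-Galois exactly when the monad morphism
$$t_K : \widehat{T}\xrightarrow{\widehat{T}\eta^G}\widehat{T}\phi^G U^G\xrightarrow{\alpha_K U^G}\phi^G U^G$$
(formed as in \ref{M.F}, with $\eta^G:1\to\phi^G U^G$ the unit of $U^G\dashv\phi^G$) is an isomorphism. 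On the other side, by \ref{def-galois} the pair $(G,U^G(\alpha_K))$ is $\bT$-Galois exactly when
$$\gamma^G : TG\xrightarrow{T\delta}TGG\xrightarrow{U^G(\alpha_K)\,G}GG$$
is an isomorphism.

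Next I would apply $U^G$ to $t_K$. Using the extension identities $U^G\widehat{T}=TU^G$ and $U^G\phi^G=G$ together with naturality, $U^G t_K$ rewrites as
$$TU^G\xrightarrow{TU^G\eta^G}TGU^G\xrightarrow{U^G(\alpha_K)\,U^G}GU^G.$$
The crux is then to precompose with the cofree functor $\phi^G$ and verify $(U^G t_K)\phi^G=\gamma^G$. This rests on the identity $U^G\eta^G\phi^G=\delta$, which holds because the component of $\eta^G$ at the cofree comodule $\phi^G(a)=(G(a),\delta_a)$ is precisely its comodule structure $\delta_a$; granting it, $(U^G t_K)\phi^G=U^G(\alpha_K)G\cdot T\delta=\gamma^G$, the exact dual of the equality $U_T t_K\phi_T=\gamma_T$ used in \ref{P.1.3}.

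With this identity the two implications are formal. If $t_K$ is an isomorphism then so is $U^G t_K$, whence $\gamma^G=(U^G t_K)\phi^G$ is an isomorphism. Conversely, if $\gamma^G$ is an isomorphism, then $(U^G t_K)\phi^G$ is an isomorphism, and applying the dual of Lemma \ref{BLV-2.19} (which is just Lemma \ref{BLV-2.19} read in the opposite category, where $\bG$ becomes a monad, $\phi^G$ its free functor and $U^G$ the forgetful one) to $U^G t_K : TU^G\to GU^G$ shows that $U^G t_K$ is an isomorphism; since $U^G$ is comonadic, hence conservative, $t_K$ is an isomorphism as well.

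I expect no genuine mathematical difficulty, as the assertion is the Eilenberg--Moore dual of Proposition \ref{P.1.3}; the substance is careful bookkeeping of the whiskerings. The step I would watch most closely is the verification of $(U^G t_K)\phi^G=\gamma^G$, and in particular the identification $U^G\eta^G\phi^G=\delta$, since this is where the concrete description of the cofree comodules and of the unit $\eta^G$ enters; the only other point to confirm is that the dual of Lemma \ref{BLV-2.19} is legitimately available, which follows from the self-duality of its underlying split (co)equalizer argument.
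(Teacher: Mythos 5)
Your proof is correct and matches the paper's intent exactly: the paper offers no separate argument for this proposition, deriving it from Proposition \ref{P.1.3} with the single word ``Dually'', and your write-up is precisely that dualization carried out in detail, with the two delicate points (the identity $U^G\eta^G\phi^G=\delta$ at cofree comodules, and the availability of the dual of Lemma \ref{BLV-2.19}) correctly identified and justified.
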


In view of Propositions \ref{P.1.3} and \ref{P.1.4}, we get from Propositions \ref{P.1.1} and \ref{P.1.2}:

\begin{theorem}\label{T.1} In the situation of Proposition \ref{P.1.3},
the functor $K: \A \to (\A_T)^{\widehat{G}}$ is an equivalence of categories if
and only if  $(T, U_T(\beta_K))$ is $\bG$-Galois and the monad $\bT$ is of effective
descent type (i.e. the functor $\phi_T : \A \to \A_T$ is comonadic.)

Dually, in the situation of Proposition \ref{P.1.4}, the functor
$K : \A \to (\A^G)_{\widehat{T}}$ is an equivalence if and only if $(G, U^G(\alpha_K))$ is
$\bT$-Galois and the comonad $\bG$ is of effective codescent type (i.e. the functor $\phi^G : \A \to \A^G$ is monadic).
\end{theorem}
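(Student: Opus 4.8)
The plan is to get Theorem~\ref{T.1} essentially for free by combining the characterisation of when the comparison functor $\overline{F}$ (here $K$) is an equivalence with the Galois reformulation just established. The whole point of the preceding propositions is to translate the ``extended'' Galois conditions on the lifted (co)monads into the intrinsic Galois conditions \ref{def-galois} on $T$ and $G$, so the theorem should reduce to assembling these ingredients.

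First I would treat the first assertion. The functor $K:\A \to (\A_T)^{\widehat{G}}$ satisfies $U^{\widehat{G}}K=\phi_T$, so in the notation of \ref{Cm.F} the functor $K$ is exactly the comparison functor $\overline{\phi_T}$ associated to the left $\widehat{\bG}$-comodule $\phi_T:\A \to \A_T$ (with right adjoint $U_T:\A_T \to \A$ and comodule structure $\beta_K$). By Proposition~\ref{P.1.2}, applied to the comonad $\widehat{\bG}$ on $\A_T$, the functor $K$ is an equivalence if and only if $\phi_T$ is $\widehat{\bG}$-Galois and $\phi_T$ is comonadic. Now Proposition~\ref{P.1.3} says precisely that $(\phi_T,\beta_K)$ is $\widehat{\bG}$-Galois if and only if $(T,U_T(\beta_K))$ is $\bG$-Galois. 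Substituting this equivalence into the criterion from Proposition~\ref{P.1.2}, and recalling that ``$\phi_T$ comonadic'' is by definition what it means for $\bT$ to be of effective descent type, yields the first claim.

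Next I would obtain the second assertion by the dual argument, running exactly the same reasoning with the roles of monad and comonad interchanged: $K:\A \to (\A^G)_{\widehat{T}}$ is the comparison functor $\overline{\phi^G}$ for the left $\widehat{\bT}$-module $\phi^G$, so Proposition~\ref{P.1.1} characterises $K$ as an equivalence precisely when $\phi^G$ is $\widehat{\bT}$-Galois and monadic; then Proposition~\ref{P.1.4} replaces the $\widehat{\bT}$-Galois condition by $(G,U^G(\alpha_K))$ being $\bT$-Galois, and ``$\phi^G$ monadic'' is the definition of $\bG$ being of effective codescent type. Since the statement itself flags this half as ``dually,'' I would simply indicate that it follows by applying the preceding argument in the dual situation of Proposition~\ref{P.1.4}.

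The only genuine point requiring care — and the step I expect to be the main obstacle — is verifying that $K$ really is the comparison functor to which Proposition~\ref{P.1.2} (resp.\ \ref{P.1.1}) applies, i.e.\ that the $\widehat{\bG}$-comodule structure on $\phi_T$ used in Proposition~\ref{P.1.2} coincides with the $\beta_K$ of Proposition~\ref{P.1.3} and that the adjunction and comonad morphism $t_K$ match up. This is a matter of tracing through the identifications in \ref{Cm.F} and the construction of $t_{\overline{F}}$, which is routine but is where a careless reader could go astray; once it is in place, the theorem is a direct corollary of the two propositions and the relevant definitions, with no further computation needed.
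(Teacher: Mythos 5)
Your proposal is correct and follows exactly the paper's own (very brief) argument: the paper derives Theorem \ref{T.1} by combining Propositions \ref{P.1.1} and \ref{P.1.2} with the Galois translations of Propositions \ref{P.1.3} and \ref{P.1.4}, which is precisely your decomposition. The identification of $K$ with the comparison functor for the lifted (co)monad, which you flag as the only point needing care, is already built into the hypotheses $U^{\widehat{G}}K=\phi_T$ and $U_{\widehat{T}}K=\phi^G$ of those propositions, so nothing further is required.
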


\begin{thm}\label{G.E}{\bf Galois entwinings.} \em
Let $\bT=(T,m,e)$ be a monad and $\bG=(G,\delta,\ve)$ a comonad on a
category $\A$ with an entwining  $\lambda:TG\to GT$.
If $G$ has a grouplike morphism $g:1\to G$ (in the sense of \cite[3.14]{MW}), then $T$ has two left
$G$-comodule structures given by
$$\xymatrix{ \quad gT:T\to GT }
\; \mbox{ and } \;\quad \xymatrix{\tilde g:  T  \ar[r]^{Tg}& TG \ar[r]^\lambda & GT},$$
and it was shown in \cite{MW} that the equaliser $(T^g, i)$ of these natural transformations admits the
structure of a monad in such a way that $i: T^g \to T$ becomes a monad morphism. We write
$i^* : \A_{\emph{T}} \to \A_{T^g}$ for the functor that takes an arbitrary $\bT$-algebra $(a,h_a)\in \A_T$ to
the $\bT^g$-algebra $(a, h_a \cdot i_a) \in \A_{T^g}$. When the category $\A_T$ admits coequalisers   of reflexive pairs  (which is
certainly the case if   $\A$ has coequalisers of reflexive pairs
and  the functor $T$ preserves them), $i^*$ has a left adjoint
$i_* :\A_{T^g} \to \A_T$. In this case, according to the results of \cite{MW}, there is a comparison functor
$\overline{i}:\A_{T^g} \to (\A_T)^{\widehat{G}}$ yielding
commutativity of the diagram
\begin{equation}\label{D.1}
\xymatrix{\A \ar@/^2pc/@{->}[rrrr]^{K_{g, \bG} }\ar[rrd]_{\phi_T} \ar[rr]^{\phi_{T^g}}& &
\A_{T^g}\ar[d]^{i_*} \ar[rr]^{\overline{i}} & &
(\A_T)^{\widehat{G}}  \ar[lld]^{U^{\widehat{G}}}\\
 & &  \A_T & &}
\end{equation}
where $U^{\widehat{G}} : (\A_T)^{\widehat{G}} \to \A_T$ is the
evident forgetful functor and $K_{g,\bG}:\A \to (\A_T)^{\widehat{G}}$ is the functor
that takes $a \in \A$ to $((T(a),m_a),\widetilde{g_a})\in (\A_T)^{\widehat{G}}$ (see \cite[Section 3]{MW}).

Let us write $\widetilde{G}$ for the comonad
generated by the adjunction $i^* \dashv i_*$ and write
\begin{itemize}
  \item $S_{K_{g,\bG}}: U_T\phi_T \to \widehat{G}$ 
    for the comonad morphism corresponding to the outer diagram in (\ref{D.1}),
  \item $S_{\phi_{T^g}}: \overline{G} \to \widetilde{G}$ for the comonad morphism
    corresponding to the left triangle in (\ref{D.1}),
  \item and $S_{\overline{i}}:\widetilde{G} \to \widehat{G}$ for the comonad
    morphism corresponding to the right triangle in (\ref{D.1}) that exists
    according to \cite[Proposition 1.20]{MW}.
\end{itemize}
\end{thm}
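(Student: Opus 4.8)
This block assembles a chain of constructions, most of them recalled from \cite{MW}; the plan is to establish them in the order in which they are used. First I would verify that $gT$ and $\tilde g=\lambda\cdot Tg$ are genuinely left $\bG$-comodule structures on $T$. For $gT$ the counit and coassociativity axioms of \ref{Cm.F} reduce, via the interchange law, to the grouplike identities $\varepsilon\cdot g=1$ and $\delta\cdot g=Gg\cdot g$; for $\tilde g$ they follow by combining those identities with the counit triangle $\varepsilon T\cdot\lambda=T\varepsilon$ and the coassociativity pentagon $G\lambda\cdot\lambda G\cdot T\delta=\delta T\cdot\lambda$ of the entwining \ref{Ent}. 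This is a routine diagram chase. With both structures in hand I form the equaliser $(T^g,i)$ of the pair $gT,\tilde g:T\rightrightarrows GT$ (computed pointwise, assuming the relevant equalisers exist in $\A$).

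Next I would equip $T^g$ with a monad structure by restricting $e$ and $m$ along $i$. For the unit, interchange gives $gT\cdot e=Ge\cdot g$, while the entwining unit axiom $\lambda\cdot eG=Ge$ gives $\tilde g\cdot e=Ge\cdot g$, so $e$ equalises the pair and factors uniquely as $e=i\cdot e^g$. For the multiplication I would check that $m\cdot ii:T^gT^g\to T$ equalises $gT$ and $\tilde g$, the essential ingredient being the multiplicativity axiom $Gm\cdot\lambda T\cdot T\lambda=\lambda\cdot mG$; the universal property then yields $m^g$ with $i\cdot m^g=m\cdot ii$, forces the monad axioms, and makes $i$ a monad morphism. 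The restriction functor $i^*:\A_T\to\A_{T^g}$ exists for any such morphism, and its left adjoint $i_*$ is the usual extension of scalars: $i_*(b,k_b)$ is the coequaliser in $\A_T$ of the reflexive pair $Tk_b,\ m_b\cdot Ti_b:TT^gb\rightrightarrows Tb$, which exists precisely under the stated reflexive-coequaliser hypothesis (in particular when $\A$ has such coequalisers and $T$ preserves them, since then $U_T$ creates them). Evaluating on free algebras gives $i_*\phi_{T^g}=\phi_T$, the left triangle of (\ref{D.1}).

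For the comparison functor I would proceed by composition. The adjunction $i_*\dashv i^*$ generates the comonad $\widetilde{G}=i_*i^*$ on $\A_T$ together with the canonical comparison $\A_{T^g}\to(\A_T)^{\widetilde{G}}$ sending $b$ to $(i_*b,i_*\eta_b)$. Composing this with the functor $(\A_T)^{\widetilde{G}}\to(\A_T)^{\widehat{G}}$ induced by the comonad morphism $S_{\overline{i}}:\widetilde{G}\to\widehat{G}$ of \cite[Proposition 1.20]{MW} yields $\overline{i}:\A_{T^g}\to(\A_T)^{\widehat{G}}$; since both factors are the identity on underlying $\A_T$-objects one gets $U^{\widehat{G}}\overline{i}=i_*$ (the right triangle), and evaluation on free $T^g$-algebras identifies $\overline{i}\phi_{T^g}$ with $K_{g,\bG}$. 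The three morphisms $S_{K_{g,\bG}}$, $S_{\phi_{T^g}}$ and $S_{\overline{i}}$ are then the canonical comparison comonad morphisms attached respectively to the outer diagram, the left triangle and the right triangle of (\ref{D.1}), all instances of the comodule-functor machinery of \ref{Cm.F} and \cite[Proposition 1.20]{MW}.

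The step I expect to be the main obstacle is this last one: producing $S_{\overline{i}}$ (equivalently, a $\widehat{G}$-comodule structure on $i_*$) and checking that after evaluation on free $T^g$-algebras it reproduces exactly the structure $\tilde g$ defining $K_{g,\bG}$. This forces one to control how the lifted comonad $\widehat{G}$ interacts with the reflexive coequaliser defining $i_*$, feeding in the entwining $\lambda$ and the grouplike morphism $g$ simultaneously; by contrast the comodule and monad verifications of the earlier paragraphs are routine diagram chases.
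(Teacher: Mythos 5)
Your reconstruction is correct and follows essentially the same route as the paper, which offers no proof of this block at all but simply recalls the constructions from [MW]: the two comodule structures via the grouplike identities and the entwining axioms, the monad structure on the equaliser $T^g$, the left adjoint $i_*$ as a reflexive coequaliser of free algebras, and the comparison functor $\overline{i}$ as equivalent data to a $\widehat{G}$-comodule structure on $i_*$. The one step you defer -- producing that comodule structure $\beta\colon i_*\to\widehat{G}i_*$ and matching it against $\tilde g$ on free algebras -- is precisely the step the paper outsources to [MW, Proposition 1.20]; it closes without any preservation of coequalisers by $\widehat{G}$, since one only needs the composite $\phi_T U_{T^g}\xrightarrow{\vartheta U_{T^g}}\widehat{G}\phi_T U_{T^g}\xrightarrow{\widehat{G}q}\widehat{G}i_*$ to coequalise the defining pair (a chase using $gT\cdot i=\tilde g\cdot i$ and the multiplicativity axiom of $\lambda$), after which the universal property of the pointwise coequaliser $q$ hands you $\beta$, and $U^{\widehat{G}}\overline{i}=i_*$ together with $\overline{i}\phi_{T^g}=K_{g,\bG}$ follow by construction.
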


\begin{definition}\cite{MW} \em In the circumstances above, we call 
$(\bT, \bG, \lambda, g)$ a \emph{Galois entwining}
if the comonad morphism $S_{\overline{i}}:\widetilde{G} \to \widehat{G}$
is an isomorphism, or, equivalently, the functor $i_*$ is $\widehat{G}$-Galois. In this case
$g: 1 \to G$ is said to be a \emph{Galois group-like morphism}.
\end{definition}

\begin{theorem}\cite{MW}\label{Gal-entw} Let $\lambda : TG \to GT$ be an
entwining from a monad $\bT$ to a comonad $\bG$ on a
category $\A$. Suppose that $g: 1 \to G$ is a grouplike morphism
such that the corresponding functor $i^*:\A_{T} \to \A_{T^g}$
admits a left adjoint functor $i_*:\A_{T^g} \to \A_T$.
Then the comparison functor $\overline{i}:\A_{T^g}\to (\A_T)^{\widehat{G}}$
is an equivalence of categories if and only if $(\bT,
\bG, \lambda, g)$ is a Galois entwining and the functor
$i_*$ is comonadic.
\end{theorem}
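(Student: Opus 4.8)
The plan is to read this statement as a direct application of Proposition \ref{P.1.2}, but carried out on the Eilenberg--Moore category $\A_T$ in place of $\A$, with the comonad $\widehat{\bG}$ in place of $\bG$ and the left adjoint $i_*$ in place of the comodule functor $F$. The pivotal observation is that the right-hand triangle of diagram (\ref{D.1}), which asserts $U^{\widehat{G}}\,\overline{i}=i_*$, exhibits $\overline{i}$ as precisely the comparison functor $\overline{F}$ of Theorem \ref{Cm.F} attached to $i_*$ regarded as a left $\widehat{\bG}$-comodule functor.

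First I would make this identification explicit. Since $U^{\widehat{G}}\,\overline{i}=i_*$, the functor $i_*:\A_{T^g}\to\A_T$ acquires a left $\widehat{\bG}$-comodule structure, and $\overline{i}$ is the induced functor into $(\A_T)^{\widehat{G}}$. The right adjoint of $i_*$ is $i^*$, so the comonad generated by the adjunction $i_*\dashv i^*$ is $i_*i^*=\widetilde{G}$, and the comonad morphism $t_{\overline{F}}:i_*i^*\to\widehat{G}$ produced by Theorem \ref{Cm.F} is, by construction, the map $S_{\overline{i}}:\widetilde{G}\to\widehat{G}$ coming from that same triangle. Consequently, $i_*$ being $\widehat{\bG}$-Galois in the sense of Definition \ref{def} is exactly the assertion that $S_{\overline{i}}$ is an isomorphism.

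With these identifications in hand, the proof is immediate. Proposition \ref{P.1.2}, applied over $\A_T$, says that $\overline{i}$ is an equivalence of categories if and only if $i_*$ is $\widehat{\bG}$-Galois and $i_*$ is comonadic. By the definition preceding the theorem, $i_*$ is $\widehat{\bG}$-Galois if and only if $(\bT,\bG,\lambda,g)$ is a Galois entwining, and substituting this into the criterion gives the claim. I expect no genuine obstacle; the one point deserving care is the bookkeeping in the second paragraph, namely verifying that the comonad morphism output by the generic comodule-functor machinery for $F=i_*$ literally coincides with the previously defined $S_{\overline{i}}$ (and not merely up to an unspecified isomorphism), which comes down to matching the two descriptions of the counit of $i_*\dashv i^*$.
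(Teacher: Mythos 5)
Your argument is correct and is exactly the intended derivation: the paper gives no proof of its own here (the theorem is quoted from \cite{MW}), but the definitions in \ref{G.E} --- in particular the identification of a Galois entwining with $i_*$ being $\widehat{G}$-Galois, where $S_{\overline{i}}$ is by definition the comonad morphism $t_{\overline{i}}:i_*i^*\to\widehat{G}$ attached to the right-hand triangle of diagram (\ref{D.1}) --- are set up precisely so that the statement becomes an instance of Proposition \ref{P.1.2} applied over $\A_T$ with $F=i_*$, $R=i^*$ and $\widehat{G}$ in place of $G$. The bookkeeping point you flag is thus settled by the paper's own conventions, and no further verification is needed.
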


\section{Bimonads}

The preceding results allow to formulate new conditions which
turn bimonads into Hopf monads.
Recall from \cite[Definition 4.1]{MW1} that
a  bimonad $\bH$ on any category $\A$ is an endofunctor $H : \A \to \A$
with a monad structure
 $\uH=(H, m, e)$, a comonad structure $\oH=(H, \delta, \varepsilon)$,
and an entwining    $\lambda:HH\to HH$ from the monad $\uH$ to the comonad $\oH$
 inducing commutativity of the diagrams
$$
 \xymatrix{
HH \ar@{->}@<0.5ex>[r]^-{\varepsilon H} \ar@ {->}@<-0.5ex>[r]_-{H\varepsilon}
\ar[d]_m & H \ar[d]^\varepsilon \\
    H \ar[r]^\varepsilon & 1 , } \qquad
\xymatrix{
1 \ar[r]^-{e} \ar[d]_-{e} & H \ar[d]^-{\delta}\\
H \ar@{->}@<0.5ex>[r]^-{eH} \ar@ {->}@<-0.5ex>[r]_-{He}& HH,}
 \qquad
\xymatrix{ 1\ar[r]^e \ar[dr]_= & H \ar[d]^\varepsilon\\
         & 1,}$$
$$\xymatrix{
HH \ar[r]^-{m} \ar[d]_-{H \delta}& H \ar[r]^-{\delta}& HH \\
HHH \ar[rr]_-{\lambda H}&& HHH  \ar[u]_-{Hm}.}
$$

Given a bimonad $\bH$, one has the comparison functor
$$K_H : \A \to \A_H^H=\A^{\overline{H}}_{\underline{H}}(\lambda),\quad
 a \mapsto (H(a), m_a, \delta_a) $$
with commutative diagrams
$$
\xymatrix{ \A \ar[r]^-{K_H} \ar[rd]_-{\phi_{\underline{H}}} &
 \A_H^H
\ar[d]^-{U^{\widehat{\overline{H}}}}  \simeq (\A_{\underline{H}})^{\widehat{\overline{H}}} \\
& \A_{\underline{H}},}  \qquad
 \xymatrix{
\A \ar[r]^-{K_H} \ar[rd]_-{\phi^{\overline{H}}} &
\A_H^H
\ar[d]^-{U_{\widehat{\underline{H}}}}\simeq (\A^{\overline{H}})_{\widehat{\underline{H}}}\\
& \A^{\overline{H}}.}$$

Writing $K_{\uH}$ (resp. $K_{\oH}$) for the composite $\A \xrightarrow{K_H} \A_H^H \simeq (\A_{\underline{H}})^{\widehat{\overline{H}}}$
(resp. $\A \xrightarrow{K_H} \A_H^H \simeq (\A^{\overline{H}})_{\widehat{\underline{H}}}$) and writing $\alpha_{K_{\uH}}$ (resp. $\alpha_{K_{\oH}}$) for the $\widehat{\overline{H}}$-comodule (resp. $\widehat{\underline{H}}$-module) structure on $\phi_{\uH}$ (resp. $\phi^{\oH}$) that exists by \ref{Cm.F} (resp. \ref{M.F}), we know from \cite[4.3]{MW1} that $U_{\uH}(\alpha_{K_{\uH}})=\delta: H \to HH$ and that $U^{\oH}(\alpha_{K_{\oH}})=m:HH \to H.$ It then follows from \ref{def-galois} that $\gamma_{\uH}:\uH \,\uH \to \oH\uH$ is the composite $$HH \xrightarrow{\delta H}HHH \xrightarrow{H m}HH,$$ while $\gamma^{\oH}: \uH \oH \to \oH \,\oH$ is the composite $$HH \xrightarrow{H \delta}HHH \xrightarrow{mH} HH.$$

Employing the notions considered above we have the following list of

\begin{thm}\label{T.2}{\bf Characterisations of Hopf monads.} For a bimonad $\bH$ on a Cauchy complete category $\A$, the following are equivalent:
\begin{blist}
\item  $(\phi_{\uH},\alpha_{K_{\uH}})$ is $\woH$-Galois, i.e., the composite $t_{K_{\uH}}:\phi_{\uH}U_{\uH}\xrightarrow{\alpha_{K_{\uH}}U_{\uH}} \widehat{\overline{H}}\phi_{\uH}U_{\uH} \xrightarrow{\widehat{\overline{H}}\varepsilon_{\uH}}\widehat{\overline{H}}$ is an isomorphism;

\item $(\phi^{\oH}, \alpha_{K_{\oH}})$ is $\widehat{\uH}$-Galois, i.e., the composite $t_{K_{\oH}}:\widehat{\uH}\xrightarrow{\widehat{\uH} \eta^{\oH}} \widehat{\uH} \phi^{\oH}U^{\oH}\xrightarrow{\alpha_{K_{\oH}}U^{\oH}} \phi^{\oH}U^{\oH}$ is an isomorphism;

\item  the unit $e:1\to H$ is a Galois grouplike morphism;

\item  the functor $K_H : \A \to \A^H_H$ {\em(}hence also $K_{\uH}:\A \to (\A_{\underline{H}})^{\widehat{\overline{H}}}$ and
$K_{\oH}: \A \to (\A^{\overline{H}})_{\widehat{\underline{H}}}${\em)} is an
equivalence of categories;

\item  $(H, m)$ is $\overline{H}$-Galois, i.e.,  $\gamma_{\uH}:HH \xrightarrow{\delta H}HHH \xrightarrow{H m}HH$ is an isomorphism;

\item  $(H, \delta)$ is $\underline{H}$-Galois, i.e.,   $\gamma^{\oH}:HH \xrightarrow{H \delta}HHH \xrightarrow{mH} HH$ is an isomorphism;

\item  $\bH$ has an antipode, i.e., there exists a natural transformation $S: H \to H$ with
$$m\cdot HS \cdot \delta=e \cdot \varepsilon=m\cdot SH \cdot \delta.$$
\end{blist}
\end{thm}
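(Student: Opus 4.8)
The plan is to run a web of implications, proving the purely formal equivalences over an arbitrary base and isolating the single use of Cauchy completeness in the passage from a Galois condition to a genuine equivalence. I would treat (a)$\Leftrightarrow$(e), (b)$\Leftrightarrow$(f) and (a)$\Leftrightarrow$(c) first, then (c)$\Rightarrow$(d) as the heart of the matter, and finally splice in (g).

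For the formal equivalences: since \cite[4.3]{MW1} records $U_{\uH}(\alpha_{K_{\uH}})=\delta$ and $U^{\oH}(\alpha_{K_{\oH}})=m$, Proposition~\ref{P.1.3} (applied to the monad $\uH$ and comonad $\oH$) identifies (a) with the $\oH$-Galois condition on $(H,\delta)$, i.e.\ with invertibility of $\gamma_{\uH}=Hm\cdot\delta H$; this is (a)$\Leftrightarrow$(e), and dually Proposition~\ref{P.1.4} gives (b)$\Leftrightarrow$(f). For (a)$\Leftrightarrow$(c) I would place $\bH$ in the framework of \ref{G.E} with $g=e$: the unit is grouplike for $\oH$ because $\ve\cdot e=1$ and $\delta\cdot e=He\cdot e$ are bimonad axioms. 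The key sub-step is $H^{e}\cong\mathrm{Id}$ with $i=e$. Using the bimonad compatibility $\lambda\cdot He=\delta$, the two comodule structures $eH$ and $\tilde e=\lambda\cdot He$ become $eH$ and $\delta$ respectively, and $e:\mathrm{Id}\to H$ is their equaliser: it is a split mono via $\ve$, and for $f$ with $eH\cdot f=\delta\cdot f$ one applies $H\ve$ and uses the comonad law $H\ve\cdot\delta=1$ together with naturality of $e$ to obtain $e\cdot\ve\cdot f=f$. Consequently $i^{*}=U_{\uH}$, $i_{*}=\phi_{\uH}$ and $\overline i=K_{\uH}$, so ``$e$ is a Galois grouplike morphism'' reads exactly as (a).

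The easy halves (d)$\Rightarrow$(a) and (d)$\Rightarrow$(b) are the ``only if'' directions of Theorem~\ref{T.1}: an equivalence forces the relevant Galois morphism to be invertible. The substance is (c)$\Rightarrow$(d). By Theorem~\ref{Gal-entw} applied to $(\uH,\oH,\lambda,e)$ and the identifications above, $K_{\uH}=\overline i$ is an equivalence precisely when $(\uH,\oH,\lambda,e)$ is a Galois entwining (this is (c)) and $i_{*}=\phi_{\uH}$ is comonadic; thus everything reduces to showing that on a Cauchy complete $\A$ the Galois hypothesis makes $\phi_{\uH}$ comonadic. I expect this to be the main obstacle and the sole place the hypothesis on $\A$ is used. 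I would verify the dual Beck criterion. Conservativity of $\phi_{\uH}$ is free: $\ve\cdot e=1$ makes $H$ faithful, and if $Hf$ is invertible then $f$ is split mono via $k=\ve\cdot(Hf)^{-1}\cdot e$, whence $H(k)=(Hf)^{-1}$ and faithfulness of $H$ give that $f$ is invertible. The remaining clause — existence in $\A$ and preservation by $\phi_{\uH}$ of equalisers of $\phi_{\uH}$-split pairs — is the categorical Fundamental Theorem of Hopf modules: the Galois datum renders these equalisers absolute (split), so they exist because idempotents split in a Cauchy complete category and are then automatically preserved.

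It remains to weave in (g). For (g)$\Rightarrow$(e) I would exhibit the inverse of $\gamma_{\uH}$ explicitly as $Hm\cdot HSH\cdot\delta H$ and check that the two composites reduce to the identity using coassociativity, the entwining axioms of \ref{Ent}, and the antipode equations; this is valid over any base. For the converse, once (c)$\Rightarrow$(d) is established statements (a)--(f) are all equivalent, so both $\gamma_{\uH}$ and $\gamma^{\oH}$ are invertible; then $S:=\ve H\cdot\gamma_{\uH}^{-1}\cdot He$ satisfies one antipode identity directly from $\gamma_{\uH}^{-1}\gamma_{\uH}=1$, while the second follows from invertibility of $\gamma^{\oH}$ (statement (f)). This yields the implication from (e) and (f) to (g) and closes the cycle. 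The two delicate points I anticipate are the identity $\lambda\cdot He=\delta$ feeding the computation of $H^{e}$, and above all the comonadicity of $\phi_{\uH}$, where the interplay of the antipode with the splitting of idempotents in a Cauchy complete category is decisive.
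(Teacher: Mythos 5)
Your decomposition coincides with the paper's own: the proof given there is a web of citations in which (a)$\Leftrightarrow$(c)$\Leftrightarrow$(d) is quoted from \cite[4.2]{MW}, (e)$\Leftrightarrow$(f)$\Leftrightarrow$(g) from \cite[5.5]{MW1}, and (a)$\Leftrightarrow$(e), (b)$\Leftrightarrow$(f) are obtained from Propositions \ref{P.1.3} and \ref{P.1.4} exactly as you do. Your unpacking of (a)$\Leftrightarrow$(c) --- the identification $H^{e}\cong \mathrm{Id}$, hence $i^{*}=U_{\uH}$, $i_{*}=\phi_{\uH}$, $\overline{i}=K_{\uH}$ --- is correct; note only that $\lambda\cdot He=\delta$ is not one of the listed bimonad axioms but does follow from $m\cdot He=1$, $\delta\cdot e=He\cdot e$, the pentagon $\delta\cdot m=Hm\cdot\lambda H\cdot H\delta$ and naturality of $\lambda$, so this step should be spelled out rather than invoked as a "compatibility".

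The genuine gap sits exactly where you locate the difficulty: comonadicity of $\phi_{\uH}$ in (c)$\Rightarrow$(d). The sentence "the Galois datum renders these equalisers absolute (split)" is an assertion, not an argument; it is the entire content of the Fundamental Theorem in this setting, and nothing in your text shows how invertibility of $\gamma_{\uH}$ produces a splitting of an arbitrary $\phi_{\uH}$-split equaliser pair. Moreover the Galois hypothesis is not the mechanism the literature (and this paper, elsewhere) actually uses: since $\varepsilon\cdot e=1$, the unit of $\uH$ is a split monomorphism of functors, and by \cite[Corollary 3.17]{Me} --- invoked verbatim in the proof of Theorem \ref{Main2} --- any monad with split-mono unit on a Cauchy complete category is of effective descent type. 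So $\phi_{\uH}$ is comonadic for \emph{every} bimonad on a Cauchy complete $\A$, independently of conditions (a)--(g); substituting this for your sketch closes the hole and makes Theorem \ref{Gal-entw} (equivalently Theorem \ref{T.1}) deliver (c)$\Leftrightarrow$(d) at once. Your treatment of (g) is likewise only a sketch --- verifying that $Hm\cdot HSH\cdot\delta H$ inverts $\gamma_{\uH}$ and that $S=\varepsilon H\cdot\gamma_{\uH}^{-1}\cdot He$ satisfies \emph{both} antipode identities is the substance of \cite[5.5]{MW1} --- but the formulas you propose are the right ones.
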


\begin{proof} (a), (c) and (d) are equivalent by \cite[4.2]{MW}, while (e), (f) and (g) are equivalent by \cite[5.5]{MW1}. Moreover, (a)$\Leftrightarrow$(e) follows by Proposition \ref{P.1.3} and (b)$\Leftrightarrow$(f) by Proposition \ref{P.1.4}.
\end{proof}

 \begin{thm}\label{ex.1}{\bf Example.} \em Let $(\V,\tau)$ be a \emph{lax braided} monoidal
category (see, for example, \cite{BLV}) and $\textbf{A}=(A, m,e, \delta, \varepsilon)$ a bialgebra in $\V$.
We write $H$ for the endofunctor $A \ot -:\V \to\V$. It is easy to verify directly, using the axioms of lax braidings, that the natural transformation
$\overline{\tau}=\tau_{A}\ot -: HH \to HH$ is a \emph{local prebraiding} (in the sense of \cite{MW1}) and that
$$(H, \overline{m}, \overline{e}, \overline{\delta}, \overline{\varepsilon}),$$ where $\overline{m}=m\ot -,$ $\overline{e}=e\ot -,$
$\overline{\delta}=\delta \ot -$ and $\overline{\varepsilon}=\varepsilon \ot -$, is a $\tau$-bimonad on $\V$. Then, according to
\cite[Section 6]{MW1}, the composite $\widetilde{\tau}=\overline{m}H \cdot H \overline{\tau}\cdot \overline{\delta}H$ is an entwining
from the monad $(H,\overline{m}, \overline{e})$ to the comonad $(H,\overline{\delta}, \overline{\varepsilon})$ that makes
$(H, \overline{m}, \overline{e}, \overline{\delta}, \overline{\varepsilon})$ a bimonad on $\V$. Writing $\V^\mathbf{A}_\mathbf{A}$ for the category
$\V^{\overline{H}}_{\underline{H}}(\widetilde{\tau})$, we get from Theorem \ref{T.2} the following generalisation of
\cite[Theorem 6.12]{MW1}:
\end{thm}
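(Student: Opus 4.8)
The plan is to read the Example as a direct specialisation of Theorem \ref{T.2} to the bimonad $H=A\ot -$, so that, once the bimonad structure is in place, every one of the seven equivalent conditions of that theorem unwinds into a statement purely about the bialgebra $\mathbf{A}$. Thus there are really only two tasks: first, to confirm that $(H,\overline{m},\overline{e},\overline{\delta},\overline{\varepsilon})$ with entwining $\widetilde{\tau}$ genuinely is a bimonad on $\V$ in the sense of \cite[Definition 4.1]{MW1}; and second, to translate conditions \textbf{(d)}, \textbf{(e)}, \textbf{(f)}, \textbf{(g)} of Theorem \ref{T.2} back down to the level of morphisms of $\V$, obtaining the promised generalisation of \cite[Theorem 6.12]{MW1}.

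For the first task I would exploit the fact that $\overline{m},\overline{e},\overline{\delta},\overline{\varepsilon}$ and $\overline{\tau}=\tau_{A,A}\ot -$ are each obtained by tensoring a fixed morphism of $\V$ with the identity on the variable object. Consequently every naturality square and every axiom is itself of the form ``(diagram of morphisms in $\V$) $\ot -$''. The monad axioms for $(\overline{m},\overline{e})$ and the comonad axioms for $(\overline{\delta},\overline{\varepsilon})$ therefore reduce exactly to (co)associativity and (co)unitality of $A$; the local-prebraiding axioms for $\overline{\tau}$ follow from naturality of $\tau$ and the hexagon/unit compatibilities of a lax braiding; and the four bimonad diagrams for $\widetilde{\tau}=\overline{m}H\cdot H\overline{\tau}\cdot \overline{\delta}H$ collapse to the single bialgebra compatibility
$$\delta\cdot m=(m\ot m)(A\ot \tau_{A,A}\ot A)(\delta\ot\delta),$$
together with the evident compatibilities of $e,\varepsilon$ with $\delta,m$. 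This is the routine verification announced in the statement.

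For the second task the key observation is that the distinguished natural transformations of Theorem \ref{T.2} are again of the form ``morphism $\ot -$''. Explicitly $\gamma_{\uH}=\big((A\ot m)(\delta\ot A)\big)\ot -$ and $\gamma^{\oH}=\big((m\ot A)(A\ot\delta)\big)\ot -$, so, using that $\beta\ot -$ is a natural isomorphism iff $\beta$ is one (evaluate at $\II$), conditions \textbf{(e)} and \textbf{(f)} become precisely the invertibility in $\V$ of the two fusion (Galois) morphisms $A\ot A\to A\ot A$. For condition \textbf{(g)}, given a natural $S:H\to H$ one sets $s:=S_{\II}:A\to A$ and evaluates the two antipode identities at $\II$ to get $m(A\ot s)\delta=e\varepsilon=m(s\ot A)\delta$, i.e.\ $s$ is an antipode for $\mathbf{A}$; conversely $s\ot -$ furnishes the required $S$. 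Hence Theorem \ref{T.2} specialises to: for $\V$ Cauchy complete, $K_H:\V\to\V^{\mathbf{A}}_{\mathbf{A}}$ is an equivalence iff $\mathbf{A}$ is a Hopf algebra iff either fusion morphism is invertible.

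The step I expect to be the real obstacle is not the bookkeeping above but the bimonad verification carried out \emph{without} invertibility of the braiding. In \cite[Theorem 6.12]{MW1} one works in a braided (hence $\tau$ invertible) setting; here $\tau$ is only lax, so I must check that $\overline{\tau}$ qualifies as a local prebraiding and that the fourth bimonad diagram commutes using solely the lax-braiding axioms and never the inverse of $\tau$. Confirming that the entwining $\widetilde{\tau}$ and all of Theorem \ref{T.2} survive this weakening is exactly the content that extends the result to lax braided $\V$, and it is where the argument must be watched most carefully.
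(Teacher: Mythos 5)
Your overall strategy coincides with the paper's: the Example is given there with no written proof beyond the assertion that the $\tau$-bimonad axioms are ``easy to verify directly'', a citation of \cite[Section 6]{MW1} for the entwining $\widetilde{\tau}$, and an appeal to Theorem \ref{T.2} for the ensuing Proposition. Your reduction of every axiom to a diagram of the form ``(morphism of $\V$) $\ot$ (identity)'', and your insistence that the inverse of $\tau$ must never be used, is exactly the intended routine verification and exactly what upgrades \cite[Theorem 6.12]{MW1} from braided to lax braided categories. The identifications $\gamma_{\uH}=\bigl((A\ot m)(\delta\ot A)\bigr)\ot -$ and $\gamma^{\oH}=\bigl((m\ot A)(A\ot\delta)\bigr)\ot -$ are correct.

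One step of your unwinding of condition (g) is too quick. Given a natural antipode $S:H\to H$ of the bimonad, evaluating $\overline{m}\cdot SH\cdot\overline{\delta}=\overline{e}\cdot\overline{\varepsilon}$ at $\II$ yields $m\cdot S_A\cdot\delta=e\cdot\varepsilon$, not $m\cdot(s\ot A)\cdot\delta=e\cdot\varepsilon$: in a general monoidal category a natural transformation $A\ot-\to A\ot-$ is not determined by its component at $\II$, so $S_A=s\ot A$ is not automatic from naturality and would need a separate argument. This does not sink the Proposition, because you can route the problematic implication through conditions (e)/(f) instead: a bimonad antipode gives invertibility of $\gamma_{\uH}$ by Theorem \ref{T.2}, hence invertibility of $(A\ot m)(\delta\ot A):A\ot A\to A\ot A$, which is equivalent to $\mathbf{A}$ having an antipode by the result of \cite{M} quoted in \ref{R}; the converse direction, where an antipode $s$ of $\mathbf{A}$ furnishes $S=s\ot-$, is unproblematic. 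With that detour your argument closes, and it is then the same argument the paper intends.
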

\begin{proposition} Let $(\V,\tau)$ be a lax braided category such that $\V$ is  Cauchy complete.
If $\mathbf{A}$ is a bialgebra in  $\V$, then the comparison functor
$$K: \V \to \V^\mathbf{A}_\mathbf{A},\quad V \mapsto
 (A\ot V, m \ot V, \delta \ot V), $$
 is an equivalence of
categories if and only if $\mathbf{A}$ is a Hopf algebra, that is, $\mathbf{A}$ 
has an antipode.
\end{proposition}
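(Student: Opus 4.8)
The plan is to identify the comparison functor $K$ with the functor $K_H$ associated in Theorem \ref{T.2} to the bimonad $\bH$ with underlying functor $H=A\ot-$ produced in Example \ref{ex.1}, and then to extract the statement from the list of characterisations there. Indeed $\V^{\mathbf A}_{\mathbf A}=\V^{\oH}_{\uH}(\widetilde\tau)=\A^H_H$ and $V\mapsto(A\ot V,m\ot V,\delta\ot V)$ is exactly $K_H$. Since $\V$ is Cauchy complete by hypothesis, Theorem \ref{T.2} applies to $\bH$; comparing its clauses (d) and (g) shows that $K=K_H$ is an equivalence of categories if and only if $\bH$ has an antipode, i.e. a natural transformation $S:H\to H$ with $\overline m\cdot HS\cdot\overline\delta=\overline e\cdot\overline\varepsilon=\overline m\cdot SH\cdot\overline\delta$, where $\overline m=m\ot-$, $\overline e=e\ot-$, $\overline\delta=\delta\ot-$, $\overline\varepsilon=\varepsilon\ot-$. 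The whole proposition therefore reduces to showing that $\bH$ admits such an antipode precisely when the bialgebra $\mathbf A$ admits one, namely a morphism $s:A\to A$ with $m\cdot(A\ot s)\cdot\delta=e\cdot\varepsilon=m\cdot(s\ot A)\cdot\delta$.

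For the implication from $\mathbf A$ to $\bH$ I would take a bialgebra antipode $s$ and simply set $S:=s\ot-$. As $\overline m,\overline e,\overline\delta,\overline\varepsilon$ are the bialgebra operations tensored with identities, and as the antipode identities above refer only to these and not to the entwining $\widetilde\tau$, the two equations defining a bimonad antipode collapse componentwise to $\big(m\cdot(A\ot s)\cdot\delta\big)\ot-$ and $\big(m\cdot(s\ot A)\cdot\delta\big)\ot-$, both of which equal $(e\cdot\varepsilon)\ot-=\overline e\cdot\overline\varepsilon$ by the antipode axioms for $\mathbf A$. Notably the braiding $\tau$ plays no role in this verification, since it enters only through $\widetilde\tau$.

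The converse is the substantive part. Given a bimonad antipode $S$, I would put $s:=S_{\II}:A\to A$ under $A\ot\II\cong A$. Evaluating the equation $\overline m\cdot HS\cdot\overline\delta=\overline e\cdot\overline\varepsilon$ at $\II$, and using $(HS)_{\II}=A\ot S_{\II}=A\ot s$, gives immediately $m\cdot(A\ot s)\cdot\delta=e\cdot\varepsilon$. To produce the second identity I would invoke the convolution calculus of \cite[5.5]{MW1}: the natural endotransformations of $H$ form a monoid under convolution with unit $\overline e\cdot\overline\varepsilon$, in which $S$ is the (two-sided, hence unique) convolution inverse of $\mathrm{id}_H$; the identity just obtained says exactly that $s\ot-$ is a one-sided convolution inverse of $\mathrm{id}_H$, so uniqueness of inverses in a monoid forces $S=s\ot-$, and the remaining equation $\overline m\cdot SH\cdot\overline\delta=\overline e\cdot\overline\varepsilon$ then reads off as $m\cdot(s\ot A)\cdot\delta=e\cdot\varepsilon$. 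The only non-formal ingredient here is the associativity and unitality of this convolution, which is where the bimonad compatibility — and so the braiding $\tau$ through $\widetilde\tau$ — actually enters; this is the step I expect to be the main obstacle. Should one prefer to avoid the convolution bookkeeping, the same conclusion follows from clause (e) of Theorem \ref{T.2}: a direct computation gives $\gamma_{\uH}=\big((A\ot m)\cdot(\delta\ot A)\big)\ot-$, so that $K$ is an equivalence exactly when the canonical morphism $(A\ot m)\cdot(\delta\ot A):A\ot A\to A\ot A$ is invertible, and the antipode of $\mathbf A$ can be recovered from its inverse.
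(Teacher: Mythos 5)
Your proposal is correct and follows essentially the same route as the paper, which obtains this proposition directly by applying Theorem \ref{T.2} (clauses (d) and (g)) to the bimonad $H=A\ot-$ of Example \ref{ex.1}. The only difference is that you spell out the correspondence between bimonad antipodes $S:H\to H$ and bialgebra antipodes $s:A\to A$ (via $S_\II$ and convolution-inverse uniqueness, or alternatively via clause (e)), a verification the paper leaves implicit by citing \cite[5.5]{MW1}.
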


\section{Opmonoidal Monads}

\begin{thm}\label{op-mon}{\bf Pre-Hopf monads.} \em
Recall (for example, from \cite{McC}) that an \emph{opmonoidal functor} from a monoidal
category $(\bV, \otimes, \II)$ to a monoidal category $(\bV', \otimes', \II')$ is a triple
$(S, \chi, \theta)$, where $S: \bV \to \bV'$ is a functor, $\chi: S \otimes \to S \otimes' S$
is a natural transformation, and $\theta: S(\II) \to \II'$ is a morphism that are compatible
with the tensor structures. Note that opmonoidal functors $S$ take $\bV$-comonoids (i.e. comonoids in $\bV$) into
$\bV'$-comonoids in the sense that if $\bC=(C, \delta, \varepsilon)$ is a $\bV$-comonoid, then the triple
$S(\bC)=(S(C),\chi_{C,C} \cdot S(\delta) , \theta \cdot S(\varepsilon))$ is a $\bV'$-comonoid.

Recall also (again from \cite{McC}) that an \emph{opmonoidal monad}
on a monoidal category $(\bV, \otimes, \II)$ is a monad $\bT=(T,m, e)$ on the category $\bV$ whose
functor-part $T$ is an opmonoidal endofunctor together with
natural transformations
\begin{center}
$\chi_{V,W} : T(V\ot W)\to T(V)\ot T(W)$ for $V,W\in \V$
\end{center}
and a morphism $\theta: T(\II)\to \II$  that are compatible with the monad structure.

For example, it was pointed out in \cite{BV} that any bialgebra $\textbf{A}=(A,\mu,\eta,\delta,\varepsilon)$ in a braided
monoidal category $(\bV, \otimes, \II)$ with braiding $\tau_{V, W}: V \otimes W \to W \otimes V$ gives rise to an
opmonoidal $\bV$-monad $A \ot -$, where the natural transformation $\chi_{V,W} : A
\otimes V \otimes W \to A \otimes V \otimes A  \otimes W$ is the
composite
$$
\xymatrix{A \otimes V \otimes W \ar[rr]^-{\delta \otimes V \otimes
W}&& A \otimes A \otimes V \otimes W \ar[rr]^-{A \otimes \tau_{A,
V}\otimes W}&& A \otimes V \otimes A \otimes W \, ,}$$ while $\theta:A \to \II$ is just $\varepsilon$.

\bigskip

From now on we shall assume (actually without loss of generality by the coherence
theorem in \cite{Mc}) that all our monoidal categories are strict.

According to  \cite{BLV}, an opmonoidal monad $\bT=(T,m, e)$ on
the monoidal category $(\bV, \ot, \II)$ is {\em left pre-Hopf} \;
   if, for any object $V$ of $\bV$, the composite
$$H^l_{\II,V}:TT(V)=T(\II \ot T(V)) \xrightarrow{\chi_{\II, T(V)}}T(\II )\ot TT(V) \xrightarrow{T(\II) \ot m_V}T(\II) \ot T(V)$$
is an isomorphism, and $\bT$  is {\em right pre-Hopf} provided
 $$H^r_{V, \II}:TT(V)=T(T(V)\ot \II) \xrightarrow{\chi_{T(V),\II}}TT(V)\ot T(\II) \xrightarrow{m_V \ot T(\II) } T(V) \ot T(\II) $$ is an isomorphism.
$\bT$ is called a {\em pre-Hopf monad} if it is both left and right pre-Hopf.

For for any  $(V, h_V)\in \bV_T$ and $W \in \bV$, consider the morphisms
$$\mathbb{H}^r_{V,W}:
 T(V \ot W)\xrightarrow{\chi_{V,W}}T(V) \ot T(W) \xrightarrow{h_V \ot T(W)}V \ot T(W),$$
and for any $V\in \bV$ and $(W, h_W)\in \bV_T$, define
$$\mathbb{H}^l_{V,W}: T(V \ot W)\xrightarrow{\chi_{V,W}}T(V) \ot T(W) \xrightarrow{T(V)\ot h_W}T(V) \ot W.$$

It is shown in \cite{BLV} that, for any $V\in \bV$, $H^r_{-,V}$ (resp. $H^l_{V,-}$) is an isomorphism if and only if $\mathbb{H}^r_{-,V}$ (resp. $\mathbb{H}^l_{V,-}$) is so. In particular, $\bT$ is right (resp. left) pre-Hopf monad if and only if 
for any  $(V, h_V)\in \bV_T$, the morphism $\mathbb{H}^r_{V,\II}$ 
(resp. $\mathbb{H}^l_{\II,V}$) is an isomorphism.
\end{thm}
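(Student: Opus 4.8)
The plan is to recognise both equivalences as instances of Lemma \ref{BLV-2.19}; I treat the right-handed case, the left-handed one being entirely dual. Fix an object $V \in \bV$ and introduce the two functors $F = T(-\ot V)$ and $F' = -\ot T(V)$, both $\bV \to \bV$. For a $\bT$-module $(U,h_U)$ the morphism $\mathbb{H}^r_{U,V}$ goes from $T(U \ot V) = F(U)$ to $U \ot T(V) = F'(U)$, where $U = U_T(U,h_U)$; so the family $(\mathbb{H}^r_{U,V})_{(U,h_U)\in \bV_T}$ is a candidate natural transformation $\mathbb{H}^r_{-,V} : F U_T \to F' U_T$, and the first thing I would do is verify its naturality in $(U,h_U)$.

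The decisive observation is that restricting along the free functor $\phi_T : \bV \to \bV_T$ recovers the fusion operator. Indeed, the free module on $W$ is $(T(W),m_W)$, and the component $\mathbb{H}^r_{T(W),V}$ is by definition $\chi_{T(W),V}$ followed by $m_W \ot T(V)$, which is exactly $H^r_{W,V}$. In the language of Lemma \ref{BLV-2.19} this reads $\mathbb{H}^r_{-,V}\,\phi_T = H^r_{-,V}$.

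With this in place both directions drop out. If $\mathbb{H}^r_{U,V}$ is an isomorphism for every $\bT$-module $(U,h_U)$, then in particular it is so for every free module $(T(W),m_W)$, i.e. $H^r_{W,V}$ is an isomorphism for every $W$. Conversely, if $H^r_{-,V} = \mathbb{H}^r_{-,V}\,\phi_T$ is an isomorphism, then Lemma \ref{BLV-2.19}, applied with $\tau = \mathbb{H}^r_{-,V}$, gives that $\mathbb{H}^r_{-,V}$ is itself an isomorphism. The left-handed statement is obtained in the same way from the functors $T(V \ot -)$ and $T(V) \ot -$. Finally, the concluding assertion follows by specialising to $V = \II$: by definition $\bT$ is right pre-Hopf exactly when $H^r_{W,\II}$ is invertible for every $W \in \bV$, and by the equivalence just established this happens exactly when $\mathbb{H}^r_{U,\II}$ is invertible for every $(U,h_U)\in \bV_T$.

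I expect the naturality check of the first step to be the only real work, and hence the main (if routine) obstacle. It amounts to taking a morphism $f : (U,h_U) \to (U',h_{U'})$ of $\bT$-modules and combining two facts: naturality of $\chi$ in its first variable lets one slide $T(f \ot V)$ through $\chi$ to produce $T(f)\ot T(V)$, while the defining identity $f \cdot h_U = h_{U'}\cdot T(f)$ of a module morphism then converts $(h_{U'}\cdot T(f))\ot T(V)$ into $(f\cdot h_U)\ot T(V)$; together these give the commutativity of the naturality square for $\mathbb{H}^r_{-,V}$.
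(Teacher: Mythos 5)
Your argument is correct and is essentially the intended one: the paper itself gives no proof here (it defers to [BLV]), but the equivalence is exactly an application of the quoted Lemma \ref{BLV-2.19} to $\tau=\mathbb{H}^r_{-,V}:T(-\ot V)\,U_T\to (-\ot T(V))\,U_T$, whose restriction along $\phi_T$ is $H^r_{-,V}$ since $\mathbb{H}^r_{T(W),V}=(m_W\ot T(V))\cdot\chi_{T(W),V}=H^r_{W,V}$. Your naturality check and the dual left-handed case are also fine, so nothing is missing.
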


\begin{thm}\label{BV-mod}{\bf Entwined modules.} \em
Let $(\bV, \ot, \II)$ be a monoidal category and let $\bT=(T,m, e)$ be an opmonoidal monad on $\bV$.
As the functor $T$ is opmonoidal, for any $\bV$-comonoid $\bC=(C, \delta, \varepsilon)$, the triple
$T(\bC)=(T(C),\chi_{C,C} \cdot T(\delta) , \theta_\II \cdot T(\varepsilon))$ is also a $\bV$-comonoid.
In particular, the triple $T(\II)=(T(\II),\; \chi_{\II,\,\II},\;
\theta)$ is a $\V$-comonoid corresponding to the trivial $\bV$-comonoid $\textbf{I}=(\II, 1_{\II}, 1_{\II})$.
Given a $\bV$-comonoid $\bC$, we write $\bG_{\bC}$ for the comonad on $\bV$ whose functor part is
$G_{\bC}=- \otimes C$.

The compatibility axioms for $\bT$ ensure that
the natural transformation
$$\lambda^{\bC}_-:=H^l_{-,C}=(T(-) \otimes m_C)\cdot \chi_{-,\, T(C)}:
    T(-\ot T(C)) \to T(-)\ot T(C) 
$$
is a mixed distributive law (entwining) from the monad $\bT$ to the comonad
$\bG_{T(\bC)}$ and the diagrams in \ref{Ent} come out as

 $$\xymatrix{ & V \!\!\ot T(C) \ar[dl]_{e_{V \ot T(C)}}  \ar[d]^-{e_V \ot T(C)} \\
T(V \!\!\ot T(C)\!) \ar[r]_-{\lambda^{\bC}_{V,T(\bC)}}  & T(V)\!\! \ot \!T(C),} \quad
\xymatrix{
**[l]T(V \!\!\ot \!T(C)) \ar[r]^-{T(V \ot T(\varepsilon))} \ar[d]_-{\lambda^{\bC}_{V,T(\bC)}}
     & T(V \!\!\ot \!T(\II))\ar[dr]^{T(V \ot \theta)} \\
  **[l]T(V) \!\!\ot \!T(C)\ar[r]_-{T(V) \ot T(\varepsilon)} &
   T(V) \!\!\ot \! T(\II)\ar[r]_{\quad T(V) \ot \theta} &  T(V) ,}$$

$$\xymatrix{
T(V\ot T(C))\ar[dd]_{\lambda^{\bC}_{V,T(\bC)}}\ar[rr]^-{T(V\ot T(\delta))}&&
  T(V\ot T(C\ot C))\ar[rr]^-{T(V \ot \chi_{C,C})}&& T(V\ot T(C) \ot T(C))
                   \ar[d]^{\lambda^{\bC}_{V\ot T(\bC), T(C)}} \\
 & &   &   &  T(V \ot T(C))\ot T(C) \ar[d]^{\lambda^{\bC}_{V,T(\bC)}\ot T(C)}\\
T(V)\ot T(C) \ar[rr]^{T(V)\ot T(\delta)} & &
 T(V) \ot T(C\ot C)\ar[rr]^-{T(V) \ot \chi_{C,C}} & & T(V) \ot T(C) \ot T(C),}
$$

$$\xymatrix{
**[l]T(T(V \ot T(C))) \ar[r]^-{T (\lambda^{\bC}_{V,T(\bC))}} \ar[d]_{m_{V\ot T(\bC)}} &
       T(T(V) \ot T(C)) 
        \ar[rr]^{\lambda^{\bC}_{T(V),T(\bC)}} & &
      TT(V) \ot T(C)\ar[d]^{m_V \ot T(C)}  \\
T(V \ot T(C)) \ar[rrr]^{\lambda^{\bC}_{V,T(\bC)}} & & & T(V)\ot T(C) .} $$

The {\em entwined $T(\bC)$-modules} are objects $V \in \V$ with a
$T$-module structure $h:T(V)\to V$ and a $T(\bC)$-comodule structure
$\rho:V\to V\ot T(C)$ inducing commutativity of the diagram
$$\xymatrix{
 T(V) \ar[rr]^h \ar[d]_{T(\rho)} & &
        V \ar[rr]^{\rho} & &  V\ot T(C) \\
T(V\ot T(C)) \ar[rr]_{\chi_{V,T(C)}}& & T(V)\otimes TT(C)
  \ar[rr]_{T(V)\otimes m_C}& &
                  T(V)\otimes T(C) \ar[u]_{h \otimes T(C)}.}$$
They form a category in an obvious way which we denote by $\V^{T(\bC)}_T$. It is clear that
$\V^{T(\bC)}_T$ is just the category $\bV_T^{G_{T(\bC)}}(\lambda_\bC)=(\bV_T)^{\widehat{G_{T(\bC)}}}.$

When $\bC=\textbf{I}$ is the trivial $\bV$-comonad, the entwined $T(\textbf{I})$-modules
are named {\em right Hopf \,$T$-modules} in \cite[Section 4.2]{BV}
(also \cite[6.5]{BLV}).

There is another description of the category $\V^{T(\bC)}_T$. Since $\textbf{T}$ is opmonoidal, $\V_T$ is a monoidal
category, and the
functor $\phi_T:\bV \to \bV_T$ is also opmonoidal. Then, for any $\V$-comonoid $\textbf{C}$, the triple
$$\phi_T(\bC)=((T(C),m_C),\,\chi_{C,C} \cdot T(\delta) ,\, \theta_\II \cdot T(\varepsilon)))$$ is a $\bV_T$-comonoid and
it is easy to see that the comonad $\widehat{G_{T(\bC)}}$ is just the comonad $\textbf{G}_{\phi_T(\bC)}$ and that the
category $\V^{T(\bC)}_T$ is just the category $(\V_T)^{\phi_T(\bC)}$. In particular, if
$\phi_T(\textbf{I})=((T(\II),m_{\II})\; \chi_{\II,\,\II},\;
\theta)$ is a $\V_T$-comonoid  corresponding to the trivial $\bV$-comonoid $\textbf{I}=(\II, 1_{\II}, 1_{\II})$,
then $\widehat{G_{T(\textbf{I})}}=\textbf{G}_{\phi_T(\textbf{I})}$ and $\V^{T(\textbf{I})}_T=(\V_T)^{\phi_T(\textbf{I})}$.

\begin{thm}\label{R}{\bf Remark.} \em
It follows from the results of \cite[5.13]{MW} that, for an arbitrary bialgebra $\textbf{A}=(A,\mu,\eta,\delta,\varepsilon)$
in a braided monoidal category $(\bV, \ot, \II)$, the following are equivalent:
\begin{enumerate}
  \item [(i)] the natural transformation
$$\lambda^{\textbf{I}}_{-} =H^l_{-,\II}:A \ot -\ot A \to A \ot -\ot A,$$ corresponding to the
opmonoidal $\bV$-monad $A \ot -$, is an isomorphism;
  \item [(ii)] the composite
$$\lambda^{\textbf{I}}_\II=H^l_{\II,\II}:A \ot A \to A \ot A$$ is an isomorphism;
  \item [(iii)] the composite
$$A\ot A \xrightarrow{\delta \ot A} A\ot A \ot A \xrightarrow{A \ot m} A \ot A$$
is an isomorphism.
\end{enumerate}

\noindent Recall (for example from \cite{M}) that Condition (iii) is in turn equivalent to saying that
$\textbf{A}$ has an antipode, i.e. $\textbf{A}$ is a Hopf algebra. It follows from the equivalence
(i)$\Leftrightarrow$ (iii) that, for any $V \in \bV$, the natural transformation $H^l_{-, V}$,
which is easily seen to be just the natural transformation $H^l_{-, \II}\ot V$, is an
isomorphism, or equivalently, the monad  $A \ot -$ is left Hopf, if and only if $\textbf{A}$ is a Hopf algebra.
Moreover, if the monad $A \ot -$ is left pre-Hopf (and hence, in particular, the morphism $H^l_{\II,\II}$
is an isomorphism), then according to the equivalence (ii)$\Leftrightarrow$ (iii),
$\textbf{A}$ is a Hopf algebra. Putting this information together and using that, quite obviously,
any left Hopf monad is left pre-Hopf, we have proved that
the following are equivalent:
\begin{enumerate}
  \item [(i)] $\textbf{A}$ is a Hopf algebra;
  \item [(ii)] the monad  $A \ot -$ is left pre-Hopf;
  \item [(iii)] the monad  $A \ot -$ is left Hopf.
\end{enumerate} this result may be compared with \cite[Proposition 5.4(a)]{BLV}.
\end{thm}

\begin{thm}\label{grouplike}{\bf Grouplike morphisms.} \em
Suppose now that the $\bV$-comonoid $\bC$ allows for a grouplike element
$g:\II \to C$ (see  \cite{M}, \cite{MW1}).
Then direct inspection shows that the composite 
$$\overline{g}:\II \xrightarrow{\;g\;} C \xrightarrow{e_C} T(C)$$ is a
grouplike element for the $\bV$-comonoid $T(\bC)$ implying that the natural transformation
$$-\ot \overline{g}:1 \to - \ot T(C)$$ is a grouplike morphism. Thus the results of \cite{M} apply.
In particular, the composite
$$T(-) \xrightarrow{T(- \ot \overline{g})}T(- \otimes T(C)) \xrightarrow{\lambda^{\bC}_-} T(-) \otimes T(C)$$
gives the structure $\vartheta: \phi_T \to \phi_T \widehat{G_{T(\bC)}}$ of a $\widehat{G_{T(\bC)}}$-comodule
on the functor $\phi_T :\bV \to \bV_T.$ Since in the diagram
$$
\xymatrix{
T(-) \ar[rr]^-{T(-\ot g)}\ar[d]_{\chi_{-, \II}}&&T(-\ot C) \ar[rr]^-{T(-\ot e_C)}\ar[d]^{\chi_{-, C}}
&& T(-\ot T(C))\ar[rr]^-{\chi_{-, T(C)}}&& T(-)\ot T^2(C)\ar[d]^{T(-)\ot m_C}\\
T(-)\ot T(\II)\ar[rr]_{T(-)\ot T(g)}&& T(-)\ot T(C)\ar[rrrru]|{T(-)\ot T(e_C)} \ar@{=}[rrrr]&&&& T(-) \ot T(C)}
$$
the rectangle and the top triangle commute by naturality of $\chi$, while the bottom triangle commutes since $e$
is the unit for the multiplication $m$, it follows that $\vartheta$ is just the natural transformation
$$T(-) \xrightarrow{\chi_{-,\II}}T(-) \otimes T(\II) \xrightarrow{T(-) \ot T(g)} T(-) \otimes T(C).$$
It then follows that the assignment $V \longrightarrow ((T(V), m_V), (T(V) \ot T(g))\cdot \chi_{V,\II})$
yields a functor $K_{g,\bC}:=K_{g,G_{T(\bC)}}:\bV \to \V^{G_{T(\bC)}}_T$ leading to the commutative diagram
$$\xymatrix{\V \ar[rr]^-{K_{g,\bC}} \ar[rrd]_{\phi_T}& &
\V^{T(\bC)}_T=(\bV_T)^{\widehat{G_{T(\bC)}}} \ar[d]^{ U^{\widehat{G_{T(\bC)}}} }\\
 & & \V_T\,.}$$
One then calculates that for any  $(V, h_V)\in V_T$, the $(V, h_V)$-component
of the induced comonad morphism $S_{K_{g,\bC}}:\phi_TU_T \to \widehat{G_{T(\bC)}}$
is the composite
$$\xymatrix{T(V) \ar[r]^-{\chi_{V,\,\II}}& T(V) \otimes T(\II)
\ar[rr]^-{T(V) \otimes T(g)}&& T(V) \otimes T(C) \ar[rr]^{h_V\ot T(C)} && V \otimes T(C).}$$
In particular, when $\bC$ is the trivial $\bV$-comonoid $\textbf{I}$ together with the evident grouplike morphism
$1_\II : \II \to \II$, the morphism $\chi_{-,\II}:T(-) \to T(-)\ot T(\II)$ gives the structure
$\vartheta': \phi_T \to \phi_T \widehat{G_{T(\textbf{I})}}$ of a $\widehat{G_{T(\textbf{I})}}$-comodule
on the functor $\phi_T :\bV \to \bV_T,$ and then one has the following commutative diagram
$$\xymatrix{\V \ar[rr]^-{K_{1_{\II},\textbf{I}}} \ar[rrd]_{\phi_T}& &
\V^{T(\textbf{I})}_T=(\bV_T)^{\widehat{G_{T(\mathcal{\textbf{I}})}}} \ar[d]^{ U^{\widehat{G_{T(\mathcal{\textbf{I}})}}} }\\
& & \V_T }$$
with the comparison functor $K_{1_{\II},\textbf{I}}(V)=((T(V), m_V), \chi_{V,\, I})$. Moreover, for any  $(V, h_V)
\in V_T$, the $(V, h_V)$-component of the induced comonad morphism $S_{K_{1_{\II},\textbf{I}}}:\phi_TU_T \to \widehat{G_{T(\textbf{I})}}$ is the
composite
$$\xymatrix{T(V) \ar[r]^-{\chi_{V,\,\II}}& T(V) \otimes T(\II)
\ar[rr]^-{h_V \otimes T(\II)}&& V \otimes T(\II).}$$
Comparing now $\vartheta$ and $\vartheta'$ gives:
\begin{equation}\label{E.1}
\vartheta=(T(-) \otimes T(g)) \cdot \vartheta'
\end{equation} while comparing $S_{K_{g,C}}$ and $S_{K_{e_{\II},\II}}$ and using that
$$(h_V\ot T(C))\cdot (T(V) \otimes T(g))=(V\ot T(g))\cdot (h_V \otimes T(\II))$$ by
bifunctoriality of the tensor product, gives:
\begin{equation}\label{E.2}
S_{K_{g,C}}=(- \otimes T(g)) \cdot S_{K_{e_{\II},\II}}.
\end{equation}
\end{thm}

It is easy to see that $S_{K_{e_{\II},\II}}$ just the composite $\mathbb{H}^r_{V,\II}$.
This yields in particular a fact proved in \cite[Lemma 6.5]{BLV}:

\begin{lemma} \label{p.3.1} The natural transformation $\mathbb{H}^r_{-,\II}:\emph{T}(-)\to -\otimes \emph{T}(\II)$
is a morphism of comonads $\phi_\emph{T}\emph{U}_\emph{T} \to \widehat{\emph{G}_{\emph{T}(\emph{\textbf{I}})}}.$
\end{lemma}

We already know (see \ref{op-mon}) that $\bT$ is a right pre-Hopf monad iff the natural transformation
$\mathbb{H}^r_{-,\II}$ (or, equivalently, the comonad morphism $S_{K_{e_{\II},\II}}$) is an isomorphism.
It now follows from Proposition \ref{P.1.3}:

\begin{proposition} \label{p.3.2} An opmonoidal monad $\bT$ on $\bV$ is a right pre-Hopf monad if and only if $\bT$ is
$\bG_{\emph{T}(\emph{\textbf{I}})}$-Galois.
\end{proposition}

This allows us to present an improved version of \cite[Theorem 6.11]{BLV}.

\begin{theorem}\label{th.1} For an opmonoidal monad $\bT$ on a monoidal category $(\bV, \otimes, \II)$, the following are equivalent:
\begin{blist}
  \item   the functor $K_{1_{\II},\emph{\textbf{I}}}: \bV \to
 \V^{\emph{T}(\emph{\textbf{I})}}_{\emph{T}}$ is an
 equivalence of categories;
  \item   \begin{rlist}
                  \item $\bT$ is $G_{\emph{T}(\emph{\textbf{I})}}$-Galois,
                  \item $\bT$ is of effective descent type.
                \end{rlist}
\end{blist}
\end{theorem}
\begin{proof} The assertion follows by Proposition \ref{P.1.2}.
\end{proof}

\begin{theorem}\label{th.2} Let $\bT=(T,m,e)$ be an opmonoidal monad on a monoidal category
$(\bV, \otimes, \II)$ and $\bC=(C, \delta,\varepsilon)$ a $\bV$-comonoid with a grouplike element $g: \II \to C$.
The following are equivalent:
\begin{blist}
  \item   the functor $K_{g,\bC}:\bV  \to
(\V_{\emph{T}})^{^{\phi_\emph{T}(\emph{\textbf{C})}}}$
is an equivalence of categories;
  \item   $K_{1_{\II},\emph{\textbf{I}}}: \bV \to
     (\V_{\emph{T}})^{^{\phi_\emph{T}(\emph{\textbf{I})}}}$
is an equivalence of categories
  and $g$ is an isomorphism;
  \item   \begin{rlist}
                  \item $\bT$ is $\emph{G}_{\emph{T}(\emph{\textbf{I})}}$-Galois,
                  \item $\bT$ is of effective descent type,
                  \item $g$ is an isomorphism.
                \end{rlist}
\end{blist}
\end{theorem}
\begin{proof} Note first that, being grouplike morphisms, $g$ and $T(g)$ are
 split monomorphisms. Hence
the natural transformation $-\otimes T(g): G_{T(\II)} \to G_{T(C)}$ is also a split monomorphism.

Now, if the functor $K_{g,C}:\bV \to \V^{\emph{T}(\emph{\textbf{C})}}_{\emph{T}}=(\V_T)^{\widehat{G_{T(\bC)}}}$ is an equivalence of categories, then it follows
from Proposition \ref{P.1.2} that the monad $\bT$ is of effective descent type and the comonad morphism
$S_{K_{g,\bC}}:\phi_TU_T \to \widehat{G_{T(C)}}$ is an isomorphism. Since
$S_{K_{g,C}}=(- \otimes T(g)) \cdot S_{K_{e_{\II},\II}}$ by  (\ref{E.2}) and since the natural transformation
$-\otimes T(g): G_{T(\textbf{I})} \to G_{T(\bC)}$ is a split monomorphism, it follows that the natural
transformations $- \otimes T(g)$ and $S_{K_{e_{\II},\II}}$ are both isomorphisms. Then, in particular,
$T(g)$ is an isomorphism. Since $\bT$ is of effective descent type, the functor $T$ is conservative (see, \cite[Proposition 3.11]{Me}).
Thus $g$ is also an isomorphism. Since $\bT$ is of effective descent type and since $S_{K_{e_{\II},\II}}$ is
an isomorphism, it follows from Proposition \ref{P.1.2} that the functor $K_{e_{\II},\II}:
\bV \to \V^{\emph{T}(\emph{\textbf{\textbf{I}})}}_{\emph{T}}$ is an equivalence of categories. Hence (a) implies (b). Since (b) trivially
implies (a), (a) and (b) are equivalent. Finally, (b) and (c) are equivalent by Theorem \ref{th.1}.
\end{proof}
\end{thm}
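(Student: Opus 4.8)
The plan is to show that $\lambda^{\bC}_- = H^l_{-,C}$ satisfies the four axioms of \ref{Ent} for the monad $\bT$ and the comonad $\bG_{T(\bC)} = -\ot T(C)$, and then to obtain the two descriptions of $\V^{T(\bC)}_T$ from the general correspondence recorded there. First I would note that $\lambda^{\bC}_-$ has the correct type of an entwining $TG_{T(\bC)} \to G_{T(\bC)}T$: indeed $\chi_{V,T(C)} : T(V\ot T(C)) \to T(V)\ot T^2(C)$ followed by $T(V)\ot m_C$ lands in $T(V)\ot T(C)$. The four displayed diagrams are then nothing but the abstract squares of \ref{Ent} into which one has substituted the comultiplication $V\ot(\chi_{C,C}\cdot T(\delta))$ and counit $V\ot(\theta\cdot T(\ve))$ of $\bG_{T(\bC)}$ together with the formula for $\lambda^{\bC}_-$; the substance of the statement is their commutativity.

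I would establish this most efficiently by a lifting argument rather than by four separate chases. Since $\bT$ is opmonoidal, $\bV_T$ is monoidal, $U_T$ is strong monoidal and $\phi_T$ is opmonoidal, this being exactly the monoidal structure on $\bV_T$ used in the alternative description given below. Being opmonoidal, $\phi_T$ carries the comonoid $\bC$ to the $\bV_T$-comonoid $\phi_T(\bC) = ((T(C),m_C),\,\chi_{C,C}\cdot T(\delta),\,\theta\cdot T(\ve))$ (cf. the general remark in \ref{op-mon}), so $\bG_{\phi_T(\bC)} := -\ot\phi_T(\bC)$ is a comonad on $\bV_T$. As $U_T$ is strong monoidal and sends $\phi_T(\bC)$ to $T(C)$, one has $U_T\bG_{\phi_T(\bC)} = G_{T(\bC)}U_T$ with comultiplication and counit matching those of $\bG_{T(\bC)}$; that is, $\bG_{\phi_T(\bC)}$ is a lifting of $\bG_{T(\bC)}$ to $\bV_T$ in the sense of \ref{Ent}. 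By the bijection of \ref{Ent} this lifting corresponds to a unique entwining $\lambda : TG_{T(\bC)} \to G_{T(\bC)}T$; reading off the $T$-action on $(a,h_a)\ot\phi_T(\bC)$, which by the monoidal structure of $\bV_T$ is $(h_a\ot m_C)\cdot\chi_{a,T(C)} = (h_a\ot T(C))\cdot\lambda^{\bC}_a = G_{T(\bC)}(h_a)\cdot\lambda^{\bC}_a$, and comparing it with the formula $\widehat{G}(a,h_a) = (G(a),G(h_a)\cdot\lambda_a)$ of \ref{Ent}, forces $\lambda = \lambda^{\bC}$. This single computation shows at once that $\lambda^{\bC}$ is an entwining and that $\widehat{G_{T(\bC)}} = \bG_{\phi_T(\bC)}$. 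The identifications of module categories then follow formally: $\V^{T(\bC)}_T$ is by definition $\bV_{T}^{G_{T(\bC)}}(\lambda^{\bC})$, which \ref{Ent} identifies with $(\bV_T)^{\widehat{G_{T(\bC)}}}$, and the equality just obtained rewrites this as $(\bV_T)^{\phi_T(\bC)}$.

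Should one instead want the four diagrams checked on the nose, I would verify them directly from the opmonoidal-monad compatibility axioms. The unit square reduces to the identity $\chi\cdot e = e\ot e$ together with the monad unit law $m_C\cdot e_{T(C)} = 1$; the counit square to the opmonoidal counit axiom $(T(V)\ot\theta)\cdot\chi_{V,\II} = 1$ combined with $\theta\cdot m_\II = \theta\cdot T\theta$ and the naturality of $m$ and $\chi$; and the multiplication square to the opmonoidal law $\chi\cdot m = (m\ot m)\cdot\chi\cdot T\chi$, the associativity $m_C\cdot m_{T(C)} = m_C\cdot T(m_C)$ and naturality of $\chi$, both sides collapsing to $(m_V\ot(m_C\cdot T(m_C)))\cdot\chi_{T(V),T^2(C)}\cdot T(\chi_{V,T(C)})$. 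I expect the main obstacle to be the comultiplication square (the large third diagram): there the comonoid comultiplication $\chi_{C,C}\cdot T(\delta)$ of $T(\bC)$ has to be carried across $m_C$ and through $\lambda^{\bC}$, and matching the two sides requires interleaving the coassociativity of $\chi$ with its naturality, the naturality of $m$ along $\delta$, and once more the opmonoidal behaviour of $m$. This is precisely the square that the lifting argument of the previous paragraph renders automatic, which is why I would present that route as the main proof and regard the four explicit diagrams as its unfolding.
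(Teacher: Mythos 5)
Your proposal is correct, but it organizes the argument genuinely differently from the paper. The paper treats the two assertions separately and in the opposite order: the entwining property of $\lambda^{\bC}=H^l_{-,C}$ is presented as a direct consequence of the opmonoidal-monad compatibility axioms (i.e.\ the four displayed diagrams are to be verified by hand, exactly along the lines of your fallback paragraph, and your attribution of which axiom settles which square is accurate), and only afterwards is the monoidal structure of $\bV_T$ invoked to record, as a separate ``easy to see'' observation, that $\widehat{G_{T(\bC)}}=\bG_{\phi_T(\bC)}$ and $\V^{T(\bC)}_T=(\V_T)^{\phi_T(\bC)}$. You instead build the lifting $\bG_{\phi_T(\bC)}$ first and recover the entwining from the bijection of \ref{Ent}; this is legitimate, since the monoidal structure on $\bV_T$ with $U_T$ strict monoidal and $\phi_T$ opmonoidal is precisely the content of the opmonoidal-monad axioms (a standard fact going back to Moerdijk and McCrudden, and one the paper itself states), and it yields both claims in a single stroke while making the troublesome comultiplication square automatic --- at the price of outsourcing the compatibility axioms to that cited equivalence rather than using them directly. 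One step you should make explicit: the conclusion that the equality of liftings ``forces $\lambda=\lambda^{\bC}$'' does not follow from a pointwise cancellation of $G(h_a)$ in $G(h_a)\cdot\lambda_a=G(h_a)\cdot\lambda^{\bC}_a$; one must evaluate at the free algebras $(T(a),m_a)$ and precompose with $T(e_a\ot T(C))$, using $m\cdot Te=1$ and naturality of both transformations --- the standard recovery of an entwining from its lifting. Since both $\lambda$ and $\lambda^{\bC}$ are natural and satisfy $G(m_a)\cdot\lambda_{T(a)}\cdot TG(e_a)=\lambda_a$, this closes the gap routinely, so the argument stands.
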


\begin{thm}\label{submonad}{\bf Galois group-like morphisms.} \em
We will assume from now on that our monoidal category $(\bV, \ot,\II)$ admits equalisers.

Let $\bT=(T,m,e)$ be an opmonoidal monad on $\bV$, $\textbf{C}$ a $\bV$-comonoid and $g:\II \to \bC$ a
grouplike element. Since $\bV$ has equalisers, one can consider the $\bV$-monad $\bT^{-\ot \overline{g}}$.
We write $\bT^g$ for this monad. Let us say that $g:\II \to C$ is a Galois grouplike element if the induced
grouplike morphism $- \ot \overline{g}:1 \to G_{T(\textbf{C})}$ is Galois. In particular, the grouplike element
$1_\II :\II \to \II$ is Galois if the grouplike morphism $-\ot \overline{1_{\II}}=- \ot e_C:1 \to G_{T(\textbf{I})}$
is Galois.

\bigskip

Specialising now Theorem \ref{Gal-entw} to the present situation gives:

\begin{theorem}\label{Gal-Entw}Let $\bT$ be an opmonoidal monad on a monoidal category $(\bV, \ot, \II)$
and $\textbf{C}$ a $\bV$-comonoid. Suppose that $g: \II \to C$ is a grouplike element
such that the corresponding functor $i^*:\bV_{T} \to \bV_{T^g}$
admits a left adjoint functor $i_*:\bV_{T^g} \to \bV_T$.
Then the comparison functor $\overline{i}:\bV_{T^g}\to \bV_{T^g}\to \bV_T^{T(\bC)}=(\bV_T)^{\widehat{G_{T(\bC)}}}$
is an equivalence of categories if and only if $g$ is a Galois grouplike element and the functor
$i_*$ is comonadic.
\end{theorem}

Direct inspection shows (see also \cite[Section 5]{MW}) that $T^{1_{\II}}$ is given by the equaliser
$$\xymatrix{ T^{1_{\II}}(-) \ar[r]&  T(-)
\ar@{->}@<0.5ex>[rr]^-{T(-) \ot e_{\II}} \ar@{->}@<-0.5ex>
[rr]_-{\chi_{-, \II}}&& T(-)\otimes T(\II),}$$ while $T^g$ is given by the equaliser
$$\xymatrix{ T^g(-) \ar[r]&  T(-)
\ar@{->}@<0.5ex>[rr]^-{T(-) \ot e_{\II}} \ar@{->}@<-0.5ex>
[rr]_-{\chi_{-, \II}}&& T(-)\otimes T(\II) \ar[rr]^-{T(-) \ot T(g)}&& T(-) \ot T(C).}$$  Since $g$, being a
grouplike morphism, is a split monomorphism, so too is the natural transformation $T(-) \ot T(g)$. It follows that
the monad $\textbf{T}^{1_{\II}}$ can be identified with the monad $\textbf{T}^g$. Since $g: \II \to C$ is nothing but a comonoid
morphism from the trivial $\bV$-comonoid $\textbf{I}$
to the $\bV$-comonoid $\bC$ and since any opmonoidal functor preserves comonoid morphisms,
$T(g): T(\II) \to T(C)$ can be seen as a morphism of $\bV$-comonoids
$T(\textbf{I}) \to T(\bC)$. It is then easy to see  that the induced morphism of $\bV$-comonads
$- \otimes T(g): G_{T(\II)} \to G_{T(C)}$ can be lifted to a morphism
$\widehat{- \ot T(g)}:\widehat{G_{T(\textbf{I})}}\to \widehat{G_{T(\bC)}}$ of $\bV_T$-comonads. Using
that $\vartheta=(T(-) \otimes T(g)) \cdot \vartheta'$ by (\ref{E.1}), it follows from \cite[Lemma 3.9]{MW}
that one has the following commutative diagram

$$
\xymatrix{i_* \ar[rr]^{\alpha}\ar[dr]_{\alpha'}&& \widehat{G_{T(\bC)}} \cdot i_*\\
&\widehat{G_{T(\textbf{I})}}\cdot i_*\ar[ru]_{\widehat{-\ot T(g)}\cdot i_*}} $$ where $i:T^g=T^{1_{\II}}\to T$
is the canonical inclusion, while $\alpha$ (resp. $\alpha'$) is a left
$\widehat{G_{T(\bC)}}$-comodule (resp. $\widehat{G_{T(\textbf{I})}}$-comodule) structure on $i_*$. It then follows that
one also has commutativity in

\begin{equation}\label{D.2}
\xymatrix{\widetilde{G_{T(\bC)}} \ar[rr]^{S_{\overline{i}}}\ar[dr]_{S_{\overline{i}}}&& \widehat{G_{T(\bC)}}\\
&\widehat{G_{T(\textbf{I})}}\ar[ru]_{\widehat{-\ot T(g)}}, } \end{equation}
where $\widetilde{G_{T(\bC)}}$ denotes the comonad generated by the adjunction $i^* \vdash i_*$ (see \ref{G.E}).
\end{thm}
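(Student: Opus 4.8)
The plan is to obtain the commutativity of (\ref{D.2}) by transporting the comodule-structure triangle displayed just above it through the explicit formula that manufactures $S_{\overline{i}}$ from a comparison functor. I would begin by fixing the ambient data: since $g$ is a split monomorphism the monad $\bT^{g}$ is identified with $\bT^{1_{\II}}$, so the two comparison situations (for $\bC$ and for the trivial comonoid $\textbf{I}$) share one and the same left adjoint $i_{*}\dashv i^{*}$, and hence the same generated comonad $\widetilde{G_{T(\bC)}}=i_{*}i^{*}$ on $\bV_{T}$. The two arrows labelled $S_{\overline{i}}$ in (\ref{D.2}) are then the comonad morphisms attached by \ref{Cm.F} to the two comparison functors $\overline{i}$: they have targets $\widehat{G_{T(\bC)}}$ and $\widehat{G_{T(\textbf{I})}}$ and arise from the two $\widehat{G_{T(\bC)}}$- and $\widehat{G_{T(\textbf{I})}}$-comodule structures $\alpha$ and $\alpha'$ on the fixed functor $i_{*}$; I write $S_{\overline{i}}^{\bC}$ and $S_{\overline{i}}^{\textbf{I}}$ for the two morphisms respectively.

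Next I would write both morphisms down via the construction $t_{\oF}=G\sigma\cdot\beta R$ of \ref{Cm.F}, applied to the comodule functor $i_{*}$ with right adjoint $i^{*}$ and counit $\sigma:i_{*}i^{*}\to 1$:
\[
S_{\overline{i}}^{\bC}=\widehat{G_{T(\bC)}}\,\sigma\cdot\alpha\, i^{*},\qquad
S_{\overline{i}}^{\textbf{I}}=\widehat{G_{T(\textbf{I})}}\,\sigma\cdot\alpha'\, i^{*}.
\]
The heart of the argument is then a short diagram chase. Composing $S_{\overline{i}}^{\textbf{I}}$ on the left with the $\bV_{T}$-comonad morphism $\widehat{-\ot T(g)}$ and using the interchange law to slide $\widehat{-\ot T(g)}$ past the whiskered counit $\widehat{G_{T(\textbf{I})}}\,\sigma$ replaces the prefactor by $\widehat{G_{T(\bC)}}\,\sigma\cdot(\widehat{-\ot T(g)}\,i_{*}i^{*})$; gathering the remaining whiskerings and invoking the comodule triangle $\alpha=(\widehat{-\ot T(g)}\cdot i_{*})\cdot\alpha'$ rewrites $(\widehat{-\ot T(g)}\,i_{*}\cdot\alpha')\,i^{*}$ as $\alpha\, i^{*}$. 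The result is exactly $\widehat{G_{T(\bC)}}\,\sigma\cdot\alpha\, i^{*}=S_{\overline{i}}^{\bC}$, which is the asserted identity $S_{\overline{i}}^{\bC}=\widehat{-\ot T(g)}\cdot S_{\overline{i}}^{\textbf{I}}$.

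I expect this to be bookkeeping rather than a genuine difficulty. The one point requiring care is that the passage from a comparison functor to its associated comonad morphism $t_{\oF}$ of \ref{Cm.F} is natural in the comodule structure, so that the morphism of comodule structures on the fixed functor $i_{*}$ induced by $\widehat{-\ot T(g)}$ goes over to the triangle (\ref{D.2}); this naturality is precisely the interchange-law step above. I would also record that all three edges of (\ref{D.2}) are comonad morphisms --- $\widehat{-\ot T(g)}$ being one by the lifting recalled just before --- so that, the forgetful passage to underlying natural transformations being faithful, it suffices to check the identity of underlying natural transformations, which is exactly what the chase accomplishes.
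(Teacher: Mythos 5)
Your proposal is correct and follows the paper's own route: you take the comodule triangle $\alpha=(\widehat{-\ot T(g)}\,i_*)\cdot\alpha'$ (obtained from (\ref{E.1}) and the cited lemma) as given, express both comonad morphisms $S_{\overline{i}}$ via the formula $t_{\oF}=G\sigma\cdot\beta R$ of \ref{Cm.F}, and obtain (\ref{D.2}) by the interchange law --- which is precisely the step the paper compresses into ``it then follows''. No gap.
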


\begin{proposition}\label{Entw.} Let $\bT$ be an opmonoidal monad on a monoidal category $(\bV, \ot, \II)$
and $\textbf{C}$ a $\bV$-comonoid.
\begin{zlist}
  \item   A grouplike element $g:\II \to C$ is Galois if and only if
the grouplike element $1_\II :\II \to \II$ is Galois and
the morphism $T(g): T(\II)\to T(C)$ is an isomorphism.
 \item  If the monad $\bT$ is conservative, then any $\bV$-comonoid admitting 
   a Galois grouplike element is (isomorphic to) the trivial $\bV$-comonoid
    $\emph{\textbf{I}}$.
\end{zlist}
\end{proposition}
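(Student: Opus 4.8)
The plan is to reduce both statements to the factorisation of comonad morphisms recorded in diagram (\ref{D.2}). Recall from \ref{submonad} that the identification $\bT^g=\bT^{1_\II}$ makes the comonad $\widetilde{G_{T(\bC)}}$ generated by $i^*\dashv i_*$ independent of whether one works with $\bC$ or with the trivial comonoid $\textbf{I}$; thus (\ref{D.2}) expresses the morphism $S_{\overline{i}}$ attached to $\bC$ as the composite of the one attached to $\textbf{I}$ with $\widehat{-\ot T(g)}$, i.e. $S^{\bC}_{\overline{i}}=(\widehat{-\ot T(g)})\cdot S^{\textbf{I}}_{\overline{i}}$. Since, by the definition following \ref{G.E}, $g$ is Galois precisely when $S^{\bC}_{\overline{i}}$ is invertible and $1_\II$ is Galois precisely when $S^{\textbf{I}}_{\overline{i}}$ is invertible, part (1) becomes a ``two out of three'' statement for this composite.

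First I would record that $T(g)$ is a split monomorphism: the grouplike condition gives $\varepsilon\cdot g=1_\II$, hence $T(\varepsilon)\cdot T(g)=1_{T(\II)}$, and the same splitting transports to $-\ot T(g)$ and to its lift $\widehat{-\ot T(g)}$ to $\bV_T$-comonads. For the ``if'' direction I would combine invertibility of $S^{\textbf{I}}_{\overline{i}}$ (that is, $1_\II$ Galois) with invertibility of $\widehat{-\ot T(g)}$; the latter follows from $T(g)$ being an isomorphism together with $U_T(\widehat{-\ot T(g)})=(-\ot T(g))U_T$ and conservativity of $U_T$. Their composite $S^{\bC}_{\overline{i}}$ is then an isomorphism, so $g$ is Galois. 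For the ``only if'' direction I would run the complementary argument: from $S^{\bC}_{\overline{i}}$ invertible one reads off immediately that $\widehat{-\ot T(g)}$ is a split epimorphism, and being already a split monomorphism it is an isomorphism; then $S^{\textbf{I}}_{\overline{i}}=(\widehat{-\ot T(g)})^{-1}\cdot S^{\bC}_{\overline{i}}$ is invertible, so $1_\II$ is Galois, while conservativity of $U_T$ pushes $\widehat{-\ot T(g)}$ iso down to $-\ot T(g)$ iso and hence to $T(g)$ iso.

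For part (2) I would feed the conclusion of part (1) into the extra hypothesis that $\bT$ is conservative. A Galois grouplike element $g$ forces $T(g)$ to be an isomorphism, whence $g$ itself is an isomorphism in $\bV$ because $T$ reflects isomorphisms. Since a grouplike element is exactly a comonoid morphism $\textbf{I}\to\bC$ (as noted in \ref{submonad}), and the inverse of a comonoid morphism that is invertible in the underlying category is again a comonoid morphism, $g$ is an isomorphism of $\bV$-comonoids, so $\bC\cong\textbf{I}$.

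The only genuinely delicate point is the ``only if'' half of part (1): the factorisation by itself yields merely that $\widehat{-\ot T(g)}$ is a split epimorphism, and it is the grouplike (split mono) property of $g$ that must be invoked to upgrade this to an isomorphism. Everything else is bookkeeping with the conservativity of $U_T$ and of $T$ and with the standard fact that invertible comonoid morphisms have comonoid-morphism inverses.
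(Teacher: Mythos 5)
Your proof is correct and follows essentially the same route as the paper's, which disposes of the whole proposition in two lines by appealing to the factorisation $S_{\overline{i}}=\widehat{-\ot T(g)}\cdot S_{\overline{i}}$ recorded in Diagram (\ref{D.2}) together with the split-mono property of $T(g)$; your write-up is considerably more detailed than the published argument, and your treatment of part (2) matches the paper's. One step is misattributed, however: conservativity of $U_T$ lets you \emph{lift} isomorphisms along $U_T$, not push them down. From $\widehat{-\ot T(g)}$ invertible you get, by merely applying $U_T$, that $V\ot T(g)$ is invertible for every $\bT$-algebra $(V,h_V)$ -- but $\II$ need not underlie a $\bT$-algebra, so this does not yet give $T(g)=\II\ot T(g)$ invertible. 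The gap closes in one line: taking the free algebra $(T(\II),m_\II)$ shows $T(\II)\ot T(g)$ is an isomorphism, and since $\theta\cdot e_\II=1_\II$ exhibits $\II\ot T(g)$ as a retract of $T(\II)\ot T(g)$ in the arrow category, $T(g)$ is an isomorphism (equivalently: this makes the split monomorphism $T(g)$ also a split epimorphism). Everything else, including the ``two out of three'' reading of (\ref{D.2}) and the upgrade of $\widehat{-\ot T(g)}$ from split epi plus split mono to iso, is exactly what the paper intends.
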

\begin{proof} To say that the grouplike morphism $g:\II \to C$ (resp.
$1_\II: \II \to \II$) is Galois is to say that the comonad morphism
$S_{\overline{i}}:\widetilde{G_{T(\textbf{I})}}\to \widehat{G_{T(\textbf{I})}}$
(resp. $S_{\overline{i}}:\widetilde{G_{T(\bC)}}\to \widehat{G_{T(\textbf{C})}}$)
is an isomorphism. Now, since $T(g)$ is a split monomorphism, the result follows from the
commutativity of Diagram (\ref{D.2}). This proves (1).

Recalling that a monad is called conservative provided that its functor-part is conservative, one sees
that (2) follows from (1).
\end{proof}

Combining Theorem \ref{Gal-Entw} and Proposition \ref{Entw.} gives:

\begin{theorem}\label{Main} Let $\bT$ be an opmonoidal monad on a monoidal category $(\bV, \ot, \II)$
such that the functor $i^*:\bV_{T} \to \bV_{T^{1_\II}}$ admits a left adjoint functor 
$i_*:\bV_{T^{1_\II}} \to \bV_T$ and $\textbf{C}$ a $\bV$-comonoid. Then, for any grouplike element 
$g: \II \to C$, the following are equivalent:
\begin{blist}
  \item   $g:\II \to C$ is a Galois grouplike element and the functor
$i_*$ is comonadic;
  \item   the comparison functor 
   $\overline{i}:\bV_{T^g}\to  
 (\V_{\emph{T}})^{^{\phi_\emph{T}(\emph{\textbf{C})}}}$
is an equivalence of categories;
\item $1_\II :\II \to \II$ is a Galois grouplike element, the functor
$i_*$ is comonadic and the morphism $T(g): T(\II)\to T(C)$ is an isomorphism;
  \item   the comparison functor $\overline{i}:\bV_{T^{1_{\II}}}\to 
(\V_{\emph{T}})^{^{\phi_\emph{T}(\emph{\textbf{I})}}}$
is an equivalence of categories and the morphism $T(g): T(\II)\to T(C)$ is an isomorphism.
\end{blist}
\end{theorem}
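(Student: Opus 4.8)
The plan is to prove the equivalence of (a)--(d) by establishing the cycle (a)$\Leftrightarrow$(b) and (c)$\Leftrightarrow$(d) directly from Theorem~\ref{Gal-Entw}, and then closing the loop by showing (a)$\Leftrightarrow$(c) via Proposition~\ref{Entw.}. First I would observe that (a)$\Leftrightarrow$(b) is simply Theorem~\ref{Gal-Entw} applied to the grouplike element $g:\II\to C$ itself: that theorem asserts precisely that the comparison functor $\overline{i}:\bV_{T^g}\to(\bV_T)^{\widehat{G_{T(\bC)}}}$ is an equivalence if and only if $g$ is a Galois grouplike element and $i_*$ is comonadic. Since the excerpt identifies $(\bV_T)^{\widehat{G_{T(\bC)}}}$ with $(\V_{T})^{\phi_T(\textbf{C})}$, this is exactly the content of (b).

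Next, for (c)$\Leftrightarrow$(d), I would apply Theorem~\ref{Gal-Entw} a second time, now to the trivial grouplike element $1_\II:\II\to\II$ on the trivial comonoid $\textbf{I}$. This gives that the comparison functor $\overline{i}:\bV_{T^{1_\II}}\to(\V_{T})^{\phi_T(\textbf{I})}$ is an equivalence if and only if $1_\II$ is Galois and $i_*$ is comonadic. Conjoining the extra hypothesis that $T(g):T(\II)\to T(C)$ is an isomorphism to both sides then yields (c)$\Leftrightarrow$(d) immediately. Here I would note that the standing assumption of the theorem --- that $i^*:\bV_T\to\bV_{T^{1_\II}}$ admits the left adjoint $i_*$ --- together with the identification $\bT^{1_\II}\cong\bT^g$ established in \ref{submonad} (using that $T(g)$ is a split monomorphism, as $g$ is grouplike) ensures that the single functor $i_*$ appearing throughout is well-defined and that the two instances of Theorem~\ref{Gal-Entw} refer to the \emph{same} $i_*$.

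It then remains to link the two halves, and the cleanest route is (a)$\Leftrightarrow$(c). This is where Proposition~\ref{Entw.}(1) does the essential work: it states that $g$ is a Galois grouplike element if and only if $1_\II$ is Galois \emph{and} $T(g)$ is an isomorphism. Substituting this equivalence into condition (a) --- which reads ``$g$ is Galois and $i_*$ is comonadic'' --- transforms it verbatim into ``$1_\II$ is Galois, $T(g)$ is an isomorphism, and $i_*$ is comonadic,'' which is precisely (c). With (a)$\Leftrightarrow$(b), (c)$\Leftrightarrow$(d), and (a)$\Leftrightarrow$(c) in hand, all four statements are equivalent.

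I expect the proof to be short and essentially a matter of correct bookkeeping rather than hard analysis, since all the analytic content has been absorbed into Theorem~\ref{Gal-Entw} and Proposition~\ref{Entw.}. The main obstacle, such as it is, lies in verifying that the hypotheses align: one must confirm that the left adjoint $i_*$ assumed in the theorem statement (for $1_\II$) is genuinely the one relevant to $g$ as well, which relies on the identification $\bT^g\cong\bT^{1_\II}$ from \ref{submonad}. Once that identification is invoked, comonadicity of $i_*$ is a single condition shared across (a) and (c), and no separate adjoint-existence argument for $g$ is needed. I would therefore lead with a sentence recalling $\bT^g\cong\bT^{1_\II}$ to make the shared $i_*$ explicit, and then assemble the three equivalences in the order above.
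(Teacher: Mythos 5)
Your proposal is correct and follows exactly the paper's intended argument: the paper derives Theorem \ref{Main} by ``combining Theorem \ref{Gal-Entw} and Proposition \ref{Entw.}'', which is precisely your two applications of Theorem \ref{Gal-Entw} (to $g$ and to $1_\II$, linked by the identification $\bT^g\cong\bT^{1_\II}$ from \ref{submonad}) together with Proposition \ref{Entw.}(1) to close the loop. Your write-up merely makes explicit the bookkeeping the paper leaves implicit.
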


It is easy to see that, in the case where the monad $\textbf{T}^{1_\II}=\textbf{T}^g$ is (isomorphic to) the identity
monad, the functor $\phi_{T^{1_\II}}=\phi_{T^g}$ is (isomorphic to) the identity functor, the functor $i_*$
is (isomorphic to) the functor $\phi_T$, while the functor $\overline{i}$ is (isomorphic to) the
comparison functor $K_{g,\bC}$. Using now that the monad $\bT$ is conservative provided that
the functor $\phi_T$ is so, in the light of Proposition \ref{Entw.}, we get from Theorems \ref{th.2} and \ref{Main}:

\begin{theorem}\label{Main1} Let $\bT$ be an opmonoidal monad on a monoidal category $(\bV, \ot, \II)$
such that the monad $\textbf{T}^{1_\II}$ is (isomorphic to) the identity
monad and $\textbf{C}$ a $\bV$-comonoid. Then, for any grouplike element
$g: \II \to C$, the following are equivalent:
\begin{blist}
  \item   $g:\II \to C$ is a Galois grouplike element and the functor
$\phi_T$ is comonadic;
  \item   the comparison functor $K_{g,\bC}:\bV  \to
(\V_{\emph{T}})^{^{\phi_\emph{T}(\emph{\textbf{C})}}}$
is an equivalence of categories;
  \item   $1_\II :\II \to \II$ is a Galois grouplike element, the functor
$\phi_T$ is comonadic and the morphism $g:\II \to C$ is an isomorphism;
  \item   the comparison functor 
 $K_{g,\textbf{I}}:\bV\to  
 (\V_{\emph{T}})^{^{\phi_\emph{T}(\emph{\textbf{I})}}}$
is an equivalence of categories and the morphism $g:\II \to C$ is an isomorphism;
   \item    \begin{rlist}
                  \item $\bT$ is $\emph{G}_{\emph{T}(\emph{\textbf{I})}}$-Galois,
                  \item $\bT$ is of effective descent type,
                  \item $g$ is an isomorphism.
                \end{rlist}
\end{blist}
\end{theorem}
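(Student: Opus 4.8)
The plan is to establish Theorem~\ref{Main1} as a synthesis of the two preceding results, Theorem~\ref{th.2} and Theorem~\ref{Main}, specialised to the degenerate situation where $\bT^{1_\II}$ is (isomorphic to) the identity monad. First I would record the structural simplifications that this hypothesis forces, exactly as indicated in the paragraph preceding the statement: when $\bT^{1_\II}=\bT^g$ is the identity monad, the free functor $\phi_{T^g}$ is (isomorphic to) the identity functor on $\bV$, so the adjoint $i_*:\bV_{T^g}\to\bV_T$ reduces to $\phi_T$, and the comparison functor $\overline{i}:\bV_{T^g}\to(\bV_T)^{\phi_T(\bC)}$ reduces to $K_{g,\bC}$. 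In particular the left adjoint $i_*$ to $i^*$ automatically exists, so the standing hypothesis of Theorem~\ref{Main} is satisfied for free, and the comonadicity of $i_*$ becomes comonadicity of $\phi_T$, i.e.\ the statement that $\bT$ is of effective descent type.

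With these identifications in hand, the equivalences fall out by translation. The equivalence of (a) and (b) is precisely the equivalence of (a) and (b) in Theorem~\ref{Main} after substituting $i_*=\phi_T$ and $\overline{i}=K_{g,\bC}$. Likewise (c) and (d) here are the translations of (c) and (d) in Theorem~\ref{Main}, where I would invoke Proposition~\ref{Entw.}(1) to replace the condition that ``$1_\II$ is Galois and $T(g)$ is an isomorphism'' by the cleaner reformulation, and use that $g$ is a split monomorphism (being grouplike) together with conservativity of $T$ to pass between ``$T(g)$ is an isomorphism'' and ``$g$ is an isomorphism.'' Here the key observation is that, once $\phi_T$ is comonadic, the functor $T$ is conservative (by \cite[Proposition 3.11]{Me}, as used already in the proof of Theorem~\ref{th.2}), so a split mono $g$ with $T(g)$ invertible forces $g$ itself to be invertible.

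Finally I would connect the chain to condition (e), the purely Galois-theoretic criterion. Here the route is through Theorem~\ref{th.2}: in the present degenerate setting the comparison functor $K_{g,\bC}$ into $(\bV_T)^{\phi_T(\bC)}$ is an equivalence exactly under the hypotheses of Theorem~\ref{th.2}(a), and that theorem already records the equivalence with the triple condition ``$\bT$ is $G_{T(\mathbf{I})}$-Galois, $\bT$ is of effective descent type, and $g$ is an isomorphism,'' which is (e). Thus (b)$\Leftrightarrow$(e) is a direct citation of Theorem~\ref{th.2}, and (d)$\Leftrightarrow$(e) follows by combining Theorem~\ref{th.1} (which handles the $K_{1_\II,\mathbf{I}}$ equivalence) with the isomorphism condition on $g$.

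The main obstacle I anticipate is not any single hard estimate but rather the bookkeeping needed to justify each ``is (isomorphic to)'' identification rigorously enough that the citations of Theorems~\ref{th.2} and~\ref{Main} genuinely apply. In particular one must check that under $\bT^{1_\II}\cong\mathrm{Id}$ the comonad $\widetilde{G_{T(\bC)}}$ generated by $i^*\dashv i_*$ really matches the comonad associated to $\phi_T$, so that the notion of Galois grouplike element in Definition following~\ref{G.E} survives the reduction intact; this is where Diagram~(\ref{D.2}) and the split-mono property of $-\ot T(g)$ must be invoked carefully. Once those identifications are pinned down, the proof is a short assembly, and I would write it as: ``The stated simplifications follow by direct inspection; granting them, (a)$\Leftrightarrow$(b) and (c)$\Leftrightarrow$(d) are instances of Theorem~\ref{Main}, while (b)$\Leftrightarrow$(e) is Theorem~\ref{th.2} and the remaining implications follow from Proposition~\ref{Entw.} together with conservativity of $T$.''
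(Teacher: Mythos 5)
Your proposal is correct and follows essentially the same route as the paper: the paper's argument is precisely the paragraph preceding the statement, which records that under $\bT^{1_\II}\cong \mathrm{Id}$ one has $i_*\cong\phi_T$ and $\overline{i}\cong K_{g,\bC}$, and then derives the theorem from Theorems \ref{th.2} and \ref{Main} together with Proposition \ref{Entw.} and the fact that comonadicity of $\phi_T$ makes $T$ conservative. Your write-up merely makes the same bookkeeping explicit.
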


\begin{thm}\label{Aug}{\bf Definition.} \em
We say that an opmonoidal monad $\bT$ on a monoidal category $\bV$ is \emph{augmented} if it is equipped 
with a monad morphism $\sigma: T \to 1_{\bV}.$ In this case $\sigma$ is said to be an \emph{augmentation}.
\end{thm}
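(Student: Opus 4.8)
The final item is a \emph{definition} rather than a proposition, so strictly speaking there is nothing to prove: it merely names a structure, declaring an opmonoidal monad $\bT=(T,m,e)$ on $\bV$ to be \emph{augmented} when it is equipped with a monad morphism $\sigma: T \to 1_{\bV}$, and calling such a $\sigma$ an \emph{augmentation}. Accordingly, the plan is not to deduce a logical consequence but to verify that the definition is well-posed and to unpack what the named data actually consist of.

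The single point deserving comment is that the phrase ``monad morphism $\sigma: T \to 1_{\bV}$'' is meaningful. First I would observe that the identity endofunctor $1_{\bV}$ carries the \emph{identity monad} structure $\mathbf{1}=(1_{\bV}, 1, 1)$, with trivial unit and multiplication. A morphism of monads $\sigma: \bT \to \mathbf{1}$ is then, by definition, a natural transformation $\sigma: T \to 1_{\bV}$ compatible with units and multiplications; writing out the two coherence squares against $\mathbf{1}$, they reduce to $\sigma \cdot e = 1$ and $\sigma \cdot m = \sigma \cdot \sigma T$. Thus an augmentation is precisely a natural transformation $\sigma: T \to 1_{\bV}$ subject to these two equalities, and the terminology is unambiguous.

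For later use I would record the conceptual reading of these conditions: an augmentation is exactly a splitting of the unit $e$ in the category of monads on $\bV$, the equation $\sigma \cdot e = 1$ exhibiting $\sigma$ as a retraction of $e$ and the equation $\sigma \cdot m = \sigma \cdot \sigma T$ making the retraction multiplicative. No mathematical content is asserted beyond this naming convention; any substantive property of augmented opmonoidal monads — for instance that $\sigma$ furnishes a grouplike element or interacts with the entwining $\lambda^{\bC}$ and the Galois machinery of the preceding sections — would belong to a subsequent lemma, not to the definition itself. The expected ``main obstacle'' is therefore illusory: the only thing to get right is the bookkeeping of the monad-morphism axioms, and there is no genuine difficulty to overcome.
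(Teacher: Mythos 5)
You are right: the paper offers no proof here because \ref{Aug} is a definition, and your reading of ``monad morphism $\sigma: T \to 1_{\bV}$'' (a natural transformation with $\sigma \cdot e = 1$ and $\sigma \cdot m = \sigma \cdot \sigma T$, i.e.\ a retraction of $e$ in the category of monads) is exactly the sense in which the paper later uses it, e.g.\ $\sigma\cdot e=1$ in the proof of Theorem \ref{Main2} and the observation in Lemma \ref{aug} that $(V,\sigma_V)\in \bV_T$. Nothing to add.
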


\begin{lemma}\label{aug} Suppose that $\bT$ is an augmented right-Hopf opmonoidal monad on a monoidal category $\bV$ with
an augmentation $\sigma: T \to 1_{\bV}.$ Then, for any $V \in \bV$, the composite
$$\overline{\sigma}_V: T(V) \xrightarrow{\chi_{V, \II}}T(V) \ot T(\II) \xrightarrow{\sigma_V \ot T(\II)} V \ot T(\II)$$
is an isomorphism.
\end{lemma}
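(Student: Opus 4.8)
The plan is to exhibit $\overline{\sigma}_V$ as an instance of the morphism $\mathbb{H}^r_{V,\II}$ from \ref{op-mon}, taken with a $\bT$-module structure on $V$ supplied by the augmentation, and then to read off its invertibility from the right-Hopf hypothesis via the criterion recorded there.

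First I would observe that the augmentation turns every object into a $\bT$-module. To say that $\sigma : T \to 1_{\bV}$ is a morphism of monads into the identity monad is exactly to assert the two equalities $\sigma_V \cdot e_V = 1_V$ and $\sigma_V \cdot m_V = \sigma_V \cdot T(\sigma_V)$, and these are precisely the axioms of \ref{M.F} making $(V, \sigma_V)$ an object of $\bV_T$; thus $\sigma_V : T(V) \to V$ is a genuine $\bT$-module structure, natural in $V$. Next, since all our monoidal categories are strict we have $V \ot \II = V$, hence $T(V \ot \II) = T(V)$, and under this identification the composite
$$T(V) = T(V \ot \II) \xrightarrow{\chi_{V,\II}} T(V) \ot T(\II) \xrightarrow{\sigma_V \ot T(\II)} V \ot T(\II)$$
defining $\mathbb{H}^r_{V,\II}$ for the module $(V, \sigma_V)$ is literally the composite $\overline{\sigma}_V$ of the statement.

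It then remains to invoke right-Hopfness. Since $\bT$ is right Hopf, every fusion operator $H^r_{U,W}$ is invertible, so in particular $H^r_{-,\II}$ is an isomorphism; equivalently $\bT$ is right pre-Hopf. By the part of \cite{BLV} recorded at the end of \ref{op-mon}, a monad is right pre-Hopf if and only if $\mathbb{H}^r_{V,\II}$ is an isomorphism for every $(V, h_V) \in \bV_T$. Applying this to the module $(V, \sigma_V)$ produced in the first step gives that $\overline{\sigma}_V = \mathbb{H}^r_{V,\II}$ is an isomorphism, as claimed.

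I expect the only point requiring any care to be the first step, where one must match the monad-morphism identities for $\sigma$ against the $\bT$-module axioms of \ref{M.F}; this is a direct translation rather than a computation. The subsequent identification of $\overline{\sigma}_V$ with $\mathbb{H}^r_{V,\II}$ is purely formal once strictness (in force since \ref{op-mon}) is used, and the final step is a citation. I note in passing that full right-Hopfness is more than is needed here: right pre-Hopfness already suffices for the conclusion.
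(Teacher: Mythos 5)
Your proof is correct and follows exactly the paper's argument: the paper's entire proof is the remark that $(V,\sigma_V)$ is an object of $\bV_T$ because $\sigma$ is a monad morphism, leaving implicit the identification of $\overline{\sigma}_V$ with $\mathbb{H}^r_{V,\II}$ and the appeal to the criterion at the end of \ref{op-mon}, both of which you spell out. Your closing observation that right pre-Hopfness already suffices is also accurate.
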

\begin{proof}Just note that, since $\sigma: T \to 1_{\bV}$ is a morphism of monads, for any $V \in \bV$, $(V,\sigma_V)$ is an object
of $\bV_T$.
\end{proof}

\begin{theorem}\label{Main2} Let $\bT=(T, m, e)$ be an augmented right-Hopf opmonoidal monad on a 
Cauchy complete monoidal category $(\bV, \ot, \II)$
with an augmentation $\sigma: T \to 1_{\bV}.$ Then, for any grouplike element
$g: \II \to C$, the following are equivalent:
\begin{blist}
 \item   $g:\II \to C$ is a Galois grouplike element;
  \item   the comparison functor 
   $K_{g,\bC}:\bV \to 
    (\V_{\emph{T}})^{^{\phi_\emph{T}(\emph{\textbf{C})}}}$
is an equivalence of categories;
  \item   the comparison functor 
   $K_{g,\textbf{I}}:\bV\to   
   (\V_{\emph{T}})^{^{\phi_\emph{T}(\emph{\textbf{I})}}}$
  is an equivalence of categories and the morphism $g:\II \to C$ is an isomorphism;
\item $1_\II :\II \to \II$ is a Galois grouplike element and the morphism 
      $g:\II \to C$ is an isomorphism;
\item   \begin{rlist}
         \item $\bT$ is $\emph{G}_{\emph{T}(\emph{\textbf{I})}}$-Galois,
         \item $g$ is an isomorphism.
        \end{rlist}
\end{blist}
\end{theorem}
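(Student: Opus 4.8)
The plan is to deduce this from Theorem \ref{Main1}: I would check that its standing hypothesis holds in the present situation, and that the side conditions occurring in its list of equivalences (comonadicity of $\phi_T$, equivalently that $\bT$ is of effective descent type) are here automatically satisfied, hence may be deleted.

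First I would show that $\bT^{1_\II}$ is isomorphic to the identity monad. As recalled in \ref{submonad}, $T^{1_\II}(V)$ is the equaliser of $T(V)\ot e_\II$ and $\chi_{V,\II}$, with inclusion $\iota_V\colon T^{1_\II}(V)\to T(V)$. The opmonoidal-monad identity $\chi_{V,\II}\cdot e_V=e_V\ot e_\II$ shows that $e_V\colon V\to T(V)$ equalises these two morphisms, so $e_V$ factors as $e_V=\iota_V\cdot j_V$; set $p_V:=\sigma_V\cdot\iota_V$. Since $\sigma$ is an augmentation, $p_V\cdot j_V=\sigma_V\cdot e_V=1_V$. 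For the reverse composite it suffices, $\iota_V$ being monic, to check $e_V\cdot\sigma_V\cdot\iota_V=\iota_V$; and because $\overline{\sigma}_V=(\sigma_V\ot T(\II))\cdot\chi_{V,\II}$ is invertible by Lemma \ref{aug}, which is the sole place where right-Hopfness enters, it is enough to verify the equality after composing with $\overline{\sigma}_V$. Using $\overline{\sigma}_V\cdot e_V=V\ot e_\II$ and the equaliser relation $\chi_{V,\II}\cdot\iota_V=(T(V)\ot e_\II)\cdot\iota_V$, both sides collapse to $(V\ot e_\II)\cdot\sigma_V\cdot\iota_V$. Thus $j_V$ is an isomorphism, natural in $V$; since it is precisely the unit $1\to T^{1_\II}$, a unit that is invertible forces $\bT^{1_\II}\cong 1_\bV$ as monads, which I would record as a routine verification.

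Next I would show that $\bT$ is of effective descent type, i.e. that $\phi_T\colon\bV\to\bV_T$ is comonadic. Here the augmentation $\sigma$ exhibits the unit $e\colon 1\to T$ as a split monomorphism, and $\bV$ is assumed Cauchy complete; by the effective-descent criterion of \cite{Me} these two facts already force $\phi_T$ to be comonadic. This step uses only that $\bT$ is augmented and $\bV$ is Cauchy complete, not right-Hopfness.

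Finally, by the first step the hypothesis of Theorem \ref{Main1} is satisfied, so conditions (a)--(e) there are equivalent; by the second step the clauses ``$\phi_T$ is comonadic'' in (a), (c) and ``$\bT$ is of effective descent type'' in (e) hold unconditionally and drop out, leaving precisely conditions (a), (d), (c), (e) of the present theorem together with (b). I expect the second step to be the main obstacle: the substance lies in matching the split-unit data provided by the augmentation to the exact hypotheses of the comonadicity result of \cite{Me}, with Cauchy completeness being what ensures the relevant idempotent splits.
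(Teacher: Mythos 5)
Your proposal is correct and follows essentially the same route as the paper: both reduce to Theorem \ref{Main1} by establishing (i) that $\bT^{1_\II}$ is the identity monad, using the invertibility of $\overline{\sigma}_V$ from Lemma \ref{aug}, and (ii) that $\phi_T$ is comonadic, via the split unit $\sigma\cdot e=1$ together with Cauchy completeness and the criterion of \cite{Me}. The only (minor) divergence is in step (i), where the paper transports the defining equaliser along $\overline{\sigma}_V$ and exhibits a split equaliser, whereas you directly invert the unit $j_V$ of $\bT^{1_\II}$; both verifications are sound.
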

\begin{proof}Using naturality of $\chi$, it is not hard to check that the diagram
$$\xymatrix{T(V) \ar[d]_{\overline{\sigma}_V}
\ar@{->}@<0.5ex>[rr]^-{T(V) \ot e_{\II}} \ar@{->}@<-0.5ex>
[rr]_-{\chi_{V, \II}}&& T(V)\otimes T(\II) \ar[d]^{\overline{\sigma}_V \ot T(\II)}\\
V \ot T(\II)
\ar@{->}@<0.5ex>[rr]^-{V \ot T(\II) \ot e_{\II}} \ar@{->}@<-0.5ex>
[rr]_-{V \ot \chi_{\II, \II}}&& V \ot T(\II)\otimes T(\II)}$$
commutes. Since $\overline{\sigma}_V$ is an isomorphism by Lemma \ref{aug}, it follows that
$T^{1_\II}(V)$ is (isomorphic to) the equaliser  of the pair
$$\xymatrix{V \ot T(\II)
\ar@{->}@<0.5ex>[rr]^-{V \ot T(\II) \ot e_{\II}} \ar@{->}@<-0.5ex>
[rr]_-{V \ot \chi_{\II, \II}}&& V \ot T(\II)\otimes T(\II)}.$$

Using now that
\begin{itemize}
  \item $\theta \cdot e_\II=1_\II$ and $(\theta \ot T(\II))\cdot \chi_{\II,\II}=1_{T(\II)}$, since 
  the monad $\bT$ is opmonoidal, and
  \item $(\theta \ot T(\II))\cdot (T(\II) \ot e_{\II})=e_\II \cdot \theta$ by naturality of composition,
\end{itemize} one sees that the diagram
$$\xymatrix{\II \ar[r]^-{e_\II}&T(\II) \ar@/^1pc/[l]^\theta
\ar@{->}@<0.5ex>[rr]^-{T(\II) \ot e_{\II}} \ar@{->}@<-0.5ex>
[rr]_-{\chi_{\II, \II}}&& T(\II)\otimes T(\II)\ar@/^1.5pc/[ll]^{\theta \ot T(\II)}}.$$
is a split equaliser. It follows --since split equalisers are preserved by any functor-- that
the diagram 
$$\xymatrix{V \ar[r]^-{V \ot e_{\II}}& V \ot T(\II)
\ar@{->}@<0.5ex>[rr]^-{V \ot T(\II) \ot e_{\II}} \ar@{->}@<-0.5ex>
[rr]_-{V \ot \chi_{\II, \II}}&& V \ot T(\II)\otimes T(\II)}$$ is a (split) equaliser. Thus the
monad $\textbf{T}^{1_\II}$ is (isomorphic to) the identity monad.

Next, as $\sigma:T \to 1_\bV$ is a morphism of monads, one has in particular
that $\sigma \cdot e=1$. Thus the unit of the monad $\textbf{T}$ is a split monomorphism, and since 
the category $\bV$ is Cauchy complete by hypothesis, it follows from \cite[Corollary 3.17]{Me}
that $\textbf{T}$ is of effective descent type, i.e. the functor $\phi_T$ is comonadic. 
Putting now this information together, the assertions follow by
Theorem \ref{Main1}.
\end{proof}

\section{Applications}

In this final section we outline some applications of the notions developed. 

\begin{thm}\label{cart.}{\bf Monads on cartesian monoidal categories.} 
\em Let $\A$ be a category with finite products. Then
$\A$ is equipped with the (symmetric) monoidal structure $(\A,\times, \mathtt{1})$
(known as the \emph{cartesian monoidal structure}), where $a
\times b$ is some chosen product of $a$ and $b$, and $\mathtt{1}$ is
a chosen terminal object in $\A$. For any object $a \in \A$, we write $!_a$ for the
unique morphism $a \to \mathtt{1}$. Given morphisms $f: a \to x$ and $g: a \to y$ in $\A$, we write
$<f,g>:a \to x \times y$ for the unique morphism inducing commutativity of 
the diagram
$$\xymatrix{&& a \ar[lld]_{f} \ar[rrd]^{g} \ar[d]|{<f,g>}&&\\
x && x \times y \ar[ll]^{p_1} \ar[rr]_{p_2}&& y.}$$  

Any monad $\textbf{T}$ on $\A$ has a canonical structure of an opmonoidal monad  given by
$$\chi_{a,b}=<T(p_1),T(p_2)>:T(a \times b) \to T(a) \times T(b),$$
$$\theta=!_{T(\mathtt{1})}:T(\mathtt{1}) \to \mathtt{1}.$$ Thus, for any monad $\textbf{T}$ on $\A$,
the category $\A_T$ is also cartesian.

Since, for any $a \in \A$, the projection
$p_1:a\simeq a \times \mathtt{1} \to a$ is (isomorphic to) the
identity morphism $1_a: a \to a$, while the projection $p_2:a\simeq a \times \mathtt{1}\to \mathtt{1}$ is
(isomorphic to) the morphism $!_a : a \to \mathtt{1}$, $\chi_{a,\mathtt{1}}:T(a) \to T(a) \times T(\mathtt{1})$
is just the morphism $<1_{T(a)}, T(!_a)>$.

An arbitrary object $a\in \A$ has
a canonical $\A$-comonoid structure given by the diagonal
morphism $\Delta_a=<\!1_a, 1_a \!> : a \to a \times a$. Writing $\textbf{a}$ for the corresponding
$\A$-comonoid, one has that $\A^{\textbf{a}}$ is
(isomorphic to) the comma category  $\A\!\downarrow \!a$ (see, for example, \cite{MW}). Modulo this isomorphism,
the forgetful functor $U^{\textbf{a}}:\A^{\textbf{a}} \to \A$ corresponds to the functor
$$\Sigma_a:\A\!\downarrow \!a \to \A,\,\,\, (x \to a) \longrightarrow x,$$ 
while its right adjoint $\phi_{\textbf{a}}:\A \to \A^{\textbf{a}}$
corresponds to the functor
$$a^*:\A\to \A\!\downarrow \!a, \,\,\, x\longrightarrow (p_1: a \times x \to a).$$
\end{thm}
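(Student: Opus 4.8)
The plan is to derive every assertion in \ref{cart.} from the universal properties of the finite products and of the terminal object $\mathtt{1}$, together with the functoriality of $T$; no ingredient beyond these is needed. For the opmonoidality of $(T,\chi,\theta)$ I would invoke (or reprove in two lines) the general fact that any functor between categories with finite products is canonically opmonoidal for the cartesian structures, the comparison morphism being $\chi_{a,b}=\langle T(p_1),T(p_2)\rangle$ and the counit being the unique map $\theta=\,!_{T(\mathtt{1})}$ to the terminal object. The opmonoidal coherence conditions are equalities of morphisms whose codomain is either a (binary or ternary) product or $\mathtt{1}$; in the first case it suffices to compare composites with the projections, and in the second case the equality is automatic since maps into $\mathtt{1}$ are unique. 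Each such comparison collapses, after applying functoriality of $T$, to an identity among the structural morphisms $p_i$ and $!_a$ of $\A$. The monad compatibility --- that $m$ and $e$ are opmonoidal natural transformations --- is verified in exactly the same manner.

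For the remaining items the arguments are short. The category $\A_T$ has finite products because $U_T$ creates limits, so $\A_T$ is cartesian; one then checks that the monoidal structure transported along the opmonoidal monad agrees with this cartesian one, the tensor of two $\bT$-algebras having as underlying object the product of the underlyings. The formula $\chi_{a,\mathtt{1}}=\langle 1_{T(a)},T(!_a)\rangle$ is immediate once one uses the canonical isomorphism $a\times\mathtt{1}\cong a$, under which $p_1$ becomes $1_a$ and $p_2$ becomes $!_a$, and then applies $T$. That $(\Delta_a,\,!_a)$ endows $a$ with a comonoid structure is again a postcomposition-with-projections check: counitality reduces to $p_1\Delta_a=1_a=p_2\Delta_a$ and coassociativity to the equality of the two evident maps $a\to a\times a\times a$, both holding by the universal property.

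For the comma-category description I would analyse the comonad $\bG_{\textbf{a}}$ with functor part $-\times a$, whose counit is the first projection. A comodule structure $\rho\colon x\to x\times a$ therefore satisfies $p_1\rho=1_x$, so $\rho=\langle 1_x,f\rangle$ for a unique $f\colon x\to a$; the coassociativity axiom is then automatically satisfied, since both sides, having a product as codomain, are determined by their projections, which all reduce to $f$. The assignment $(x,\rho)\mapsto f$ and its evident inverse give the isomorphism $\A^{\textbf{a}}\cong\A\!\downarrow\!a$ of categories, and tracing the forgetful and cofree functors through it shows that $U^{\textbf{a}}$ corresponds to $\Sigma_a$ (return the source $x$) and that its right adjoint $\phi_{\textbf{a}}$, sending $x$ to the cofree comodule $x\times a$ with comultiplication as structure map, corresponds to $a^*$ (product with $a$, the structure map being a projection onto $a$).

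The point to emphasise is that there is no genuine obstacle here: the whole statement is formal and is governed entirely by the two universal properties. The only thing demanding care is the bookkeeping of the canonical isomorphisms $a\times\mathtt{1}\cong a\cong\mathtt{1}\times a$ and of the order of factors in $x\times a$ versus $a\times x$; working with strictly chosen products, as the coherence theorem of \cite{Mc} permits, removes even this friction, so that each verification becomes a one-line comparison of projections.
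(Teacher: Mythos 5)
Your proposal is correct, and it fills in exactly the routine universal-property verifications that the paper itself omits: the statement \ref{cart.} is presented there without proof, as a recollection of standard facts (with a pointer to \cite{MW} for the identification $\A^{\textbf{a}}\simeq\A\!\downarrow\!a$), and every check you describe --- comparing composites with projections, uniqueness of maps into $\mathtt{1}$, creation of limits by $U_T$, and the collapse of coassociativity for $-\times a$-comodules --- is the intended argument. The only point worth flagging, which you already do, is the factor-swap $x\times a\cong a\times x$ needed to match the cofree comodule with the paper's convention $a^*(x)=(p_1\colon a\times x\to a)$.
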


Suppose now  $\textbf{T}$ to be a monad on a cartesian category $\A$ such that the category $\A_T$ admits equalisers. 
Then one can form the monad $T^{1_\mathtt{1}}$. Moreover, modulo the isomorphism 
of categories 
$(\A_T)^{\phi_T(\mathtt{1})} \simeq(\A_T\downarrow \phi_T(\mathtt{1}))$, 
one rewrites Diagram \ref{D.1} from \ref{G.E} as 

\begin{equation}\label{D.3}
\xymatrix{\A \ar@/^2pc/@{->}[rrrr]^{K_{1_\mathtt{1},\mathtt{1}}}\ar[rrd]_{\phi_T} \ar[rr]^{\phi_{T^{1_\mathtt{1}}}}& &
\A_{T^{1_\mathtt{1}}}\ar[d]^{i_*} \ar[rr]^{\overline{i}} & &
(\A_T)\downarrow{{\phi_T(\mathtt{1})}}  \ar[lld]^{\Sigma_{{\phi_T(\mathtt{1})} }}\\
 & &  \A_T & . &}
 \end{equation} Note that, for any $a \in \A$, $K_{1_\mathtt{1},\mathtt{1}}(a)=((T(a),m_a),T(!_a))$.

\begin{thm}\label{R1}{\bf Remark.} \em Obviously, for any  $(a, h_a)
\in \A_T$, the $(a, h_a)$-component of the natural transformation
$\mathbb{H}^r_{-,\mathtt{1}}:T \to - \times T(\mathtt{1})$ is the
composite
$$\xymatrix{T(a) \ar[rr]^-{<1_{T(a)}, T(!_a)>} && T(a) \times T(\mathtt{1})
\ar[rr]^-{h_a \times T(\mathtt{1})}&&a \times T(\mathtt{1}),}$$ which is the same as the
morphism $$T(a) \xrightarrow{<h_a, T(!_a)>}a \times T(\mathtt{1}).$$ If $T(\mathtt{1}) \simeq \mathtt{1},$
then $T(!_a) \simeq !_{T(a)}$ and thus $<\!\!h_a, T(!_a)\!>$ can be identified with the morphism $h_a : T(a) \to a$.
\end{thm}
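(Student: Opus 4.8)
The plan is to unwind the definition of $\mathbb{H}^r_{-,\mathtt{1}}$ in the cartesian setting and then simplify it using the explicit form of $\chi_{a,\mathtt{1}}$ recorded in \ref{cart.}. First I would specialise the general formula from \ref{op-mon},
$$\mathbb{H}^r_{V,W}: T(V \times W) \xrightarrow{\chi_{V,W}} T(V)\times T(W) \xrightarrow{h_V \times T(W)} V \times T(W),$$
to $V=a$ and $W=\mathtt{1}$, using the canonical isomorphism $a \times \mathtt{1} \simeq a$; the $(a,h_a)$-component then becomes the two-step composite displayed in the statement, once I substitute the identity $\chi_{a,\mathtt{1}}=<1_{T(a)}, T(!_a)>$ already established in \ref{cart.}.

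Next I would collapse this composite into a single pairing. The key is the elementary product identity $(f \times g)\cdot <p,q>=<f\cdot p,\, g\cdot q>$, valid for any two morphisms $p,q$ out of a common source. Applying it with $f=h_a$, $g=1_{T(\mathtt{1})}$ (the identity abbreviated by $T(\mathtt{1})$ in the arrow), $p=1_{T(a)}$ and $q=T(!_a)$ yields
$$(h_a \times T(\mathtt{1}))\cdot <1_{T(a)}, T(!_a)> = <h_a, T(!_a)>,$$
which is exactly the asserted reformulation $T(a)\xrightarrow{<h_a,T(!_a)>} a\times T(\mathtt{1})$.

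Finally, for the case $T(\mathtt{1})\simeq \mathtt{1}$, I would observe that any object isomorphic to the terminal object $\mathtt{1}$ is again terminal, so $T(\mathtt{1})$ is terminal. Consequently $T(!_a): T(a) \to T(\mathtt{1})$, being a morphism into a terminal object, must agree --modulo this isomorphism-- with the unique such map $!_{T(a)}$; this is the meaning of $T(!_a)\simeq !_{T(a)}$. Under the induced identification $a\times T(\mathtt{1})\simeq a\times \mathtt{1}\simeq a$, whose underlying isomorphism is the first projection $p_1$, one computes $p_1 \cdot <h_a, T(!_a)> = h_a$, so the pairing $<h_a,T(!_a)>$ is identified with $h_a$, as claimed.

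The entire argument is a bookkeeping exercise with finite products, so I do not expect a genuine obstacle; the only point requiring care is keeping the canonical isomorphisms $a\times \mathtt{1}\simeq a$ and $T(\mathtt{1})\simeq \mathtt{1}$ compatible with the projection--pairing identities, which is precisely why the statement can be flagged as \emph{obvious}.
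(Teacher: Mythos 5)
Your proposal is correct and follows exactly the computation the paper leaves implicit in calling the remark ``obvious'': specialising $\mathbb{H}^r_{V,W}$ to $W=\mathtt{1}$, substituting $\chi_{a,\mathtt{1}}=<\!1_{T(a)},T(!_a)\!>$ from \ref{cart.}, collapsing via $(f\times g)\cdot<\!p,q\!>=<\!f\cdot p,\,g\cdot q\!>$, and using terminality of $T(\mathtt{1})$ for the last identification. Nothing is missing.
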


 Now fix a monad $\textbf{T}=(T,m,e)$ on a cartesian monoidal category $\A$ with equalisers.
Then, for any $a \in \A$, $T^{1_\mathtt{1}}(a)$ can be calculated as the equaliser of the diagram
$$\xymatrix{ T(a)
\ar@{->}@<0.5ex>[rr]^-{<1_{T(a)}, T(!_a)>} \ar@{->}@<-0.5ex>
[rr]_-{1_{T(a)} \times e_\mathtt{1}}&& T(a) \times T(\mathtt{1}).}$$ But since $1_{T(a)} \times e_\mathtt{1}$
can be identified with the morphism $<1_{T(a)},e_{\mathtt{1}}\cdot !_{T(a)}>$, the diagram
$$\xymatrix{T^{1_\mathtt{1}}(a)\ar[r]^{i_a}&T(a)
\ar@{->}@<0.5ex>[rr]^-{<1_{T(a)}, T(!_a)>} \ar@{->}@<-0.5ex>
[rr]_-{<1_{T(a)},e_{\mathtt{1}}\cdot !_{T(a)}>}&& T(a) \times T(\mathtt{1})}$$ is an equaliser if and only if so is
$$\xymatrix{T^{1_\mathtt{1}}(a)\ar[r]^{i_a}&T(a)
\ar@{->}@<0.5ex>[rrr]^-{p_2 \cdot <1_{T(a)}, T(!_a)>} \ar@{->}@<-0.5ex>
[rrr]_-{p_2 \cdot <1_{T(a)},e_{\mathtt{1}}\cdot !_{T(a)}>}&&& T(\mathtt{1})}.$$
 As $p_2 \cdot <1_{T(a)}, T(!_a)>=T(!_a)$
and $p_2 \cdot<1_{T(a)},e_{\mathtt{1}}\cdot !_{T(a)}>=e_{\mathtt{1}}\cdot !_{T(a)}$, the diagram
$$
\xymatrix{T^{1_\mathtt{1}}(a)\ar[r]^{i_a}&T(a)
\ar@{->}@<0.5ex>[rr]^-{ T(!_a)} \ar@{->}@<-0.5ex>
[rr]_-{e_{\mathtt{1}}\cdot !_{T(a)}}& & T(\mathtt{1})}$$ 
is an equaliser. It follows that if $T(\mathtt{1})\simeq \mathtt{1},$
then $i_a$ is an isomorphism. Conversely, if $i_a$ is an isomorphism, then $T(!_a)=e_{\mathtt{1}}\cdot !_{T(a)}$. In particular,
$T(!_\mathtt{1})=e_{\mathtt{1}}\cdot !_{T(\mathtt{1})}$. But $T(!_\mathtt{1})=1_\mathtt{1}$, implying that both $e_{\mathtt{1}}$ and $!_{T(\mathtt{1})}$
are isomorphisms. Thus:

\begin{lemma}\label{term.} Let $\textbf{T}$ be a monad on a cartesian monoidal category $(\A, \times, {\mathtt{1}})$. Then 
the canonical inclusion $i:T^{1_\mathtt{1}} \to T$  is an isomorphism if and only if the functor part $T$ preserves the terminal object.
\end{lemma}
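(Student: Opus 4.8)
The plan is to read the conclusion directly off the equaliser presentation established just above the statement: for each object $a$ the component $i_a : T^{1_\mathtt{1}}(a) \to T(a)$ is the equaliser of the parallel pair $T(!_a),\, e_{\mathtt{1}} \cdot !_{T(a)} : T(a) \rightrightarrows T(\mathtt{1})$. Everything then reduces to two elementary properties of the terminal object $\mathtt{1}$, namely that every object admits a unique morphism into it and that its only endomorphism is the identity.

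For the implication ``$T$ preserves $\mathtt{1}$ $\Rightarrow$ $i$ is an isomorphism'', I would argue as follows. If $T(\mathtt{1})$ is terminal, then there is a unique morphism $T(a) \to T(\mathtt{1})$, so the two legs of the parallel pair necessarily coincide, $T(!_a) = e_{\mathtt{1}} \cdot !_{T(a)}$, for every $a$. The equaliser of a pair of equal morphisms is an isomorphism, hence each component $i_a$ is invertible and $i$ is a natural isomorphism.

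For the converse, suppose $i$ is an isomorphism, so that each $i_a$ is in particular an epimorphism. Cancelling $i_a$ in the defining identity $T(!_a) \cdot i_a = e_{\mathtt{1}} \cdot !_{T(a)} \cdot i_a$ gives $T(!_a) = e_{\mathtt{1}} \cdot !_{T(a)}$. It then suffices to evaluate at $a = \mathtt{1}$: since $!_\mathtt{1} = 1_\mathtt{1}$ we have $T(!_\mathtt{1}) = 1_{T(\mathtt{1})}$, whence $e_{\mathtt{1}} \cdot !_{T(\mathtt{1})} = 1_{T(\mathtt{1})}$. On the other hand $!_{T(\mathtt{1})} \cdot e_{\mathtt{1}}$ is an endomorphism of $\mathtt{1}$, hence equal to $1_\mathtt{1}$. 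Thus $!_{T(\mathtt{1})}$ is a two-sided inverse of the unit component $e_{\mathtt{1}} : \mathtt{1} \to T(\mathtt{1})$, so $T(\mathtt{1}) \simeq \mathtt{1}$ and $T$ preserves the terminal object.

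I do not expect any serious obstacle here: once the equaliser description of $T^{1_\mathtt{1}}$ is in hand, the argument is formal. The only points requiring a little care are that ``$i$ is an isomorphism'' must be read componentwise (so that each $i_a$ may be treated as an epimorphism), and that it is enough to specialise the resulting equality of arrows to $a = \mathtt{1}$ in order to produce the explicit inverse of $e_{\mathtt{1}}$.
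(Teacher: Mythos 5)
Your argument is correct and coincides with the paper's own proof: the paper likewise reduces the statement to the equaliser presentation $T^{1_\mathtt{1}}(a)\xrightarrow{i_a}T(a)\rightrightarrows T(\mathtt{1})$ with legs $T(!_a)$ and $e_{\mathtt{1}}\cdot !_{T(a)}$, observes that these legs coincide when $T(\mathtt{1})$ is terminal, and conversely specialises the resulting equality at $a=\mathtt{1}$ to exhibit $!_{T(\mathtt{1})}$ as a two-sided inverse of $e_{\mathtt{1}}$. No gaps; the approach is essentially identical.
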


\begin{proposition}\label{cart.mon.} Let $(\A, \times, {\mathtt{1}})$ be a cartesian monoidal category.
For any monad $\bT$ on  $\A$, whose functor part
preserves the terminal object ${\mathtt{1}}$, the comparison functor $$\overline{i}:\A_{T^{1_\mathtt{1}}}\to (\A_T)^{\phi_T(\mathtt{1})}$$
 is an equivalence.
In particular, $1_{\mathtt{1}} : {\mathtt{1}} \to {\mathtt{1}}$ is a Galois grouplike element \emph{(}w.r.t. $\bT$\emph{)}.
\end{proposition}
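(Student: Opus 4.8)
The plan is to leverage Lemma \ref{term.} to collapse the general machinery of Section 3 to the trivial case. Since $\bT$ preserves the terminal object $\mathtt{1}$, Lemma \ref{term.} tells us that the canonical inclusion $i:T^{1_\mathtt{1}} \to T$ is an isomorphism. This is the crucial input: it means the monad $\bT^{1_\mathtt{1}}$ is (isomorphic to) $\bT$ itself, so that the functor $\phi_{T^{1_\mathtt{1}}}$ is (isomorphic to) $\phi_T$, and the adjoint $i_*:\A_{T^{1_\mathtt{1}}} \to \A_T$ is (isomorphic to) the identity functor on $\A_T$. Inspecting Diagram (\ref{D.3}), the comparison functor $\overline{i}$ then becomes (isomorphic to) the composite $\A_T \xrightarrow{\mathrm{id}} \A_T \to (\A_T)^{\phi_T(\mathtt{1})}$, i.e. $\overline{i}$ is essentially the comparison functor $K_{1_\mathtt{1},\mathtt{1}}$ read over the identity on $\A_T$.

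First I would record the identification $i_* \simeq \mathrm{id}_{\A_T}$ and note that, under this identification, $\overline{i}$ coincides with the functor sending $(a,h_a) \in \A_T$ to the object of $(\A_T)^{\phi_T(\mathtt{1})}$ determined by the $\phi_T(\mathtt{1})$-comodule structure. Next I would show directly that this functor is an equivalence. Modulo the isomorphism of categories $(\A_T)^{\phi_T(\mathtt{1})} \simeq (\A_T \downarrow \phi_T(\mathtt{1}))$ recorded in \ref{G.E} and \ref{cart.}, the comodule structure on an object $(a,h_a)$ over the cartesian comonoid $\phi_T(\mathtt{1})$ is governed by the comonad morphism $\mathbb{H}^r_{-,\mathtt{1}}$; but by Remark \ref{R1}, when $T(\mathtt{1}) \simeq \mathtt{1}$ the component $<\!h_a, T(!_a)\!>$ can be identified with $h_a$ itself. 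Thus the comodule data carries no additional information, and $\overline{i}$ is (isomorphic to) an isomorphism of categories $\A_T \to (\A_T \downarrow \mathtt{1})$, since the comma category over the terminal object of the cartesian category $\A_T$ is (isomorphic to) $\A_T$.

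For the final assertion, I would invoke Theorem \ref{Gal-Entw}. Having shown that $\overline{i}$ is an equivalence, that theorem gives immediately that $1_\mathtt{1}:\mathtt{1} \to \mathtt{1}$ is a Galois grouplike element (and that $i_*$ is comonadic, which here is automatic since $i_*$ is an equivalence). Alternatively, since $\widetilde{G_{T(\mathtt{1})}}$ and $\widehat{G_{T(\mathtt{1})}}$ agree up to the isomorphism $S_{\overline{i}}$ precisely when $\overline{i}$ is an equivalence, the isomorphism of $S_{\overline{i}}$ is exactly the definition of $1_\mathtt{1}$ being Galois.

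The main obstacle I anticipate is the bookkeeping around the several layers of "isomorphic to" identifications: one must be careful that the isomorphism $i:T^{1_\mathtt{1}} \to T$ transports cleanly through the adjunction $i^* \dashv i_*$ to yield $i_* \simeq \mathrm{id}$, and that this identification is compatible with the comonad $\widetilde{G_{T(\mathtt{1})}}$ generated by $i^* \dashv i_*$ so that $S_{\overline{i}}$ is genuinely an isomorphism. The actual content is light, but verifying that the comma-category description and the Remark \ref{R1} simplification are correctly matched to the $\phi_T(\mathtt{1})$-comodule structures requires attention rather than ingenuity.
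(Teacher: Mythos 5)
Your proof is correct and follows essentially the same route as the paper's: Lemma \ref{term.} collapses $T^{1_\mathtt{1}}$ to $T$, Diagram (\ref{D.3}) then identifies $\overline{i}$ with the canonical functor into the comma category over $\phi_T(\mathtt{1})$, which is an isomorphism of categories because $\phi_T(\mathtt{1})$ is terminal in $\A_T$, and the Galois assertion follows from Theorem \ref{Main} (equivalently \ref{Gal-Entw}). The detour through Remark \ref{R1} and $\mathbb{H}^r_{-,\mathtt{1}}$ is harmless but not needed; the operative fact is simply the terminality of $\phi_T(\mathtt{1})$.
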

\begin{proof} Since $T(\mathtt{1})\simeq \mathtt{1}$, the monads $\textbf{T}^{1_\mathtt{1}}$ and $\bT$ are isomorphic
by Lemma \ref{term.}. It then follows from commutativity of Diagram \ref{D.3} that $\overline{i}$ is just the functor
$(\phi_T(\mathtt{1}))^*:\A_T \to (\A_T)\downarrow \phi_T(\mathtt{1}).$ But since $T(\mathtt{1})\simeq \mathtt{1}$, $\phi_T(\mathtt{1})$
is a terminal object in $\A_T$. Thus the functor $(\phi_T(\mathtt{1}))^*$ (and hence also $\overline{i}$) is an isomorphism of categories. 

Now the last assertion follows from Theorem \ref{Main}.
\end{proof}
 
Recall that a monad $\textbf{T}=(T,m,e)$ on a category $\A$ is said to be \emph{idempotent} if
the multiplication $m: TT \to T$ is a natural isomorphism.

\begin{proposition} \label{idem.} Let $(\A, \times, \mathtt{1})$ be a cartesian monoidal category. Any idempotent monad on $\A$, whose functor-part
preserves the terminal object ${\mathtt{1}}$, is right pre-Hopf.
\end{proposition}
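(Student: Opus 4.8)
The plan is to unwind the definition of the right fusion operator $H^r_{V,\mathtt{1}}$ directly in the cartesian setting and to recognise it, up to the canonical isomorphism $T(V)\times T(\mathtt{1})\simeq T(V)$, as the multiplication $m_V$. First I would use the canonical opmonoidal structure described in \ref{cart.} to rewrite $\chi_{T(V),\mathtt{1}}$ as the morphism $\langle 1_{TT(V)},\, T(!_{T(V)})\rangle : TT(V)\to TT(V)\times T(\mathtt{1})$. Composing with $m_V\times T(\mathtt{1})$ and using bifunctoriality of the product, the operator
$$H^r_{V,\mathtt{1}}=(m_V\times T(\mathtt{1}))\cdot \chi_{T(V),\mathtt{1}}$$
becomes the morphism $\langle m_V,\, T(!_{T(V)})\rangle : TT(V)\to T(V)\times T(\mathtt{1})$.

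Next I would invoke the hypothesis that the functor $T$ preserves the terminal object, so that $T(\mathtt{1})\simeq\mathtt{1}$ and, by Remark \ref{R1}, $T(!_{T(V)})$ coincides with the unique morphism $!_{TT(V)}:TT(V)\to T(\mathtt{1})\simeq \mathtt{1}$. Consequently $T(V)\times T(\mathtt{1})\simeq T(V)\times\mathtt{1}\simeq T(V)$ via the projection $p_1$, and since $p_1\cdot \langle m_V,\, !_{TT(V)}\rangle = m_V$, the fusion operator $H^r_{V,\mathtt{1}}$ is identified with $m_V:TT(V)\to T(V)$. In particular $H^r_{V,\mathtt{1}}$ is an isomorphism if and only if $m_V$ is.

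Finally, since $\bT$ is idempotent, the multiplication $m:TT\to T$ is a natural isomorphism, so $m_V$ is an isomorphism for every $V\in\A$. Hence $H^r_{V,\mathtt{1}}$ is an isomorphism for every $V$, which is exactly the statement that $\bT$ is right pre-Hopf.

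As for obstacles, the argument is essentially a bookkeeping computation, so I expect no genuine difficulty; the only point requiring care is the identification $T(!_{T(V)})=\,!_{TT(V)}$ and the resulting collapse of $T(V)\times T(\mathtt{1})$ onto $T(V)$, which rests on combining the terminal-object hypothesis with Remark \ref{R1}. Once that identification is in place, idempotency delivers the conclusion immediately.
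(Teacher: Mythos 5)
Your proof is correct and follows essentially the same route as the paper's: both identify the right fusion operator at $\mathtt{1}$ with a module structure map once $T(\mathtt{1})\simeq\mathtt{1}$ collapses the second factor of the product, and then invoke idempotency. The only difference is that you verify the definition of right pre-Hopf directly on the free modules, so the relevant structure map is $m_V$ (an isomorphism by the very definition of idempotency), whereas the paper applies Remark \ref{R1} to an arbitrary $\bT$-module $(a,h_a)$ and uses the well-known fact that idempotency forces every $h_a$ to be invertible --- a negligible variation.
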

\begin{proof}It is well-known that if $\textbf{T}=(T,m,e)$ is an idempotent monad on a category $\A$, then for any $(a, h_a)\in \A_T$,
the morphism $h_a : T(a) \to a$ is an isomorphism. Thus the result follows from Remark \ref{R1}.
\end{proof}

\begin{thm}\label{ex.2}{\bf Example.} \em Recall \cite{Mc} that a category $\A$ with all finite products is
called \emph{cartesian closed} if for each object $a\in \A$, the functor 
$$ a \times - :
\A \to \A$$ has a right adjoint $$ (-)^a : \A \to \A.$$

It is well known that 
the endofunctor can be made a monad $T_a=(-)^a$ with multiplication and unit
$$m_x=x^{\Delta_a} :T_aT_a(x)=(x^a)^a\simeq x^{a \times a} \to T_a(x)=x^a,$$
$$e_x=x^{!_a} : x \to x^a=T_a(x).$$

Let $\A$ be a cartesian closed category such that the terminal object $\mathtt{1}$ has a nontrivial
proper subobject $u\rightarrowtail \mathtt{1}$ (for example, let $\A$ be the category of set-valued
sheaves on a nontrivial topological space). Since $u \times u \simeq u$, the diagonal $\Delta_u: u \to u\times u$
is an isomorphism, whence the monad $T_u$ is idempotent. Since $\mathtt{1}^u=\mathtt{1}$, the functor $(-)^u$ preserves
the terminal object and it follows from Proposition \ref{idem.} that the opmonoidal monad $T_U$ is right pre-Hopf. 

Note that by Proposition \ref{cart.mon.}, the comparison functor
$K_{1_{\mathtt{1}},\mathtt{1}}: \bV \to (\V_{\emph{T}_u})^{^{\phi_{\emph{T}_u}(\mathtt{1})}}$ is not an
equivalence of categories. Thus $T_u$ is an example of an opmonoidal monad which is right pre-Hopf, but the
corresponding comparison functor $K_{1_{\mathtt{1}},\mathtt{1}}$ is not an equivalence of categories.
\end{thm}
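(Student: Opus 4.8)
The plan is to present $T_u=(-)^u$ as the desired counterexample by verifying separately that it is right pre-Hopf and that its comparison functor $K_{1_\mathtt{1},\mathtt{1}}$ fails to be an equivalence, reducing each half to a result already established for cartesian monoidal categories. First I would note that no opmonoidal structure need be constructed by hand: by \ref{cart.} every monad on a category with finite products carries the canonical opmonoidal structure, so $T_u$ is automatically an opmonoidal monad. To apply Proposition \ref{idem.} I would then check its two hypotheses. The key observation is that a subobject $u\rightarrowtail\mathtt{1}$ of the terminal object is subterminal, and for such $u$ the diagonal $\Delta_u:u\to u\times u$ is an isomorphism (equivalently, either projection $u\times u\to u$ is invertible); hence the multiplication $m_x=x^{\Delta_u}$ is a natural isomorphism and $T_u$ is idempotent. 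Moreover $\mathtt{1}^u\simeq\mathtt{1}$, since $\mathrm{Hom}(z,\mathtt{1}^u)\simeq\mathrm{Hom}(z\times u,\mathtt{1})$ is a singleton for every $z$, so $T_u$ preserves the terminal object. Proposition \ref{idem.} then delivers that $T_u$ is right pre-Hopf.

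For the negative half I would exploit the factorisation supplied by Diagram (\ref{D.3}), namely $K_{1_\mathtt{1},\mathtt{1}}=\overline{i}\circ\phi_{T^{1_\mathtt{1}}}$. Since $T_u$ preserves $\mathtt{1}$, Lemma \ref{term.} identifies the canonical inclusion $i:T^{1_\mathtt{1}}\to T_u$ as an isomorphism, so $\phi_{T^{1_\mathtt{1}}}$ may be replaced by $\phi_{T_u}$, while Proposition \ref{cart.mon.} shows that $\overline{i}$ is an isomorphism of categories. Consequently $K_{1_\mathtt{1},\mathtt{1}}$ is an equivalence exactly when $\phi_{T_u}:\A\to\A_{T_u}$ is one. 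For an idempotent monad, however, $\phi_{T_u}$ is the reflector onto the reflective subcategory $\A_{T_u}$, and it is an equivalence only if its unit $e_x=x^{!_u}$ is invertible for every $x$. Using the adjunction isomorphism $\mathrm{Hom}(z,x^u)\simeq\mathrm{Hom}(z\times u,x)$ and setting $z=\mathtt{1}$, this would force $!_u:u\to\mathtt{1}$ to be invertible, contradicting the hypothesis that $u$ is a proper subobject. Hence $\phi_{T_u}$, and with it $K_{1_\mathtt{1},\mathtt{1}}$, is not an equivalence.

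The conceptual crux of the example, and the point I expect to require the most care, is separating the two functors $\overline{i}$ and $K_{1_\mathtt{1},\mathtt{1}}$. In this cartesian setting $\overline{i}$ is always an (iso)equivalence by Proposition \ref{cart.mon.}, whereas $K_{1_\mathtt{1},\mathtt{1}}$ carries the additional factor $\phi_{T^{1_\mathtt{1}}}$, which degenerates to an equivalence only when the idempotent monad $T^{1_\mathtt{1}}\simeq T_u$ is trivial, i.e.\ $T_u\simeq\mathrm{Id}$. Isolating this nontriviality is precisely where the properness of $u\rightarrowtail\mathtt{1}$ enters, and it is the step that converts the abstract factorisation into the explicit failure of $K_{1_\mathtt{1},\mathtt{1}}$ to be an equivalence.
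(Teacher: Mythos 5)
Your proposal is correct and follows essentially the same route as the paper: right pre-Hopfness of $T_u$ via Proposition \ref{idem.} (using that $\Delta_u$ is an isomorphism and $\mathtt{1}^u\simeq\mathtt{1}$), and failure of $K_{1_\mathtt{1},\mathtt{1}}$ via the factorisation $K_{1_\mathtt{1},\mathtt{1}}=\overline{i}\circ\phi_{T^{1_\mathtt{1}}}$ from Diagram (\ref{D.3}), Lemma \ref{term.}, and Proposition \ref{cart.mon.}. Your Yoneda argument showing that the unit $e_x=x^{!_u}$ of the nontrivial idempotent monad $T_u$ cannot be invertible (since that would force $!_u$ to be an isomorphism) is precisely the detail the paper leaves implicit in its terse appeal to Proposition \ref{cart.mon.}.
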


\begin{thm}\label{ex.3}{\bf Example.} \em Recall that the \emph{covariant power set functor} $\mathcal{P} : \text{Set} \to \text{Set}$ is defined by
$$\mathcal{P}(X)=\text{Sub}(X),\quad \mathcal{P}(f:X \to Y)=\mathcal{P}(X) \xrightarrow{\mathcal{P}(f)}\mathcal{P}(Y), $$
where $\text{Sub}(X)$ is the set of all subsets of $X$ and for each $U \in \text{Sub}(X)$, $\mathcal{P}(f)(U)$ is the 
image $f(U)$ of $U$ under $f$. $\mathcal{P}$ is actually the functor part of a monad $(\mathcal{P},e,m)$ 
with 
\begin{center}
$e_X: X \to \mathcal{P}(X)$ the singleton map, $e_X: x \to \{x\}$, and  \\
$m_X: \mathcal{P}\mathcal{P}(X)\to \mathcal{P}(X)$
 the union, $m_X(\{X_\alpha\})=\bigcup_\alpha X_\alpha.$ 
\end{center}

It is well-known that the Eilenberg-Moore category of
$\mathcal{P}$-algebras is isomorphic to the category $\textbf{CSLat}$ of \emph{complete (join-)semilattices}. Recall that the category
$\textbf{CSLat}$ has as its objects partially ordered sets $(X, \leq)$ which admit arbitrary suprema, and as its morphisms 
$f:X \to Y$ maps which preserve suprema. We write $\mathtt{2}$ for the two-element semilattice $\phi_{\mathcal{P}}(\mathtt{1})=\{0\leq 1\}$. 

It is not hard to check that $\mathcal{P}^{1_\mathtt{1}}$ is just the \emph{proper power set functor} $\mathcal{P}^+$, where 
$\mathcal{P}^+(X)=\mathcal{P}(X)\setminus \{\varnothing\}$. It is also well-known (see, for example, \cite[Problem 1.3.3.]{H}) that
the Eilenberg-Moore category of
$\mathcal{P}^+$-algebras is isomorphic to the category $\textbf{ACSLat}$ of \emph{almost complete (join-)semilattices}, 
i.e. partially ordered sets $(X, \leq)$ such that the suprema of all non-empty subsets of X exists. 
Morphisms $f : (X, \leq) \to (Y, \leq)$ of $\textbf{ACSLat}$ are non-empty suprema preserving maps.

Writing $i:\mathcal{P}^+ \to \mathcal{P}$ for the canonical inclusion, it is not hard to see that the functor
$$i^* :\text{Set}_\mathcal{P}=\textbf{CSLat} \to  \text{Set}_{\mathcal{P}^+}=\textbf{ACSLat}$$
 just forgets about
the bottom element, while 
$$i_*:\textbf{ACSLat} \to \textbf{CSLat}$$
 takes an object $X \in \textbf{ACSLat}$ to
the complete semilattice  $\overline{X}$ obtained from $X$ by adding a bottom element $0_X$. It then follows in particular
that the endofunctor $i_*i^*:\textbf{CSLat} \to \textbf{CSLat}$ takes a complete semilattice $X$ to the complete
semilattice $\overline{X}$ obtained from $X$ by adding a new bottom element $0_{\overline{X}} < 0_X.$ Direct inspection shows that,
for any $X \in \textbf{CSLat}$, the $X$-component of the comonad morphism 
$S_{\overline{i}}:\textbf{G}_i \to \textbf{G}_{\phi_{_\mathcal{P}}(\mathtt{1})}$ is the map $\omega:\overline{X}\to X\times \mathtt{2}$
defined by 
$$\omega(x)=\left\{
\begin{array}{ll}
(x,1)& \text{if} \,\, x\neq o_{\overline{X}}\\
(0_X, 0) & \text{if} \,\, x= o_{\overline{X}} .
\end{array}\right.$$ 
It is clear that $\omega$ is not an isomorphism. Thus $1_{\mathtt{1}} : {\mathtt{1}} \to {\mathtt{1}}$ is 
not a Galois grouplike element (w.r.t. the monad $\mathcal{P}$), and hence by Theorem \ref{Main} the comparison functor
$$\overline{i}:\text{Set}_{\mathcal{P}^+}=\textbf{ACSLat} \to (\text{Set}_\mathcal{P}\downarrow \phi_{_\mathcal{P}}(\mathtt{1}))
=(\textbf{CSLat}\downarrow \mathtt{2}),$$ which sends an object $X \in \textbf{ACSLat}$ to 
$(\omega: \overline{X}\to \mathtt{2}) \in (\textbf{CSLat}\downarrow \mathtt{2})$ with 
$$\omega(x)=\left\{
\begin{array}{ll}
1& \text{if} \,\, x\neq o_{\overline{X}}\\
0 & \text{if} \,\, x= o_{\overline{X}} ,
\end{array}\right.
$$ is not an equivalence of categories. According to \cite[1.4]{MW}, $\overline{i}$ admits a right adjoint $r$:
for any $(\omega: \overline{X}\to \mathtt{2}) \in (\textbf{CSLat}\downarrow \mathtt{2})$, $r(\omega)=(\omega)^{-1}(1)$.
It is now easy to see that $r\overline{i} \simeq 1$. Thus $\textbf{ACSLat}$ is a full coreflective subcategory of
$(\textbf{CSLat}\downarrow \mathtt{2})$.

Note finally that $\mathcal{P}^+(\mathtt{1})=\mathtt{1}$. Now it follows from Proposition \ref{cart.mon.} that  
$1_{\mathtt{1}} : {\mathtt{1}} \to {\mathtt{1}}$ is a Galois grouplike element w.r.t. the monad $\mathcal{P}^+$.
\end{thm}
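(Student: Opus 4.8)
The plan is to realise every assertion of the example as an explicit computation in $\textbf{Set}$, $\textbf{CSLat}$ and $\textbf{ACSLat}$, invoking the cartesian description of $T^{1_\mathtt{1}}$ from \ref{cart.}--Lemma \ref{term.}, Theorem \ref{Main} for the failure of the equivalence, and Proposition \ref{cart.mon.} for the positive statement about $\mathcal{P}^+$.

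First I would identify the submonad. By the description of $T^{1_\mathtt{1}}$ for a cartesian monad given before Lemma \ref{term.}, $\mathcal{P}^{1_\mathtt{1}}(X)$ is the equaliser of $\mathcal{P}(!_X)$ and $e_\mathtt{1}\cdot !_{\mathcal{P}(X)}$, both $\mathcal{P}(X)\to\mathcal{P}(\mathtt{1})\cong\mathtt{2}$. As $\mathcal{P}(!_X)$ sends a subset of $X$ to $\varnothing$ precisely when that subset is empty, while $e_\mathtt{1}\cdot!_{\mathcal{P}(X)}$ is constant at the singleton, the equaliser is $\{U\subseteq X:U\neq\varnothing\}$; the singleton-unit and union-multiplication visibly restrict to nonempty subsets, so $\mathcal{P}^{1_\mathtt{1}}=\mathcal{P}^+$.

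Next I would pin down the adjunction. The identifications $\textbf{Set}_\mathcal{P}\cong\textbf{CSLat}$ and $\textbf{Set}_{\mathcal{P}^+}\cong\textbf{ACSLat}$ are the cited classical facts; under them the restriction functor $i^*$ along $i:\mathcal{P}^+\to\mathcal{P}$ keeps the underlying order but only remembers nonempty joins (it forgets the bottom), and its left adjoint $i_*$ freely adjoins a new least element, so the comonad $\widetilde{G}=i_*i^*$ sends $X$ to $\overline{X}=X\cup\{0_{\overline{X}}\}$ with $0_{\overline{X}}<0_X$. The heart of the example is the explicit form of the comonad morphism $S_{\overline{i}}:\widetilde{G}\to\widehat{G}=(-)\times\mathtt{2}$: unwinding Diagram \ref{D.2} (equivalently, reading off the comparison functor $\overline{i}$), its $X$-component $\overline{X}\to X\times\mathtt{2}$ is forced to carry $0_{\overline{X}}$ to the bottom $(0_X,0)$ and each original $x$ to $(x,1)$, and one then checks that this $\omega$ is a complete-semilattice morphism (it preserves nonempty joins and sends the empty join $0_{\overline{X}}$ to the empty join $(0_X,0)$) and a comonad morphism. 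Matching the abstract induced comonad morphism of \ref{G.E} against this hands-on $\omega$, and verifying its join-preservation, is the step I expect to be the main obstacle.

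With $\omega$ available the rest is quick. Since $\omega$ misses every pair $(y,0)$ with $y\neq 0_X$, it is not surjective, so $S_{\overline{i}}$ is not an isomorphism; by the definition of Galois entwining $1_\mathtt{1}$ is therefore not a Galois grouplike element for $\mathcal{P}$, and hence by Theorem \ref{Main} --whose clause (a) now fails-- the comparison functor $\overline{i}:\textbf{ACSLat}\to(\textbf{CSLat}\downarrow\mathtt{2})$ is not an equivalence. For the coreflection I would check that $r(\omega)=\omega^{-1}(1)$ of \cite[1.4]{MW} does land in $\textbf{ACSLat}$ (a preimage of $1$ is closed under nonempty joins) and that $r\,\overline{i}(X)=\overline{X}\setminus\{0_{\overline{X}}\}\cong X$ naturally; as $\overline{i}\dashv r$ with invertible unit, $\overline{i}$ is a fully faithful left adjoint and thus exhibits $\textbf{ACSLat}$ as a full coreflective subcategory of $(\textbf{CSLat}\downarrow\mathtt{2})$. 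Finally $\mathcal{P}^+(\mathtt{1})=\mathcal{P}(\mathtt{1})\setminus\{\varnothing\}\cong\mathtt{1}$, so $\mathcal{P}^+$ preserves the terminal object and Proposition \ref{cart.mon.} shows that the very same grouplike element $1_\mathtt{1}$ is Galois with respect to $\mathcal{P}^+$, which is the contrast the example is after.
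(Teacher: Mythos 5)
Your proposal is correct and follows essentially the same route as the paper: the equaliser description of $T^{1_\mathtt{1}}$ from Section 4 to identify $\mathcal{P}^{1_\mathtt{1}}=\mathcal{P}^+$, the explicit adjunction $i_*\dashv i^*$ between $\textbf{ACSLat}$ and $\textbf{CSLat}$, the hands-on computation of $S_{\overline{i}}$ as $\omega$, Theorem \ref{Main} for the failure of the equivalence, the coreflection via $r(\omega)=\omega^{-1}(1)$, and Proposition \ref{cart.mon.} for the positive statement about $\mathcal{P}^+$ -- all exactly as in the paper, with your extra verifications (that $\omega$ preserves joins, that $\omega^{-1}(1)$ is closed under nonempty suprema) filling in what the paper leaves to ``direct inspection.'' One pedantic point: non-surjectivity of $\omega$ requires $X$ to have an element $y\neq 0_X$, so one should note that a single nontrivial $X$ (e.g. $X=\mathtt{2}$) already prevents $S_{\overline{i}}$ from being a natural isomorphism.
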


\bigskip

\noindent
{\bf Addresses:} \\[+1mm]
{Razmadze Mathematical Institute, 1, M. Aleksidze st., Tbilisi
0193,  } {\small and} \\
 {Tbilisi Centre for Mathematical Sciences,
Chavchavadze Ave. 75, 3/35, Tbilisi 0168}, \\
 Republic of Georgia,
    {\small bachi@rmi.acnet.ge}\\[+1mm]
{Department of Mathematics of HHU, 40225 D\"usseldorf, Germany}, \\
  {\small wisbauer@math.uni-duesseldorf.de}

\end{document}